\documentclass[12pt]{amsart}
\usepackage{amscd,amssymb,graphics}
\usepackage[hidelinks]{hyperref}
\usepackage{amsfonts}
\usepackage{amsthm}
\usepackage{amsmath}
\usepackage{enumerate}
\usepackage{hyperref}
\usepackage{gensymb}
\usepackage{yfonts}
\usepackage{amscd,amssymb}
\input xy
\xyoption{all}

\oddsidemargin 0.1875 in \evensidemargin 0.1875in
\textwidth 5.8 in % Width of text line.
\textheight 220mm \voffset=-4mm

\newtheorem{thm}{Theorem}[section]

\newtheorem{corol}[thm]{Corollary}
\newtheorem{lemma}[thm]{Lemma}

\newtheorem{defi}[thm]{Definition}
\newtheorem{remark}[thm]{Remark}

\theoremstyle{remark}
\newtheorem{example}[thm]{Example}
\newcommand{\quot}[2]{{\raisebox{.2em}{$#1$}\left/\raisebox{-.2em}{$#2$}\right.}}

\newcommand{\ben}{\begin{enumerate}}
	\newcommand{\een}{\end{enumerate}}
\newcommand{\bit}{\begin{itemize}}
	\newcommand{\eit}{\end{itemize}}

\newcommand{\neighbor}[2][ ]{\mathcal{N}_{#1}(#2)}
\DeclareMathOperator{\comp}{Comp}
\DeclareMathOperator{\conn}{Conn}

\ifdefined\FULLPROOFS
	\usepackage{color}
	\newcommand{\fullproof}{\begin{tiny}\color{blue}}
	\newcommand{\finishfullproof}{\end{tiny}}
\else
\let\fullproof=\iffalse
\let\finishfullproof=\fi
\fi

\def\R {{\Bbb R}}

\def\C {{\Bbb C}}
\def\N{{\Bbb N}}

\def\Z {{\Bbb Z}}

\def\F{{\mathbb{F}}}

\def\eps{{\varepsilon}}

\def\QED{\nobreak\quad\ifmmode\roman{Q.E.D.}\else{\rm Q.E.D.}\fi}

\begin{document}

	%1010  the title leaves the free-groups theme without any attention
	%possible title: "Free non-archimedean groups and epimorphisms"
	\title[]{Chinese Remainder Approximation Theorem}
	%{Epimorphisms into non-archimedean groups are dense}
	%080612   The present title is not bad. No good idea yet about possible new title
	%\footnote{Do
	%you think we should somewhat update the title due to the additional
	%material regarding uniform free $\mathbf{NA}$ topological groups?}
	%Authors
	%    Information for first author
	
	\author[]{Matan Komisarchik}
	\address{Department of Mathematics,
		Bar--Ilan University, 52900 Ramat--Gan, Israel}
	%msSep\email{menysh@yahoo.com}
	\email{komisan@macs.biu.ac.il}
	
	%2608
	\date{June 26, 2016}
	
	\keywords{	Chinese Remainder Theorem, 
				Hyperspace uniformity,
				Interpolation,
				Supercomplete spaces,
				Topological ring,
				}

	\subjclass[2010]{	54H13, % Topological fields, rings, etc
						54E15, % Uniform structures and generalizations
						13A18, % Valuations and their generalizations
						13J10, % Complete rings, completion
						30E05  % Moment problems, interpolation problems						
					}
	
	\maketitle	
	\begin{abstract}
		We study a topological generalization of ideal co--maximality in topological rings and present some of its properties, including a generalization of the Chinese remainder theorem. Using the hyperspace uniformity, we prove a stronger version of this theorem concerning infinitely many ideals in supercomplete, pseudo--valuated rings. Finally we prove two interpolation theorems.
	\end{abstract}
	
	\setcounter{tocdepth}{1}
	
	\section{Introduction}	
	This paper studies topological versions of the \emph{Chinese remainder theorem -- CRT} using two main concepts: topological co--maximality and the hyperspace uniformity. After establishing both of these notions, we proceed by proving the \emph{Chinese remainder approximation theorem -- CRAT} (\ref{thm:CRAT}). Our final results will be derived from it.
	
	We begin by introducing the notion of \emph{topological ideal co--maximality -- TCM}, explaining what motivates our definition and presenting several examples. Then we show how some properties of co--maximality remain valid in the topological case. We also obtain a result resembling the second isomorphism theorem for co--maximal ideals in topological rings (\ref{thm:topological_second_iso_theorem}). Finally, we prove a direct extension of the CRT for finite families of ideals (\ref{thm:Finite_CRAT}).
	
	The hyperspace uniformity~\cite[p.~28]{Isbell} is used to study the case of infinite families of ideals. After a brief reminder of the basic definitions, we continue by studying topological co--maximality from the perspective of the hyperspace. From here on, our discussion is restricted to the class of pseudo--valuated rings. We show some approximation properties of those rings which will be used later to prove a strengthened version of the CRAT. Our main example is the ring of analytic functions over a domain in $\C$. This example will be used later to prove a statement about interpolation in infinite amount of points.
	
	We then prove the CRAT for compact families of pairwise TCM ideals in general topological rings. Shortly afterwards, we present a stronger version for supercomplete, pseudo--valuated rings and provide some applications. In particular, we will prove two known interpolation theorems:~\cite[Corollary~9 on p.~366]{Childs} and~\cite[Theorem~15.13\fullproof on pp.~304--305\finishfullproof]{Rudin}.
	
	\textbf{Acknowledgments:}
	I would like to thank Michael Megrelishvili, Menachem Shlossberg, Luie Polev, Tahl Nowik and Shahar Nevo for their valuable suggestions.
	%2608 
	\section{Preliminaries}	
	All topological spaces mentioned below are Hausdorff. 
	For every topological space $X$, we define $\conn(X)$ to be the set of all connected components of $X$. Also, if $A$ is a subset of $X$, then its closure is denoted by $\overline{A}$. The filter of neighborhoods at a given point $x \in X$ will be denoted by $\neighbor[X]
	{x}$ or simply $\neighbor{x}$ when no confusion can arise. Any uniform space $(X, \mu)$ will be denoted by $\mu X$.	If $Y$ is a uniform space, then $C(X,Y)$ is the space of all continuous functions from $X$ to $Y$ with the uniformity of compact convergence.
	\fullproof
~\cite[p. 229]{Kelley}
	\finishfullproof
	
	If $(G,+)$ is an (abelian) topological group 
	and $\eps$ is a neighborhood of the zero element, then $\frac{1}{n}\eps$ is any neighborhood of zero such that 
	\[\underbrace{\frac{1}{n}\eps + \dotsb +\frac{1}{n}\eps}_{\text{n times}} \subseteq \eps.\]
	All the rings contain an identity.
	
	Let $\hat{\C}$ be the Riemann sphere $\C \cup \{\, \infty \,\}$. Given a nonempty open set $\Omega \subset \hat{\C}$, we denote the topological ring of all analytic functions on $\Omega$ with the compact--open topology by $A(\Omega)$. By $\N$ we mean the set of all natural numbers including zero.
	Also, for any integer $n \in \N$, $f^{(n)}$ is the n'th derivative of $f$. More information on analytic functions can be found in~\cite{Rudin}.
	
	\section{Topological Co--Maximality}	
	Two integers are said to be \emph{co--prime} if their only common natural divisor is $1$, or equivalently, if every integer can be written as a sum of their products. Similarly, two ideals $I$ and $J$ of a ring $R$ are said to be \emph{co--maximal} if there is no proper ideal containing them both, or equivalently, if $I + J = R$. We make a 
	natural step generalizing this definition for topological rings. Instead of requiring $I + J$ to contain every element, we just want it to be dense in $R$. 
	\begin{defi}
		Let $R$ be a topological ring. We say that two ideals $I, J \unlhd R$ are \emph{topologically co--maximal (TCM)} and write $I \bot J$ if there is no \emph{closed} proper ideal containing them both. Equivalently, this can be formulated as: $\overline{I + J} = R$. \\
		We also say that a family of ideals $\mathcal{I}$ is \emph{pairwise TCM} and write $\bot \mathcal{I}$  if any two distinct members are TCM. 
	\end{defi}
	If two ideals are co--maximal then they are topologically co--maximal. The converse is also true when $R$ is discrete, but not in general. 	
	
	We present some examples where topological co--maximality can be easily characterized.
	\begin{example} \label{example:P_addic} 
		Consider $\Z$ with the $p$--adic topology. Two ideals $I,J$ in $\Z$ are TCM if and only if one of them is dense.
	\end{example}
	\begin{proof}
		 ($\Leftarrow$) Suppose that either $I$ or $J$ is dense in $\Z$. Without loss of generality, we can assume that it is $I$. Thus: $\overline{I + J} \supseteq \overline{I} = \Z$.
		 
		 ($\Rightarrow$) We will prove that if neither $I$ nor $J$ is dense then they are not TCM. Write $I = a \Z,\ J = b \Z$. Without loss of generality, we assume that $I$ and $J$ are closed. Note that all the ideals of $\Z$ are co--finite, hence $I$ and $J$ are open. By the definition of the $p$--adic topology, there exist $n, m \in \N$ such that $p^{n} \Z \subseteq I,\ p^{m} \Z \subseteq J$ meaning that $a \mid p ^{n}$ and $b \mid p^{m}$. Since $p$ is prime, there exist $n' \leq n, m' \leq m$ such that $a = p^{n'}, b = p^{m'}$. Because neither $I$ nor $J$ is dense, $n', m' \geq 1$. Thus,
		 \[
		   \overline{I + J} = 
		   \overline{ p^{n'} \Z + p^{m'}\Z} =
		   \overline{p^{\min(n', m')}\Z} = 
		   p^{\min(n', m')}\Z \subseteq
		   p \Z \subset
		   \Z. \qedhere
		 \]
	\end{proof}
	\begin{example}
		Let $R$ be a topological ring and suppose that $I$ and $J$ are ideals of $R$ such that $I$ is compact and $J$ is closed. In that case, $I$ and $J$ are topologically co--maximal if and only if they are co--maximal.
	\end{example}
	\begin{proof}
		By~\cite[Theorem~1.13\fullproof on p.~4\finishfullproof]{Ursul}, $I + J$ is closed and therefore it is dense if and only if it is equal to $R$ itself.
	\end{proof}
	\begin{example}
		In Banach algebras with identity, co--maximality of ideals is equivalent to topological co--maximality.
	\end{example}
	\begin{proof}
		By~\cite[Proposition~II.3 on p.~178]{Naimark}, the closure of any proper ideal in $R$ is also a proper ideal of $R$.
	\end{proof}
	
	Now we present some elementary properties of topological co--maximality. 
	\begin{lemma} \label{lemma:TCM_Properties}
		Let $R$ be a topological ring and let $I,J$ be two--sided ideals. 
		\ben
			\item $I$ and $J$ are TCM if and only if every neighborhood of $1$ contains an element of the form $i + j$ for $i \in I, j \in J$. \label{lemma:TCM_Properties:Identity}
			\item If $I$ and $J$ are TCM, then $IJ + JI$ is dense in $I \cap J$. Therefore, if $I$ and $J$ are also closed then $\overline{IJ + JI} = I \cap J$.\label{lemma:TCM_Properties:Denseness}
			\item If $I \subseteq J$ and $I \bot K$ then $J \bot K$. \label{lemma:TCM_Properties:Monotonicity}
			\item Let $J_{1} ,\dotsc,J_{n} \unlhd R$ be two--sided ideals.
			If $I \bot J_{i}$ for all $1 \leq i \leq n$, then $I \bot (J_{1} \dotsm J_{n})$, and therefore $I \bot (\bigcap_{i = 1}^{n} J_{i})$. \label{lemma:TCM_Properties:Intersection}			
		\een
	\end{lemma}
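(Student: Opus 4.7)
The plan is to establish item (1) first and then use it as the working characterization for the other three items.

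For (1), I would note that because addition and both left and right multiplication by any fixed element of $R$ are continuous, the closure $\overline{I+J}$ of the two-sided ideal $I+J$ is itself a closed two-sided ideal. Consequently $\overline{I+J}=R$ is equivalent to $1 \in \overline{I+J}$, which is exactly the neighborhood formulation.

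Item (2) then follows from a short approximation argument. Given $x \in I \cap J$ and a zero-neighborhood $V$, joint continuity of multiplication at $(x,1)$ supplies a neighborhood $W$ of $1$ with $x(W-1) \subseteq V$. Apply (1) to pick $i+j \in W$ with $i \in I,\ j \in J$, and expand $x(i+j) = xi + xj$; since $x \in J$ we have $xi \in JI$, and since $x \in I$ we have $xj \in IJ$, so $xi + xj \in IJ+JI$ lies within $V$ of $x$. This shows $I \cap J \subseteq \overline{IJ+JI}$, and the reverse inclusion $IJ + JI \subseteq I \cap J$ is clear from two-sidedness. When $I$ and $J$ are closed, so is $I \cap J$, giving the desired equality of closures.

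Item (3) is immediate from monotonicity of closure: $I+K \subseteq J+K$, so $R = \overline{I+K} \subseteq \overline{J+K}$. For item (4), I would induct on $n$, reducing matters to proving that $I \bot J$ and $I \bot K$ imply $I \bot JK$. Given a neighborhood $V$ of $1$, select a neighborhood $W$ of $1$ with $W\cdot W \subseteq V$ by continuity of multiplication at $(1,1)$, then use (1) twice to pick $i_1+j,\ i_2+k \in W$ with $i_1,i_2 \in I$, $j \in J$, $k \in K$. Expanding,
\[
(i_1+j)(i_2+k) \;=\; \bigl(i_1 i_2 + i_1 k + j i_2\bigr) + jk \;\in\; I + JK,
\]
and this product lies in $V$. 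Applying (1) in the reverse direction shows $I \bot JK$. The ``therefore'' clause then follows from $J_1 \cdots J_n \subseteq \bigcap_{i=1}^{n} J_i$ together with item (3).

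I do not expect a serious obstacle; the main point to watch is that the cross-terms in both (2) and (4) genuinely land inside the targeted ideals, which is precisely why the hypothesis asks for \emph{two-sided} ideals and cannot be dropped.
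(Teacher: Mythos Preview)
Your proposal is correct and essentially matches the paper's proof in structure and detail: item (1) via the closure of an ideal being an ideal, item (2) by approximating $1$ and multiplying into $x$, item (3) by monotonicity of closure, and item (4) by induction reducing to the two-ideal case via the product $(i_1+j)(i_2+k)$. The only cosmetic difference is that in (2) the paper multiplies $x$ on the right by $i+j$ rather than on the left, which is immaterial.
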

	\begin{proof}~\\
		\ben
		\item By definition, $I$ and $J$ are TCM if and only if $\overline{I + J} = R$. Because $\overline{I + J}$ is an ideal of $R$, this is equivalent to $1$ being an element of it. Finally, this happens if and only if every neighborhood of $1$ intersects $I + J$, meaning that it contains an element of the form $i + j$ for $i \in I, j \in J$. 
		\item Let $x$ be an element of $I \cap J$ and $\eps$ a neighborhood of zero.
		We need to show that $(x + \eps) \cap (IJ + JI) \neq \phi$. 
		Multiplication is continuous so there exists $\delta \in \neighbor{1}$ such that $\delta x \subseteq x + \eps$.
		By (\ref{lemma:TCM_Properties:Identity}), there exist $i \in I, j \in J$ such that $i + j \in \delta$. Thus, $(i + j) x \in \delta x  \subseteq x + \eps$. Recall that $x \in I \cap J$, so we get:
		\[(i + j) x  = 
		  \underbrace{i x}_{\in IJ} + \underbrace{j x}_{\in JI} \in
		  IJ + JI,
		\] 
		proving that $(i + j) x \in (x + \eps) \cap (IJ + JI) \neq \phi$.
		
		If we also assume that $I$ and $J$ are closed, then so is their intersection. Because $IJ + JI$ is dense in the closed set $I \cap J$, we conclude that \linebreak $\overline{IJ + JI} = I \cap J$.
		\item This follows directly from $\overline{J + K} \supseteq \overline{I + K} = R$.
		\item Ideals are closed under multiplication so it is sufficient to prove for $n = 2$ and the general case will be achieved by induction. 
		Let $\eps$ be a neighborhood of $1$. By (\ref{lemma:TCM_Properties:Identity}), it is enough to prove that $\eps$ contains an element of $I + J_{1} J_{2}$.
		
		There exists a neighborhood $\delta \in \neighbor{1}$ such that $\delta ^ {2} \subseteq \eps$ 
		Because $I \bot J_{1}$ and $I \bot J_{2}$, there exist $i_{1},i_{2} \in I, j_{1} \in J_{1}, j_{2} \in J_{2}$
		such that $i_{1} + j_{1}, i_{2} + j_{2} \in \delta$. Notice that
		\[(i_{1} + j_{1}) (i_{2} + j_{2}) 
		= \underbrace{i_{1} i_{2}}_{\in I} + \underbrace{i_{1} j_{2}}_{\in I} + \underbrace{j_{1} i_{2}}_{\in I} + \underbrace{j_{1} j_{2}}_{\in J_{1} J_{2}} \in I + J_{1} J_{2}.\]
		Moreover
		\[(i_{1} + j_{1}) (i_{2} + j_{2}) \in \delta^{2} \subseteq \eps.\]
		Hence
		\[(i_{1} + j_{1}) (i_{2} + j_{2}) \in (I + J_{1} J_{2}) \cap \eps \neq \phi.\]
		\een
	\end{proof}
	
	\begin{lemma} \label{lemma:Finite_Intersection_Continuity}
		Let $R$ be a topological ring and let $I_{1}, \dotsc, I_{n} \unlhd R$ be two--sided ideals of $R$.
		If $\{\,I_{k}\,\}_{k = 1}^{n}$ are pairwise co--maximal, then for any $\eps \in \neighbor{0}$ there exists $\delta \in \neighbor{0}$ such that $\bigcap\limits_{k = 1}^{n} (I_{k} + \delta) \subseteq \big( \bigcap\limits_{k = 1}^{n} I_{k} \big) + \eps$.
	\end{lemma}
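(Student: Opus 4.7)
The plan is to mimic the classical proof of the Chinese Remainder Theorem and then absorb the neighborhood error by continuity of multiplication. The key idea is that the classical CRT yields a ``partition of unity'' $e_1, \dotsc, e_n \in R$ with $e_k - 1 \in I_k$, $e_k \in I_l$ for all $l \neq k$, and (after a small adjustment) $e_1 + \dotsb + e_n = 1$. For any $x \in \bigcap_k(I_k + \delta)$ with decompositions $x = i_k + d_k$, the candidate approximant will be $y := \sum_k e_k i_k$: the properties of the $e_k$ force $y \in \bigcap_k I_k$, and the normalization $\sum_k e_k = 1$ rewrites $x - y$ as $\sum_k e_k d_k$, which becomes small once $\delta$ is chosen so that multiplication by each $e_k$ sends $\delta$ into $\tfrac{1}{n}\eps$.

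To carry this out I would first produce the $e_k$: classical pairwise co--maximality is preserved under finite products (an easy induction on the standard $1 = (i + j)(i' + k) = ii' + ik + ji' + jk$ identity), so $I_k$ is co--maximal with $\prod_{l \neq k} I_l$, yielding $e_k \in \prod_{l \neq k} I_l$ with $e_k \equiv 1 \pmod{I_k}$. The element $1 - \sum_k e_k$ lies in every $I_k$, so replacing $e_1$ by $e_1 + \bigl(1 - \sum_k e_k\bigr)$ preserves $e_1 \equiv 1 \pmod{I_1}$ and $e_1 \in I_l$ for $l \neq 1$, while forcing $\sum_k e_k = 1$. Next, by continuity of the finitely many left multiplications $r \mapsto e_k r$ at $0$, I would pick $\delta \in \neighbor{0}$ with $e_k \delta \subseteq \tfrac{1}{n}\eps$ for every $k$. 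Finally, for $x \in \bigcap_k(I_k + \delta)$ with $x = i_k + d_k$, the summand $e_k i_k$ lies in $I_k$ (since $i_k \in I_k$ and $I_k$ is two--sided) and in every $I_l$ with $l \neq k$ (since $e_k \in I_l$); hence $y := \sum_k e_k i_k \in \bigcap_l I_l$, while $x - y = \sum_k e_k(x - i_k) = \sum_k e_k d_k \in \eps$.

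The only subtle point is the normalization $\sum_k e_k = 1$: without it, expanding $x = \bigl(\sum_k e_k\bigr) x$ leaves a residual $\bigl(1 - \sum_k e_k\bigr) x$ which does lie in $\bigcap_k I_k$ but depends on $x$ and cannot be controlled inside the small neighborhood $\eps$. Once the normalization is in place, the remainder of the argument is a routine application of joint continuity of the $n$ fixed multiplication maps $r \mapsto e_k r$.
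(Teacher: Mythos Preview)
Your proof is correct and uses the same core idea as the paper (a ``partition of unity'' coming from co--maximality, followed by continuity of left multiplication by the fixed coefficients), but the organization differs. The paper proceeds by induction on $n$: it handles $n=2$ directly by writing $a+b=1$ with $a\in I_1,\ b\in I_2$, setting $r' = br_1 + ar_2$, and then reduces the general step to the two--ideal case via $\bigl(\bigcap_{k=1}^{n} I_k\bigr)$ versus $I_{n+1}$. Your argument instead builds the full system of idempotent--like elements $e_1,\dotsc,e_n$ at once and normalizes them to $\sum_k e_k = 1$, which lets you treat arbitrary $n$ in one shot without induction. The normalization trick (absorbing $1-\sum_k e_k \in \bigcap_l I_l$ into $e_1$) is exactly the point the paper's induction is implicitly managing; your version makes the dependence on the fixed elements $e_k$ transparent and yields the sharper choice $e_k\delta \subseteq \tfrac{1}{n}\eps$, whereas the paper's inductive bookkeeping costs a few extra $\tfrac{1}{3}\eps$'s per step. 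Both approaches are equivalent in strength; yours is slightly more economical.
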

	\begin{proof}
		We prove our claim via induction on $n$. The case of $n = 1$ is trivial. We prove the case of $n = 2$ separately. The ideals $I_{1}$ and $I_{2}$ are co--maximal so there exist $a \in I_{1}, b \in I_{2}$ such that $a + b = 1$. We pick a symmetric $\delta \in \neighbor{0}$ such that both $\delta$ and $a\delta$ are contained in  $\frac{1}{3} \eps$. Now suppose that $r \in \delta + I_{1}$ and $r \in \delta + I_{2}$. There exist $r_{1} \in I_{1}, r_{2} \in I_{2}$ such that $r - r_{1} \in \delta, r - r_{2} \in \delta$.
		Define 
		\[r' = b r_{1} + a r_{2} \in I_{2} I_{1} + I_{1} I_{2} \subseteq I_{1} \cap I_{2}.\]
		Notice that
		\[r - r' = r - b r_{1} - a r_{2} = r - r_{1} + r_{1} - b r_{1} - a r_{2} \]
		\[= (r - r_{1}) + (1 - b)r_{1} - a r_{2}
		 = (r - r_{1}) + a(r_{1} - r_{2}) \]
		\[= (r - r_{1}) + a(r_{1} - r) + a (r - r_{2}) 
		 \in \delta + a \delta + a \delta \]
		\[ \subseteq \frac{1}{3} \eps + \frac{1}{3} \eps + \frac{1}{3} \eps \subseteq \eps.\]
	    Thus, $r \in I_{1} \cap I_{2} + \eps$ and therefore $(I_{1} + \delta) \cap (I_{2} + \delta) \subseteq I_{1} \cap I_{2} + \eps$.
	    
		Now suppose that our claim is true for some $n \in \N$; we will prove it for $n + 1$.
		Let $I_{1}, \dotsc, I_{n}, I_{n + 1}$ be a family of pairwise co--maximal two--sided ideals and let $\eps$ be a symmetric neighborhood of zero. By the discrete case of Lemma~\ref{lemma:TCM_Properties}.\ref{lemma:TCM_Properties:Intersection}, $I := \bigcap\limits_{k = 1}^{n} I_{k}$ and $I_{n + 1}$ are co--maximal. By the case of $n = 2$, there exists a neighborhood $\delta'$ such that
		\[(I + \delta') \cap (I_{n + 1} + \delta') \subseteq
		 \big(\bigcap\limits_{k = 1}^{n + 1} I_{k}\big) + \eps.\]
		By the induction hypothesis, there exists a neighborhood $\delta''$ such that 
		\[\bigcap\limits_{k = 1}^{n} (I_{k} + \delta'') \subseteq
		 \big(\bigcap\limits_{k = 1}^{n} I_{k}\big) + \delta' = I + \delta'.\]
		By choosing $\delta := \delta' \cap \delta''$, we get the desired result. 
	\end{proof}
	
	Suppose that $R$ is a topological ring and $I$ and $J$ are two--sided ideals of $R$. Consider the homomorphism $\varphi\colon I \to \quot{(I + J)}{J}$ defined by $\varphi(a) := a + J$. By the second isomorphism theorem, the induced homomorphism $\widehat{\varphi}\colon \quot{I}{I \cap J} \to \quot{(I + J)}{J}$ is an isomorphism but generally not a topological isomorphism.
	\begin{thm} \label{thm:topological_second_iso_theorem}
		If $I$ and $J$ are co--maximal, then $\widehat{\varphi}$ is a topological isomorphism.
	\end{thm}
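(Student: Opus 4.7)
The plan is to exhibit an explicit continuous inverse for $\widehat{\varphi}$. Continuity of $\widehat{\varphi}$ itself is essentially free: since $I$ and $J$ are (algebraically) co--maximal we have $I + J = R$, so $(I+J)/J$ is just $R/J$ with its quotient topology, and $\varphi$ factors as the continuous composition $I \hookrightarrow R \twoheadrightarrow R/J$. The quotient definition of the topology on $I/(I\cap J)$ then yields continuity of $\widehat{\varphi}$. The whole content of the theorem is therefore the continuity of $\widehat{\varphi}^{-1}$.

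To produce such an inverse, I would exploit co--maximality to fix once and for all witnesses $a_{0} \in I$ and $b_{0} \in J$ with $a_{0} + b_{0} = 1$, and define $\psi \colon R \to I$ by $\psi(r) = r a_{0}$. Three short observations carry the proof: (i) $\psi$ is continuous, since multiplication by the fixed element $a_{0}$ is continuous in a topological ring; (ii) $\psi$ takes values in $I$, because $I$ is a left ideal (here the two--sidedness hypothesis is first used); (iii) for every $j \in J$ the element $j a_{0}$ lies in $I$ (as $a_{0} \in I$) and in $J$ (as $j \in J$), hence in $I \cap J$. Items (i)--(iii) together ensure that $\psi$ descends to a well--defined continuous homomorphism $\widehat{\psi} \colon R/J \to I/(I \cap J)$.

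It then remains to check that $\widehat{\psi}$ inverts $\widehat{\varphi}$ on both sides. This is immediate from the identities $r - r a_{0} = r b_{0} \in J$ for $r \in R$, and $a - a a_{0} = a b_{0} \in I \cap J$ for $a \in I$ (the latter again invoking two--sidedness of $I$, so that $a b_{0} \in I$). I do not anticipate a genuine obstacle in this argument: once algebraic co--maximality supplies the explicit element $a_{0}$, the inverse is handed to us as multiplication by $a_{0}$, and the only thing requiring care is bookkeeping the two places where the two--sided ideal hypothesis is genuinely used.
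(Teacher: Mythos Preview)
Your argument is correct. Both proofs hinge on the same observation: once co--maximality supplies $a_{0}\in I$, $b_{0}\in J$ with $a_{0}+b_{0}=1$, multiplication by $a_{0}$ does all the work. The paper invokes a general criterion (Warner, Theorem~5.11) reducing the claim to openness of $\varphi\colon I\to (I+J)/J$, and then checks that $\delta + J \subseteq \varphi(\eps\cap I)$ whenever $a\,\delta\subseteq\eps$ --- which is exactly your multiplication map appearing in disguise, phrased as an openness estimate. You instead display the inverse explicitly as the map induced by $r\mapsto r a_{0}$ and verify its continuity directly from continuity of multiplication, bypassing the external criterion. The underlying computation is the same; your packaging is arguably a touch cleaner since the inverse is exhibited rather than left implicit. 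One small correction in your bookkeeping: the inclusion $r a_{0}\in I$ uses only that $I$ is a \emph{left} ideal (since $a_{0}\in I$); the first genuine use of two--sidedness of $I$ is the step $a b_{0}\in I$ for $a\in I$.
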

	\begin{proof}		
		By~\cite[Theorem 5.11\fullproof on p.35\finishfullproof]{Warner}, $\widehat{\varphi}$ is a topological isomorphism if and only if $\varphi$ is continuous and open.
		Clearly, $\varphi$ is continuous being the restriction to $I$ of the quotient map from $R$ to $\quot{R}{J}$. To prove that it is open, let $\eps$ be a symmetric neighborhood of zero. We need to prove that $\varphi(\eps \cap I)$ is a neighborhood of zero in $\quot{R}{J}$. Because $I$ and $J$ are co--maximal, there exist $a \in I, b \in J$ such that $a + b = 1$. There also exists a symmetric neighborhood $\delta \in \neighbor{0}$ such that $a \delta \subseteq \eps$. We will prove that $\delta + J \subseteq \varphi(\eps \cap I)$ thus proving that $\varphi(\eps)$ is a neighborhood of zero.
		Let $x$ be an element of $\delta$. We define $y := a x \in I$. Notice that $y = a x \in a \delta \subseteq \eps$ and therefore $y \in \eps \cap I$. Finally, 
		\[\varphi(y) = \varphi(a x) = a x + J = a x + bx + J = (a + b)x + J = x + J.\]
		This means that $x + J$ is a member of $\varphi(\eps \cap I)$, as required.
	\end{proof}
	
	We finish this section with an extension of the Chinese Remainder Theorem. 
	\begin{thm} \label{thm:Finite_CRAT}
		Let $R$ be a topological ring and let $I_{1}, \dotsc, I_{n} \unlhd R$ be two--sided ideals of $R$. Consider the map $\varphi\colon R \to \prod\limits_{k = 1}^{n} \quot{R}{I_{k}}$ defined by 
		$\varphi(r) := (r + I_{k})_{k = 1}^{n}$. 
		\ben
			\item If $\{\, I_{k} \,\}_{k = 1}^{n}$ are pairwise TCM, then the image of $\varphi$ is dense in $\prod\limits_{k = 1}^{n} \quot{R}{I_{k}}$. \label{thm:Finite_CRAT:denseness}
			\item If $\{\, I_{k} \,\}_{k = 1}^{n}$ are also algebraically pairwise co--maximal, then $\varphi$ is open onto its image.\label{thm:Finite_CRAT:openess}
		\een
	\end{thm}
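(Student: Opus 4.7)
The plan is to mimic the classical algebraic proof of the Chinese Remainder Theorem while keeping careful track of neighborhoods in the topological ring $R$. The map $\varphi$ is a continuous ring homomorphism, and in particular a homomorphism of topological abelian groups, so for both parts it suffices to analyze its behavior at the origin.

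For (\ref{thm:Finite_CRAT:denseness}), I would produce, for each $k$, an approximate orthogonal idempotent $e_k$ lying in $J_k := \bigcap_{j \neq k} I_j$ and close to $1$ modulo $I_k$. By Lemma~\ref{lemma:TCM_Properties}.\ref{lemma:TCM_Properties:Intersection} applied to the pairwise TCM family, we have $I_k \bot J_k$ for every $k$; Lemma~\ref{lemma:TCM_Properties}.\ref{lemma:TCM_Properties:Identity} then allows me to choose, for any prescribed $\eta_k \in \neighbor{1}$, elements $e_k \in J_k$ and $f_k \in I_k$ with $e_k + f_k \in \eta_k$. Given a target $(r_k + I_k)_{k=1}^n$ and a basic neighborhood of it determined by $\varepsilon_k \in \neighbor{0}$, I would first shrink each $\eta_k$, using continuity of left multiplication by the fixed element $r_k$, so that $r_k(\eta_k - 1) \subseteq \varepsilon_k$. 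Setting $r := \sum_{k=1}^n r_k e_k$, the cross terms $r_j e_j$ with $j \neq k$ land in $I_k$ because $e_j \in J_j \subseteq I_k$, while $r_k(e_k - 1) = -r_k f_k + r_k(e_k + f_k - 1) \in I_k + \varepsilon_k$. Adding these contributions gives $r - r_k \in I_k + \varepsilon_k$ for every $k$, so $\varphi(r)$ lies in the prescribed basic neighborhood.

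For (\ref{thm:Finite_CRAT:openess}), I would establish openness at $0$ by applying Lemma~\ref{lemma:Finite_Intersection_Continuity}. Given a symmetric $\varepsilon \in \neighbor{0}$, that lemma furnishes $\delta \in \neighbor{0}$ with $\bigcap_{k=1}^n (I_k + \delta) \subseteq \bigcap_{k=1}^n I_k + \varepsilon$. Each quotient map $R \to \quot{R}{I_k}$ being open, the product $V := \prod_{k=1}^n \quot{(\delta + I_k)}{I_k}$ is an open neighborhood of $0$ in $\prod_{k=1}^n \quot{R}{I_k}$. If $r \in R$ satisfies $\varphi(r) \in V$, then $r \in \bigcap_{k=1}^n (I_k + \delta)$, so we may write $r = e + s$ with $e \in \bigcap_k I_k$ and $s \in \varepsilon$; since $\varphi(e) = 0$, we obtain $\varphi(r) = \varphi(s) \in \varphi(\varepsilon)$. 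Hence $\varphi(\varepsilon) \supseteq \varphi(R) \cap V$, which proves openness onto the image.

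The main obstacle is the neighborhood bookkeeping in part (\ref{thm:Finite_CRAT:denseness}): the cross-terms $r_j e_j$ with $j \neq k$ are handled purely algebraically, but the error arising from $e_k + f_k \neq 1$ must be absorbed into $\varepsilon_k$ only after multiplication by the fixed element $r_k$, which is exactly why $\eta_k$ must be chosen in response to $\varepsilon_k$ and $r_k$. The role of pairwise TCM is to upgrade, via Lemma~\ref{lemma:TCM_Properties}.\ref{lemma:TCM_Properties:Intersection}, to $I_k \bot J_k$, mirroring the way pairwise co--maximality is used in the classical CRT. Part (\ref{thm:Finite_CRAT:openess}) then reduces cleanly to Lemma~\ref{lemma:Finite_Intersection_Continuity} and the openness of the quotient maps.
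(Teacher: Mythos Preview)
Your proof is correct and follows essentially the same route as the paper: in part~(\ref{thm:Finite_CRAT:denseness}) you use Lemma~\ref{lemma:TCM_Properties}.\ref{lemma:TCM_Properties:Intersection} to get $I_k \bot J_k$ and then build an approximate solution as a sum of elements coming from the $J_k$, while part~(\ref{thm:Finite_CRAT:openess}) is identical to the paper's argument via Lemma~\ref{lemma:Finite_Intersection_Continuity}. The only cosmetic difference is that the paper approximates each $r_k$ directly by an element of $I_k + J_k$ (using density of this sum), whereas you approximate $1$ and then multiply by $r_k$; both work equally well.
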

	\begin{proof}~\\
		\ben
		
			\item Let $\eps \in \neighbor[R]{0}$ be a symmetric neighborhood and $r_{1}, \dotsc, r_{n}$ be elements of $R$.
			We need to find $r \in R$ such that for all $1 \leq k \leq n:$ \[r + I_{k} \subseteq r_{k} + I_{k} + \eps.\]
			Notice that $\{\, I_{k} \,\}_{k = 1}^{n}$ are pairwise TCM so for any $1 \leq k \neq l \leq n$ we have $I_{k} \bot I_{l}$. For all $1 \leq k \leq n$, define 
			\[E_{k} := \bigcap \limits_{l \neq k} I_{l}.\]
			By Lemma~\ref{lemma:TCM_Properties}.\ref{lemma:TCM_Properties:Intersection}, we conclude that:
			\[\forall 1 \leq k \leq n: I_{k} \bot E_{k}.\]
			Thus, for any $1 \leq k \leq n$, there exist $a_{k} \in I_{k}, b_{k} \in E_{k}$ such that 
			\[a_{k} + b_{k} \in r_{k} + \eps. \]
			Notice that for any $1 \leq k \neq l \leq n$:
			\[b_{k} + I_{k} = a_{k} + b_{k} + I_{k} \subseteq r_{k} + I_{k} + \eps\]
			\[b_{k} + I_{l} = 0 + I_{l}. \]
			We define $r := \sum\limits_{k = 1}^{n} b_{k}$. As a consequence of the last two equations we have that for any $1 \leq k \leq n$:
			\[r + I_{k} = \sum\limits_{l = 1}^{n} b_{l} + I_{k} 
			= b_{k} + \sum\limits_{l \neq k} b_{l} + I_{k} \in r_{k} + I_{k} + \eps,\]
			as required.
			
			\item The map $\varphi$ is a homomorphism so it is enough to show that the image of any neighborhood of zero is also a neighborhood of zero. Let $\eps \in \neighbor{0}$. By Lemma~\ref{lemma:Finite_Intersection_Continuity}, there exists a neighborhood $\delta$ satisfying 
			\[\bigcap\limits_{k = 1}^{n} (I_{k} + \delta) \subseteq \big( \bigcap\limits_{k = 1}^{n} I_{k} \big) + \eps = \ker \varphi + \eps.\]
			We claim that  $\varphi(R) \cap \prod\limits_{k = 1}^{n} (I_{k} + \delta) \subseteq \varphi(\eps)$ making $\varphi(\eps)$ a neighborhood of zero in $\varphi(R)$. This is easily seen since if $(r + I_{k})_{k = 1}^{n}$ is an element of $\prod\limits_{k = 1}^{n} (I_{k} + \delta)$, then $r$ belongs to $\bigcap\limits_{k = 1}^{n} (I_{k} + \delta)$. By the choice of $\delta$, $r$ is also contained in $\ker \varphi + \eps$ and therefore $\varphi(r) = (r + I_{k})_{k = 1}^{n} \in \varphi(\eps)$, completing this proof.
		\een
	\end{proof}
	
	\section{The hyperspace uniformity} \label{sec:Hyperspace}
	
	So far, we have only discussed the case of finitely many ideals. In this section, we will use the hyperspace uniformity (as described in~\cite[p.~28]{Isbell}) to extend our results to some infinite families of ideals. 
	
	We think of uniformity in terms of entourages like in~\cite[Chapter~6]{Kelley}. If $\mu X$ is a uniform space, $U \in \mu$ is an entourage and $A$ is a subset of $X$, then:
	\[U[A] := \{\, x \in X | \ \exists \ a \in A: (a,x) \in U \,\}. \]
	The \emph{hyperspace} of $X$ (denoted $H(X)$) 
	 is the set of all nonempty, closed subsets in $X$ endowed with the \emph{hyperspace uniformity}: Given an entourage $U \in \mu$, a corresponding entourage $H(U)$ is defined on $H(X)$ as
	\[H(U) := \{\, (A,B) \in H(X) \times H(X) \mid A \subseteq U[B] \text{ and } B \subseteq U[A] \,\}. \]
	The hyperspace uniformity is the one with $\{\, H(U) \,\}_{U \in \mu}$ as a basis.
	
	Let $R$ be a topological ring. In this case, it will be easier for us to consider neighborhoods of zero instead of the entourages they induce. Thus, for any \linebreak $\eps \in \neighbor{0}$ we define
	\[H(\eps) := \{\, (A,B) \in H(X) \times H(X) \mid A \subseteq B + \eps \text{ and } B \subseteq A + \eps \,\}. \]
	
	The hyperspace uniformity is the one with $\{\, H(\eps) \,\}_{\eps \in N(0)}$ as a basis. %(this definition is equivalent to the one shown in the previous paragraph). 
	We will be interested in a particular subset of the hyperspace, namely the set $\mathcal{L}(R)$ of all closed two--sided ideals. Note that $\mathcal{L}(R)$ is a closed subspace of the hyperspace (Lemma~\ref{lemma:Modules_Are_Closed}).	
	\begin{lemma} \label{lemma:Hyperspace_Similarities}
		Suppose that $R$ is a topological ring.
		\ben
			\item If $R$ is metrizable, so is $\mathcal{L}(R)$. If $R$ is also complete, then so is $\mathcal{L}(R)$.
			\item If $R$ is compact, so is $\mathcal{L}(R)$.
		\een
	\end{lemma}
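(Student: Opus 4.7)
The plan is to reduce both parts to two ingredients. The first is the classical fact that the hyperspace $H(R)$ itself inherits metrizability, completeness, and compactness from $R$. The second is Lemma~\ref{lemma:Modules_Are_Closed}, which states that $\mathcal{L}(R)$ sits as a closed subspace of $H(R)$. Once these are in hand, each conclusion follows from the standard observation that a closed subspace of a metrizable (resp.\ complete, compact) uniform space again has the corresponding property.

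For part (1), if $d$ is a translation-invariant metric on $R$ generating its uniformity, truncated via $d \mapsto \min(d,1)$ to be bounded if needed, then the Hausdorff distance
\[d_H(A,B) := \inf\{\, \eps > 0 \,:\, A \subseteq B + V_\eps \text{ and } B \subseteq A + V_\eps \,\},\]
where $V_\eps$ denotes the $d$-ball of radius $\eps$ around $0$, generates the hyperspace uniformity on $H(R)$: the $d_H$-balls correspond exactly to the basic entourages $H(V_\eps)$ from the definition. Restricting $d_H$ to $\mathcal{L}(R)$ supplies a compatible metric, proving metrizability. For completeness I would invoke the classical result in~\cite[p.~28]{Isbell} that the hyperspace of a complete uniform space is complete, and then use that $\mathcal{L}(R)$, being closed in the complete uniform space $H(R)$, is itself complete.

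For part (2), compactness of $H(R)$ when $R$ is compact is Vietoris's theorem; in the uniform formulation it follows by showing total boundedness of $H(R)$ — given a finite $V_\eps$-net $F \subseteq R$, the collection of nonempty subsets of $F$ is a finite $H(V_\eps)$-net in $H(R)$ — and then combining with completeness from part (1). Closedness of $\mathcal{L}(R)$ inside the compact space $H(R)$ yields compactness of $\mathcal{L}(R)$. The only step requiring real care is the identification of the Hausdorff-metric entourages with the basis $\{H(\eps)\}$ and the harmless truncation ensuring $d_H$ is finite on all of $H(R)$; everything else is a concatenation of standard hyperspace facts together with Lemma~\ref{lemma:Modules_Are_Closed}.
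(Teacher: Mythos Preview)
Your approach is essentially the paper's: invoke the classical inheritance results for $H(R)$ and then apply Lemma~\ref{lemma:Modules_Are_Closed} to pass to the closed subspace $\mathcal{L}(R)$. The paper simply cites Isbell's Theorems~48 and~49 for the hyperspace step, where you spell out the Hausdorff metric and the total-boundedness argument.

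One genuine correction, though: your sentence ``the hyperspace of a complete uniform space is complete'' is false in general, and this matters in the paper. A uniform space whose hyperspace is complete is called \emph{supercomplete}, a notion the paper introduces precisely because completeness alone does not suffice. What is true, and what you actually need for part~(1), is that the hyperspace of a complete \emph{metric} space is complete (Isbell, Theorem~48). Since you have already established metrizability of $H(R)$ via the Hausdorff metric, the argument goes through once you restrict the claim to the metric setting; just be careful not to state it for arbitrary complete uniform spaces.
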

	\begin{proof}
		By Theorems 48 and 49 of~\cite[pp.~30--31]{Isbell}, if $R$ is metrizable [and complete] or compact, then so is $H(R)$. Therefore, both claims are a consequence of $\mathcal{L}(R)$ being a closed subspace of $H(R)$ (Lemma~\ref{lemma:Modules_Are_Closed}).
	\end{proof}

	Recall that $\mathcal{L}(R)$ is also a lattice, with the meet and join operations:
	\[\wedge: \mathcal{L}(R) \times \mathcal{L}(R) \rightarrow \mathcal{L}(R); \: \wedge(A,B) = A \cap B\]
	\[\vee: \mathcal{L}(R) \times \mathcal{L}(R) \rightarrow \mathcal{L}(R); \: \vee(A,B) = \overline{A + B}.\]
	\fullproof
	If $A$ and $B$ are closed, then so are $A \cap B$ and $\overline{A + B}$ meaning that both operations are defined. Now, to prove that these operations are really the meet and join, suppose that $C, D \in \mathcal{L}(R)$ such that $C \subseteq A, B \subseteq D$.
	
	By definition, $C \subseteq A, B$ implies $C \subseteq A \cap B := A \wedge B$. Moreover, $A, B \subseteq D$ implies that $A + B \subseteq D$. Since $D$ is closed, we conclude that $A \vee B :=\overline{A + B} \subseteq \overline{D} = D$. This proves the claim.
	
	\finishfullproof	
	It is worth noting that these functions are defined on the entire hyperspace for any topological ring.
	\begin{lemma} \label{lemma:Join_Is_Continuous}
		Let $R$ be a topological ring. The join operation is uniformly continuous on the entire hyperspace $H(R)$.
	\end{lemma}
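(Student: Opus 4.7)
The plan is to verify uniform continuity directly at the level of entourages, using nothing more than continuity of addition in $R$ and a standard closure approximation lemma for topological groups. Given the basic target entourage $H(\eps)$, I would pick, via continuity of addition, a symmetric neighborhood $\delta$ of zero with $\delta + \delta + \delta \subseteq \eps$ and claim that the product entourage $H(\delta) \times H(\delta)$ witnesses uniform continuity of $\vee$.

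Concretely, suppose $(A_1, A_2) \in H(\delta)$ and $(B_1, B_2) \in H(\delta)$, so that $A_1 \subseteq A_2 + \delta$ and $B_1 \subseteq B_2 + \delta$ (together with the symmetric inclusions). Adding yields
\[A_1 + B_1 \subseteq A_2 + B_2 + \delta + \delta.\]
The only non-cosmetic point is passing to closures while staying inside $\overline{A_2+B_2} + \eps$. For this I would invoke the standard fact that in any topological abelian group, $\overline{T} \subseteq T + U$ for every set $T$ and every symmetric neighborhood $U$ of zero (if $x \in \overline{T}$ then $x + U$ meets $T$, so $x \in T - U = T + U$). Applied with $T = A_2 + B_2 + \delta + \delta$ and $U = \delta$, this gives
\[\overline{A_1 + B_1} \subseteq \overline{A_2 + B_2 + \delta + \delta} \subseteq A_2 + B_2 + \delta + \delta + \delta \subseteq \overline{A_2 + B_2} + \eps.\]
The reverse inclusion $\overline{A_2 + B_2} \subseteq \overline{A_1 + B_1} + \eps$ follows by exchanging the roles of the two pairs, and together they place $(\overline{A_1+B_1}, \overline{A_2+B_2})$ in $H(\eps)$, as required.

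There is no serious obstacle here: uniform continuity reduces to a short triangle-style chase through neighborhoods of zero, with the above closure lemma absorbing the one potential pitfall (namely that the raw Minkowski sum $A_2 + B_2 + 2\delta$ need not be closed). It is worth noting that the analogous statement for the meet operation would be genuinely harder and in fact fails in general, since $A_1 \cap B_1$ can be small or even empty while $A_2 \cap B_2$ is large; this asymmetry is why only $\vee$ is singled out as uniformly continuous at this stage.
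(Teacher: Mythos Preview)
Your proof is correct and essentially identical to the paper's own argument: the paper also chooses $\delta$ with $3\delta \subseteq \eps$ (written there as $\delta = \tfrac{1}{3}\eps$), adds the two containments to get $A_1+B_1 \subseteq A_2+B_2+2\delta$, and then uses the same closure fact $\overline{T} \subseteq T + U$ (cited from Warner) to absorb the extra $\delta$. Your added remark contrasting $\vee$ with $\wedge$ is a nice touch but not part of the paper's proof.
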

	\begin{proof}
		Let $\eps \in \neighbor{0}$. We need to find $\delta \in \neighbor{0}$ such that if $(A_{1},A_{2}) \in H(\delta)$ and $(B_{1},B_{2}) \in H(\delta)$, then $(A_{1} \vee B_{1},A_{2} \vee B_{2}) \in H(\eps)$.\\
		Choose $\delta = \frac{1}{3} \eps$. Suppose that
		\ben
			\item $A_{1} \subseteq A_{2} + \delta$ \label{lemma:Join_Is_Continuous:A1_near_A2}
			\item $A_{2} \subseteq A_{1} + \delta$ \label{lemma:Join_Is_Continuous:A2_near_A1}
			\item $B_{1} \subseteq B_{2} + \delta$ \label{lemma:Join_Is_Continuous:B1_near_B2}
			\item $B_{2} \subseteq B_{1} + \delta$. \label{lemma:Join_Is_Continuous:B2_near_B1} 
		\een
		By adding suitable parts we get:
		\[A_{1} + B_{1} \subseteq A_{2} + B_{2} + 2 \delta \]
		\[A_{2} + B_{2} \subseteq A_{1} + B_{1} + 2 \delta. \]
		By~\cite[Theorem~3.3\fullproof on p.~20\finishfullproof]{Warner}, $\overline{C} = \bigcap \{\, C + V \mid V \in N(0) \,\}$ for any subset $C$ of $R$. 
		In particular, $\overline{C} \subseteq C + V$ for any neighborhood $V$ of zero. Therefore,	
		\[\overline{A_{1} + B_{1}} \subseteq \overline{A_{2} + B_{2} + 2 \delta} \subseteq A_{2} + B_{2} + 3 \delta \subseteq A_{2} + B_{2} + \eps\]
		\[\overline{A_{2} + B_{2}} \subseteq \overline{A_{1} + B_{1} + 2 \delta} \subseteq A_{1} + B_{1} + 3 \delta \subseteq A_{1} + B_{1} + \eps.\]
		By definition, $(A_{1} \vee B_{1},A_{2} \vee B_{2}) \in H(\eps)$.
	\end{proof}
	
	\begin{corol} \label{corol:TCM_Is_Closed}
		Let $R$ be a topological ring. Topological co--maximality is a closed relation on $\mathcal{L}(R)$, meaning that $\{\, (A,B) \in \mathcal{L}(R) \times \mathcal{L}(R) \ | \ A \bot B \,\}$ is closed in $\mathcal{L}(R) \times \mathcal{L}(R)$. 
	\end{corol}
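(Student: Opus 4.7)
The plan is to realize the TCM relation as the preimage of a closed singleton under the join map, and then invoke the uniform continuity just established.

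By definition, for $A,B \in \mathcal{L}(R)$ we have $A \bot B$ iff $\overline{A+B} = R$, that is, iff $A \vee B = R$. Hence
\[
\{\,(A,B) \in \mathcal{L}(R) \times \mathcal{L}(R) \mid A \bot B \,\}
= (\vee)^{-1}(\{R\}),
\]
where $\vee \colon \mathcal{L}(R) \times \mathcal{L}(R) \to \mathcal{L}(R)$ is the join on $\mathcal{L}(R)$.

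First I would cite Lemma~\ref{lemma:Join_Is_Continuous} to conclude that $\vee$ is uniformly continuous, and in particular continuous, on the whole hyperspace, so its restriction to $\mathcal{L}(R) \times \mathcal{L}(R)$ is continuous. Next I would observe that the hyperspace uniformity on $H(R)$ is Hausdorff (the ambient ring $R$ is Hausdorff, and two distinct closed sets $A \ne B$ are separated by any $\eps \in \neighbor{0}$ small enough that $A \not\subseteq B + \eps$ or $B \not\subseteq A + \eps$). Consequently $\mathcal{L}(R)$ is Hausdorff and the singleton $\{R\}$ is closed in it. Taking the preimage of this closed set under the continuous map $\vee$ yields a closed set in $\mathcal{L}(R) \times \mathcal{L}(R)$, which is exactly the TCM relation.

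The only step that is not completely formal is the assertion that $\{R\}$ is closed in $\mathcal{L}(R)$, but as indicated above this is immediate from the Hausdorff property of the hyperspace uniformity, which in turn follows from the paper's standing assumption that $R$ is Hausdorff. So I do not anticipate a genuine obstacle; the whole corollary is essentially a one‑line consequence of Lemma~\ref{lemma:Join_Is_Continuous} once the relation is rewritten as $(\vee)^{-1}(\{R\})$.
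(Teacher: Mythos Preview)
Your proposal is correct and follows essentially the same approach as the paper: both arguments rest on the identity $A \bot B \iff A \vee B = R$ together with Lemma~\ref{lemma:Join_Is_Continuous}. The paper phrases the conclusion via nets (if $X_{\alpha} \vee Y_{\alpha} = R$ for all $\alpha$, then by continuity $(\lim X_{\alpha}) \vee (\lim Y_{\alpha}) = R$), while you phrase it as the preimage $(\vee)^{-1}(\{R\})$ of a closed singleton under a continuous map; these are two standard formulations of the same one-line argument.
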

	\begin{proof}
		Let $\{\, X_{\alpha} \,\}_{\alpha \in A}$ and $\{\, Y_{\alpha} \,\}_{\alpha \in A}$ be two converging nets (with respect to the hyperspace uniformity). 
		We need to show that if $X_{\alpha} \bot Y_{\alpha}$ for any $\alpha \in A$, then $(\lim X_{\alpha}) \bot (\lim Y_{\alpha})$.
		Notice that $X_{\alpha} \bot Y_{\alpha}$ if and only if $X_{\alpha} \vee Y_{\alpha} = R$. 
		The join operation is continuous by Lemma~\ref{lemma:Join_Is_Continuous} and therefore
		\[(\lim X_{\alpha}) \vee (\lim Y_{\alpha}) = \lim (X_{\alpha} \vee Y_{\alpha}) = \lim R = R. \]
		Thus $(\lim X_{\alpha}) \bot (\lim Y_{\alpha})$.
	\end{proof}
	
	The following corollary gives a convenient characterization of compact, pairwise TCM families of ideals.
	\begin{corol} \label{corol:Pairwise_TCM_Families}
		Let $R$ be a topological ring. If $\mathcal{I}$ is a family of closed, pairwise TCM ideals of $R$, then it has at most one non isolated point ($R$ itself). 
		In particular, if $\mathcal{I}$ is compact, then for any neighborhood of zero $\eps$, there exists a finite set $\mathcal{F} \subseteq \mathcal{I}$ such that
		\[\forall I \in \mathcal{I} \setminus \mathcal{F}: R = I + \eps.\]
	\end{corol}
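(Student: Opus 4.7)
The plan is to derive both halves of the corollary from the closedness of topological co-maximality established in Corollary~\ref{corol:TCM_Is_Closed}. For the structural claim, I would take a hypothetical non-isolated point $I$ of $\mathcal{I}$ and choose a net $(I_\alpha)_\alpha$ in $\mathcal{I}\setminus\{I\}$ converging to $I$ in the hyperspace uniformity. Pairwise TCM gives $I_\alpha \bot I$ for every $\alpha$; pairing $(I_\alpha)$ with the constant net at $I$, the pair $(I_\alpha, I)$ converges to $(I,I)$ in $\mathcal{L}(R)\times\mathcal{L}(R)$, and Corollary~\ref{corol:TCM_Is_Closed} then forces $I\bot I$. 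Unwinding the definition gives $R = \overline{I+I} = \overline{I} = I$, using that $I$ is closed; hence the only candidate for a non-isolated point of $\mathcal{I}$ is $R$ itself.

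The compactness statement should then follow by identifying the correct basic hyperspace neighborhood of $R$, namely
\[
U := \{\, A \in H(R) \mid R \subseteq A + \eps \,\},
\]
where the second inclusion required by the definition of $H(\eps)$ is automatic at $R$. If $R\notin\mathcal{I}$, every point of $\mathcal{I}$ is isolated by the first part, so $\mathcal{I}$ is compact and discrete, hence finite, and $\mathcal{F}:=\mathcal{I}$ works trivially. If $R\in\mathcal{I}$, then $\mathcal{I}\setminus U$ is closed in the compact space $\mathcal{I}$ and avoids the unique non-isolated point $R$, so every point of $\mathcal{I}\setminus U$ is isolated also in the subspace, making $\mathcal{I}\setminus U$ compact and discrete, hence finite. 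Setting $\mathcal{F}:=\mathcal{I}\setminus U$ then yields the required set: for each $I\in\mathcal{I}\setminus\mathcal{F}$ one has $R\subseteq I+\eps$, and the reverse inclusion is automatic, giving $R = I+\eps$.

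The only subtle step is the initial diagonal argument: one has to recognize that Corollary~\ref{corol:TCM_Is_Closed} applies to the pair $(I_\alpha, I)$ by regarding the second coordinate as a constant net that converges trivially to $I$. Once this observation is in place, the rest is a standard compactness-plus-isolated-points argument, and I do not anticipate further difficulties.
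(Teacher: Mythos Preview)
Your proposal is correct and follows essentially the same route as the paper. For the first part you invoke Corollary~\ref{corol:TCM_Is_Closed} via a net argument, whereas the paper phrases the same step by observing that $\{\,J \mid I\bot J\,\}$ is closed and hence $\mathcal{I}\setminus\{I\}$ is closed in $\mathcal{I}$; for the second part your ``closed-in-compact, discrete-hence-finite'' argument with the basic neighborhood $U=H(\eps)[R]$ is exactly the paper's finite-subcover extraction from the cover $\{\,\{I\}\,\}_{I\neq R}\cup\{H(\eps)[R]\}$, reorganized.
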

	\begin{proof}
		Suppose that $I \neq R$ is some ideal in $\mathcal{I}$. By Corollary~\ref{corol:TCM_Is_Closed}, the set \linebreak $\mathcal{C} := \{\, J \unlhd R \mid I \bot J  \,\}$ is closed in the hyperspace of $R$. Consequently, the set $\mathcal{C} \cap \mathcal{I}$ is a closed subspace of $\mathcal{I}$. Because $\mathcal{I}$ is pairwise TCM, $J$ belongs to $\mathcal{C}$ for any $I \neq J \in \mathcal{I}$. 
		Also, $I$ is not TCM with itself since $I \neq R$. Thus, $\mathcal{C} \cap \mathcal{I} = \mathcal{I} \setminus \{\, I \,\}$ is closed in $\mathcal{I}$ making $I$ an isolated point in $\mathcal{I}$.
		
		Now suppose that $\mathcal{I}$ is also compact. 
		For any $I \in \mathcal{I}$, we define $U_{I} := \{\, I \,\}$ if $I \neq R$ and $U_{I} := H(\eps)[R]$ otherwise. Because any $I \neq R$ is isolated, the set $\{\, U_{I}  \,\}_{I \in \mathcal{I}}$ is an open cover of $\mathcal{I}$. Then there exists a finite subcover, say $U_{I_{1}}, \dotsc, U_{I_{n}}$. If neither of $\{\, I_{k} \,\}_{k=1}^{n}$ is equal to $R$, then $\mathcal{I}$ is finite since
		\[|\mathcal{I}| = 
		  \big|\bigcup\limits_{k = 1}^{n} U_{I_{k}}\big| \leq 
		  \bigcup\limits_{k = 1}^{n} |U_{I_{k}}| =
		  \bigcup\limits_{k = 1}^{n} |\{\, I_{k} \,\}| =
		  n < \infty.\]
		In this case, our claim is trivial. Alternatively, $R = I_{k}$ for some $1 \leq k \leq n$, and therefore any $I \in \mathcal{I} \setminus \{\, I_{l} \,\}_{l \neq k}$ belongs to $H(\eps)[R]$, as required.
		\fullproof
		Note that:
		\[
		I \in H(\eps)[R] \Rightarrow
		R \subseteq I + \eps
		\]
		\finishfullproof
	\end{proof}
	
	\begin{lemma} \label{lemma:pairwise_TCM_closure}
		Let $R$ be a topological ring. If $\mathcal{I}$ is a family of pairwise TCM ideals of $R$, then so is its closure in $\mathcal{L}(R)$.
	\end{lemma}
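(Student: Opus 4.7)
The plan is to take two distinct members $A, B$ of the closure of $\mathcal{I}$ in $\mathcal{L}(R)$, build a single net of pairs in $\mathcal{I} \times \mathcal{I}$ converging to $(A,B)$ whose two coordinates are eventually distinct, and then invoke Corollary~\ref{corol:TCM_Is_Closed} to pass the TCM relation to the limit.

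First I would fix nets $(I_\alpha)_{\alpha \in \Lambda_1}$ and $(J_\beta)_{\beta \in \Lambda_2}$ in $\mathcal{I}$ converging to $A$ and $B$ respectively in the hyperspace uniformity. Re-indexing both on the product directed set $\Lambda_1 \times \Lambda_2$ gives nets $I_{(\alpha,\beta)} := I_\alpha$ and $J_{(\alpha,\beta)} := J_\beta$, so that the single net of pairs $(I_{(\alpha,\beta)}, J_{(\alpha,\beta)})$ converges to $(A,B)$ in $\mathcal{L}(R) \times \mathcal{L}(R)$.

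Next, since $R$ is Hausdorff the hyperspace uniformity on $\mathcal{L}(R)$ is separated, so there exist disjoint open neighborhoods $U$ of $A$ and $V$ of $B$ in $\mathcal{L}(R)$. Eventually $I_{(\alpha,\beta)} \in U$ and $J_{(\alpha,\beta)} \in V$, which forces $I_{(\alpha,\beta)} \neq J_{(\alpha,\beta)}$ on a cofinal set of indices; on this cofinal set the pairwise TCM hypothesis on $\mathcal{I}$ yields $I_{(\alpha,\beta)} \bot J_{(\alpha,\beta)}$. Applying Corollary~\ref{corol:TCM_Is_Closed}, which asserts that TCM is a closed relation on $\mathcal{L}(R) \times \mathcal{L}(R)$, I conclude $A \bot B$, as desired.

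The main obstacle is purely bookkeeping: merging the two separately-indexed approximating nets into a single net of pairs so that Corollary~\ref{corol:TCM_Is_Closed} can be applied, and using separatedness of the hyperspace uniformity to guarantee that the coordinates of that net are eventually \emph{distinct} elements of $\mathcal{I}$, which is what allows ``pairwise TCM'' to be invoked on them. Once both of these are handled, the closedness of TCM established earlier does the rest.
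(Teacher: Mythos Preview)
Your argument is correct and matches the paper's proof almost exactly: choose approximating nets in $\mathcal{I}$ for the two limit ideals, use Hausdorffness of the hyperspace to separate them and force the approximants to be eventually distinct, then invoke Corollary~\ref{corol:TCM_Is_Closed}. The only difference is cosmetic: the paper tacitly indexes both nets on a common directed set from the start, whereas you spell out the product-directed-set reindexing; also, ``eventually'' (residual) would be a sharper word than ``on a cofinal set,'' though either suffices for passing to the closed relation.
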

	\begin{proof}
		Suppose that $I$ and $J$ are two distinct elements in the closure of $\mathcal{I}$. There exist converging nets $\{\, I_{\alpha} \,\}_{\alpha \in A}, \{\, J_{\alpha} \,\}_{\alpha \in A} \subseteq \mathcal{I}$ such that $I_{\alpha} \to I, J_{\alpha} \to J$. 
		There also exist disjoint neighborhoods $U \in \neighbor[\mathcal{I}]{I}, V \in \neighbor[\mathcal{I}]{J}$. By definition, $I_{\alpha}$ and $J_{\alpha}$ are eventually in $U$ and $V$ respectively. Since $U$ and $V$ are disjoint and $\mathcal{I}$ is pairwise TCM, we have that $I_{\alpha}$ and $J_{\alpha}$ are eventually TCM. By Corollary~\ref{corol:TCM_Is_Closed}, $I = \lim\limits_{\alpha \in A} I_{\alpha}$ and $J = \lim\limits_{\alpha_\in A} J_{\alpha}$ are TCM too.
	\end{proof}
	
	Topologizing certain families of ideals is not a new concept. 
	The structure theory of commutative Banach algebras with identities over $\C$ is probably the best known example. If $R$ is a commutative, semisimple Banach algebra with identity over $\C$, then there is a natural compact topology on the set of its maximal ideals $\textswab{m}$. Moreover, $R$ is topologically isomorphic to the ring of continuous functions over $\textswab{m}$~\cite[Theorem~4 on p.~197]{Naimark}.
	
	By~\cite[Proposition~II.4 on p.~178]{Naimark}, any maximal ideal $M$ in a Banach algebra is closed so we could also look at $\textswab{m}$ with the topology induced from the hyperspace. One might expect the two topologies to be similar but they differ significantly.
	\begin{example}	
		Let $K$ be a compact space. Consider the topological ring $R$ of continuous complex valued functions on $K$. If we furnish the set of all maximal ideals of $R$ with the topology induced from the hyperspace, then it is discrete.
	\end{example}
	\begin{proof}
		By~\cite[Example~2 on p.~194]{Naimark}, any maximal ideal of $R$ is of the form 
		\[M_{x} := \{\, f \in R \mid f(x) = 0 \,\}\]
		for some $x \in K$. Let $x_{0} \in K$ be any point of $K$. We will prove that $M_{x_{0}}$ is isolated in $\textswab{m}$.
		Consider the neighborhood $U := \{\, f \in R \mid \forall x \in K: |f(x)| < \frac{1}{2} \,\}$ of zero in $R$.
		Suppose that there exists $x_{0} \neq x_{1} \in K$ such that $(M_{x_{0}}, M_{x_{1}}) \in H(U)$.
		By the Tietze extension theorem~\cite[Theorem~O on p.~242]{Kelley}, 
		there exists a function $f \in R$ such that $f(x_{1}) = 0, f(x_{0}) = 1$. By definition, $f \in M_{x_{1}}$ and since $(M_{x_{0}}, M_{x_{1}}) \in H(U)$, there also exists $g \in M_{x_{0}}$ satisfying $|f(x) - g(x)| < \frac{1}{2}$ for all $x \in K$. In particular, $|1 - g(x_{0})| < \frac{1}{2}$ and therefore $|g(x_{0})| > \frac{1}{2}$. However, $g \in M_{x_{0}}$ hence $g(x_{0}) = 0$. This contradiction proves that $M_{x_{0}}$ is isolated in $\textswab{m}$.
	\end{proof}
	
	\begin{defi}\cite{Isbell}
		A uniform space is \emph{supercomplete} if its hyperspace is complete. 
	\end{defi}
	\begin{example}
		Any complete metric space is supercomplete~\cite[Theorem~48 on p.~30]{Isbell}. 
		Compact spaces are also supercomplete by Lemma~\ref{lemma:Hyperspace_Similarities}.
	\end{example}
	\begin{lemma} \label{lemma:Image_Of_Supercomplete_Ring}
		Let $R$ and $S$ be two topological rings and let $\varphi\colon R \to S$ be a surjective, continuous, open homomorphism. If $R$ is supercomplete then so is $S$.
	\end{lemma}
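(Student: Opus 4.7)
The strategy is to lift a Cauchy net from $H(S)$ to a Cauchy net in $H(R)$, use supercompleteness of $R$ to obtain a limit there, and push the limit back down. Let $\{B_\alpha\}_{\alpha \in \Lambda}$ be a Cauchy net in $H(S)$ and set $A_\alpha := \varphi^{-1}(B_\alpha)$. Continuity of $\varphi$ makes each $A_\alpha$ closed, and surjectivity of $\varphi$ makes it nonempty, so $\{A_\alpha\} \subseteq H(R)$.

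The first step is to show that $\{A_\alpha\}$ is Cauchy in $H(R)$. Given $\eps \in \neighbor[R]{0}$, the openness of $\varphi$ guarantees $\varphi(\eps) \in \neighbor[S]{0}$, so eventually $B_\alpha \subseteq B_\beta + \varphi(\eps)$ and vice versa. The key containment is
\[\varphi^{-1}\bigl(B_\beta + \varphi(\eps)\bigr) \subseteq A_\beta + \eps,\]
which follows by a short chase: writing $\varphi(r) = b + \varphi(e)$ with $b \in B_\beta$ and $e \in \eps$, one has $\varphi(r - e) = b$, hence $r - e \in A_\beta$ and $r \in A_\beta + \eps$. Symmetry then gives Cauchyness of $\{A_\alpha\}$.

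Supercompleteness of $R$ produces a limit $A \in H(R)$, and I take $B := \overline{\varphi(A)} \in H(S)$ as the candidate limit of $\{B_\alpha\}$. To verify $B_\alpha \to B$, fix $\delta \in \neighbor[S]{0}$, choose $\delta'$ with $\delta' + \delta' \subseteq \delta$, and set $\eps := \varphi^{-1}(\delta') \in \neighbor[R]{0}$ (by continuity). Eventually $A_\alpha \subseteq A + \eps$ and $A \subseteq A_\alpha + \eps$. Applying $\varphi$ and using $\varphi(\eps) = \delta'$ (by surjectivity) together with $\varphi(A_\alpha) = B_\alpha$ yields $B_\alpha \subseteq \varphi(A) + \delta' \subseteq B + \delta$ on one side and $\varphi(A) \subseteq B_\alpha + \delta'$ on the other. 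Taking the closure of the latter and absorbing it as in Lemma~\ref{lemma:Join_Is_Continuous} gives $B = \overline{\varphi(A)} \subseteq \overline{B_\alpha + \delta'} \subseteq B_\alpha + \delta' + \delta' \subseteq B_\alpha + \delta$.

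The main subtlety is the closure bookkeeping in this last step: it is what forces the choice of the intermediate $\delta'$ and an appeal to the identity $\overline{C} \subseteq C + V$ for every $V \in \neighbor{0}$. The rest is a direct transfer of uniform data across $\varphi$, with openness used to push neighborhoods of zero forward from $R$ to $S$ and surjectivity used to guarantee $\varphi\bigl(\varphi^{-1}(\delta')\bigr) = \delta'$.
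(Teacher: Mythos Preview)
Your proof is correct and follows essentially the same route as the paper: lift the Cauchy net via $\varphi^{-1}$, use openness to show the lift is Cauchy, invoke supercompleteness of $R$, and push the limit forward. The only difference is cosmetic: the paper states the containment $\varphi^{-1}(Z + \varphi(\eps)) = \varphi^{-1}(Z) + \eps$ as an identity rather than proving it by a chase, and it pushes forward to $\varphi(Y)$ without explicitly taking the closure (your extra $\delta'$ bookkeeping handles this more carefully than the paper does).
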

	\begin{proof}
		Let $\{\, X_{\alpha} \,\}_{\alpha \in A}$ be a Cauchy net of subsets of $S$. Consider $Y_{\alpha} := \varphi^{-1}(X_{\alpha})$. We claim that $\{\, Y_{\alpha} \,\}_{\alpha \in A}$ is also a Cauchy net. Let $\eps$ be a neighborhood of zero in $R$. We need to find $\alpha_{0} \in A$ such that for any $\alpha_{0} \leq \alpha, \beta \in A$: 
		\[ Y_{\alpha} \subseteq Y_{\beta} + \eps \text{ and } Y_{\beta} \subseteq Y_{\alpha} + \eps. \]
		Since $\varphi$ is open, $\varphi(\eps)$ is a neighborhood of zero in $S$. Because $\{\, X_{\alpha} \,\}_{\alpha \in A}$ is a Cauchy net, there exists $\alpha_{0} \in A$ such that for any $\alpha_{0} \leq \alpha, \beta \in A$:
		\[ X_{\alpha} \subseteq X_{\beta} + \varphi(\eps) \text{ and } X_{\beta} \subseteq X_{\alpha} + \varphi(\eps). \]
		Thus,
		\[ \varphi^{-1}(X_{\alpha}) \subseteq \varphi^{-1}(X_{\beta} + \varphi(\eps)) \text{ and } \varphi^{-1}(X_{\beta}) \subseteq \varphi^{-1}(X_{\alpha} + \varphi(\eps)). \]
		Note that for any set $Z \subseteq S$
		\[\varphi^{-1}(Z + \varphi(\eps)) = \varphi^{-1}(Z) + \eps.\]
		Therefore,
		\[ Y_{\alpha} \subseteq Y_{\beta} + \eps \text{ and } Y_{\beta} \subseteq Y_{\alpha} + \eps, \]
		proving our claim. Since $R$ is supercomplete, $\left\{\, Y_{\alpha} \,\right\}_{\alpha \in A}$ converges to some $Y \subseteq R$.
		
		We now claim that $\left\{\, X_{\alpha} \,\right\}_{\alpha \in A}$ converges to $\varphi(Y)$. Let $\theta$ be a neighborhood of zero in $S$. Since $\varphi$ is continuous, $\varphi^{-1}(\theta)$ is a neighborhood of zero in $R$. Because $\left\{\, Y_{\alpha} \,\right\}_{\alpha \in A}$ converges to $Y$, there exists $\alpha_{0}$ such that for any $\alpha_{0} \leq \alpha \in A$:
		\[Y \subseteq Y_{\alpha} + \varphi^{-1}(\theta) \text{ and } 
		  Y_{\alpha} \subseteq Y + \varphi^{-1}(\theta). \]
		Recall that $\varphi$ is surjective so by applying $\varphi$ we get:
		\[
		\varphi(Y) \subseteq X_{\alpha} + \theta \text{ and }
		X_{\alpha} \subseteq \varphi(Y) + \theta. 
		\]
		This proves that $\left\{\, X_{\alpha} \,\right\}_{\alpha \in A}$ converges and therefore $S$ is supercomplete.
	\end{proof}
	
	\subsection{Pseudo--Valuated Rings}
	To prove some of our stronger results, we will need to restrict our discussion to a specific class of topological rings, namely, pseudo-valuated rings as described in~\cite{Arens}. We will shortly see that it is a fairly large class containing most of the examples we will be interested in.
	\begin{defi} \label{def:pseudo_valuated_ring}~\cite{Arens}
		Let $R$ be a ring. A \emph{pseudo--valuation} in $R$ is a nonnegative real--valued function $V$ such that for all $x, y \in R$ we have:
		\ben
		\item $V(x + y) \leq V(x) + V(y)$. \label{def:pseudo_valuated_ring:sub_additive}
		\item $V(x y) \leq V(x) V(y)$. \label{def:pseudo_valuated_ring:sub_multiplicative}
		\item $V(-x) = V(x)$. \label{def:pseudo_valuated_ring:symmetry}
		\item $V(0) = 0$. \label{def:pseudo_valuated_ring:seperation}
		\een
		The ring $R$ together with the family $\mathcal{V}$ of pseudo--valuations is said to be \emph{pseudo--valuated} if $V(x) = 0$ for all $V \in \mathcal{V}$ implies $x = 0$.
		For any $V \in \mathcal{V}$ and $\eps > 0$, the \emph{open $V$--ball of radius $\eps$ around $r_{0}$} is defined as:
		\[B_{V}(r_{0}, \eps) := \left\{\, r \in R \mid V(r - r_{0}) < \eps  \,\right\}. \]
		When $r_{0} = 0$, we will simply write $B_{V}(\eps)$.
	\end{defi}	
	Note that if $(R, \mathcal{V})$ is a pseudo valuated ring, then $\left\{\, B_{V}(\eps) \,\right\}_{\eps > 0, V \in \mathcal{V}}$ is a fundamental system of neighborhoods of zero for a ring topology on $R$.
	
	\begin{defi}~\cite{Naimark}
		A \emph{normed $\F$--algebra} (for $\F = \R, \C$) is an $\F$--algebra $R$ which is also a normed vector space and its norm satisfies $\lvert 1 \rvert = 1$ and $\lvert xy \rvert \leq \lvert x \rvert \lvert y \rvert$ for any $x, y \in R$.
	\end{defi}
	\begin{remark}
		Any valuated ring is clearly pseudo--valuated. In particular, any normed algebra is also pseudo--valuated.
	\end{remark}
	\begin{example} \label{example:Function_are_pseudo_valuated}
		Suppose that $R$ is a pseudo--valuated ring and $T$ is any topological space. The ring $C(T, R)$ of continuous functions from $T$ to $R$ furnished with the compact--open topology is also pseudo--valuated.
	\end{example}
	\begin{proof}
		Denote by $\tau$ the topology of $T$.
		Suppose that $\mathcal{V}$ is the family of pseudo--valuations on $R$. For any pseudo--valuation $V \in \mathcal{V}$ and any compact subset $K \subseteq T$, we define:
		\[ \forall f \in C(T, R): V_{K} (f) := \sup \limits_{x \in K} V(f(x)).\]
		Note that $V \circ f$ is continuous with respect to $\tau$ and $K$ is compact in $T$ so its image is a compact subset of $R$, hence it admits a supremum.
		This is clearly a pseudo--valuation on $C(T, R)$.
		\fullproof
		Axioms (\ref{def:pseudo_valuated_ring:seperation}), (\ref{def:pseudo_valuated_ring:sub_additive}) and (\ref{def:pseudo_valuated_ring:symmetry}) are obvious. To prove (\ref{def:pseudo_valuated_ring:sub_multiplicative}) note that:
		\[
		V_{K}(f g) := 
		\sup\limits_{x \in K} V (f(x) g(x)) \leq 
		\sup\limits_{x \in K} V(f(x)) V(g(x)) \leq
		\left(\sup\limits_{x \in K} V(f(x))\right) \left(\sup\limits_{x \in K} V(g(x))\right) =
		V_{K}(f) V_{K}(g).
		\]
		\finishfullproof		
		Moreover, the family $\mathcal{V}_{T} :=\left\{\, V_{K} \,\right\}_{V \in \mathcal{V}}$, where $K$ runs over all compact subsets of $T$, generates the compact--open topology on $C(T, R)$.
		\fullproof
		First we will see that the compact--open topology is weaker than that generated by $\left\{\, V_{K} \,\right\}_{K \in \comp(T)}$. Suppose that $K$ is a compact subset of $T$ and $U$ is some open neighborhood of zero in $R$. There exists some $V \in \mathcal{V}$ and $\eps > 0$ such that $B_{V}(\eps) \subseteq U$. Thus:
		\[B_{V_{K}}(\eps) \subseteq \left\{\, f \in C(T, R) \mid \forall x \in K: f(x) \in U  \,\right\} \]
		proving that the compact--open topology is indeed weaker.
		Conversely, let $\eps > 0$ and $K \in \comp(T)$. We show that there exists some set $U \subseteq B_{V_{K}}(\eps)$ which is open in the compact--open topology. Define:
		\[
		U := 
		\left\{\,  f \in C(T, R) \mid \forall x \in K: f(x) \in B_{V}(\eps) \,\right\}.
		\]
		Note that for any $f \in U$, $V_{K}(f) := \sup \limits_{x \in K} V(f(x)) \leq \eps$ since $f(x) \in B_{V}(\eps)$. Thus $U \subseteq B_{V_{K}}$, as required.
		\finishfullproof
	\end{proof}
	\begin{lemma}
		Let $R$ be a ring and let $V$ be a pseudo--valuation on $R$. The following inclusions hold for every $\eps, \delta > 0$:
		\ben
			\item $B_{V}(\eps) + B_{V}(\delta) \subseteq B_{V}(\eps + \delta). $
			\item $B_{V}(\eps) B_{V}(\delta) \subseteq B_{V}(\eps \delta). $
		\een
	\end{lemma}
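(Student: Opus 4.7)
The plan is to unfold the definitions and apply the axioms of a pseudo--valuation essentially verbatim; both inclusions are pointwise statements about elements of the balls, so I would just pick arbitrary representatives and estimate $V$ on their sum or product.

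For part (1), I would take $x \in B_V(\eps)$ and $y \in B_V(\delta)$, so that $V(x) < \eps$ and $V(y) < \delta$. Applying the subadditivity axiom \ref{def:pseudo_valuated_ring:sub_additive} of Definition~\ref{def:pseudo_valuated_ring} gives $V(x+y) \leq V(x) + V(y) < \eps + \delta$, which is exactly the statement $x + y \in B_V(\eps + \delta)$.

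For part (2), I would proceed in the same fashion: pick $x \in B_V(\eps)$ and $y \in B_V(\delta)$, so $V(x) < \eps$ and $V(y) < \delta$ with both values nonnegative. The submultiplicativity axiom \ref{def:pseudo_valuated_ring:sub_multiplicative} gives $V(xy) \leq V(x) V(y)$, and the only thing left is the elementary fact that $0 \leq a < \eps$ and $0 \leq b < \delta$ (with $\eps, \delta > 0$) imply $ab < \eps \delta$. This follows by case analysis: if $a = 0$ then $ab = 0 < \eps\delta$, and if $a > 0$ then $ab < \eps b \leq \eps \delta$. Hence $V(xy) < \eps\delta$, so $xy \in B_V(\eps\delta)$.

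There is essentially no obstacle here; the statement is a direct bookkeeping consequence of axioms \ref{def:pseudo_valuated_ring:sub_additive} and \ref{def:pseudo_valuated_ring:sub_multiplicative}. The only pitfall worth noting is the strict inequality in part (2) when one of the valuations vanishes, handled by the short case split above.
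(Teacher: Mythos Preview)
Your proof is correct and follows exactly the same approach as the paper, which simply notes that the inclusions follow directly from axioms~\ref{def:pseudo_valuated_ring:sub_additive} and~\ref{def:pseudo_valuated_ring:sub_multiplicative} of Definition~\ref{def:pseudo_valuated_ring}. Your write--up is in fact more detailed than the paper's one--line proof, in particular the short case split securing the strict inequality in part~(2).
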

	\begin{proof} 
		Directly follows from axioms (\ref{def:pseudo_valuated_ring:sub_additive}) and (\ref{def:pseudo_valuated_ring:sub_multiplicative}) of Definition~\ref{def:pseudo_valuated_ring}.
	\end{proof}
	
	\begin{defi}
		Suppose that $V$ is a pseudo--valuation on the ring $R$ and $\eps > 0$. A subset $A \subseteq R$ is \emph{$(V, \eps)$--dense} in $B \subseteq R$, if $B \subseteq A + B_{V}(\eps)$.
		If $A$ is $(V, \eps)$--dense in $B$ for all $\eps > 0$, then it is said to be \emph{$V$--dense} in $B$.
		If $B = R$, then we will simply say $(V, \eps)$--dense or $V$--dense.
	\end{defi}
	Now we will discuss some approximation properties of pseudo--valuated rings which will be useful for us later.
	\begin{lemma} \label{lemma:pseudo_valuated_ceil_size}
		Let $R$ be a ring and let $V$ be a pseudo--valuation on $R$. If $I$ is a $(V, \delta)$--dense left ideal for some $0 < \delta < 1$, then $I$ is $V$--dense.
	\end{lemma}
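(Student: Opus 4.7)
The plan is to bootstrap the fixed-radius approximation $R = I + B_{V}(\delta)$ by means of a Neumann-series-style expansion of the identity. Since $1 \in R$, we can write $1 = a + b$ with $a \in I$ and $V(b) < \delta$. For each $n \geq 1$ set $s_{n} := 1 + b + b^{2} + \dotsb + b^{n-1}$; the telescoping identity
\[ 1 - b^{n} = s_{n}(1 - b) = s_{n} a \]
then exhibits $1 - b^{n}$ as the product of an element of $R$ with the ideal element $a$, so $1 - b^{n} = s_{n} a \in R \cdot I \subseteq I$ by the left-ideal property.

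Given an arbitrary $r \in R$, the decomposition
\[ r = r(1 - b^{n}) + r b^{n} = (r s_{n}) a + r b^{n} \]
separates $r$ into a principal term lying in $I$ (since $(r s_{n}) a \in R \cdot I \subseteq I$) and a remainder. By axioms (\ref{def:pseudo_valuated_ring:sub_multiplicative}) of Definition~\ref{def:pseudo_valuated_ring} applied iteratively, $V(r b^{n}) \leq V(r) V(b)^{n} < V(r) \delta^{n}$. For any prescribed $\eps > 0$, we choose $n$ large enough that $V(r) \delta^{n} < \eps$, which is possible because $0 < \delta < 1$ (the case $V(r) = 0$ is automatic). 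Then $r b^{n} \in B_{V}(\eps)$ and hence $r \in I + B_{V}(\eps)$.

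Since $r \in R$ and $\eps > 0$ were arbitrary, $R \subseteq I + B_{V}(\eps)$ for every $\eps > 0$, which is exactly $V$-density of $I$. The only subtle point, and the one that forces the particular form of the argument, is keeping track of sides: the factorization $1 - b^{n} = s_{n} a$ must put the ideal element $a$ on the right, so that after left-multiplication by the arbitrary $r$ the absorption property $R \cdot I \subseteq I$ of a \emph{left} ideal can be invoked to place $(r s_{n}) a$ inside $I$.
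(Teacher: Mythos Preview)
Your proof is correct and is essentially the same as the paper's: the paper builds the approximant recursively via $r_{0}=0$, $r_{n+1}=r_{n}+(r-r_{n})a$, which unwinds to $r_{n}=r(1-(1-a)^{n})=r s_{n}a$ with $b=1-a$, exactly your closed-form Neumann partial sum, and the error $r-r_{n}=r(1-a)^{n}=rb^{n}$ is bounded identically. The only difference is presentational---you write the geometric series explicitly, the paper phrases it as an iteration---and your remark about keeping $a$ on the right for the left-ideal absorption is spot on.
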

	\begin{proof}
		Let $r$ be any element of $R$ and $\eps > 0$. We need to find $r' \in I$ such that $V(r - r') < \eps$.
		Since $I$ is $(V, \delta)$--dense, there exists $a \in I$ such that $V(1 - a) < \delta$.
		Let $r_{0} := 0$. Now, we recursively define
		\[r_{n + 1} = r_{n} + (r - r_{n}) a.\]
		By induction, we prove that $V(r - r_{n}) \leq \delta^{n} V(r)$ and that $r_{n} \in I$. 
		For $n = 0$ it is trivial. Now suppose that our claim is true for some $n \in \N$; we will prove it for $n + 1$.
		\[V(r - r_{n + 1}) = V(r - (r_{n} + (r - r_{n})a) )= V( r - r_{n} - (r - r_{n})a)) 
		   = V((r - r_{n})(1 - a)) \]
		\[\leq V(r - r_{n}) V(1 - a) \leq \delta \delta^{n} V(r) = \delta^{n + 1} V(r).\]
		Moreover, $r_{n + 1}$ clearly belongs to $I$ because $r_{n}$ and $a$ do.
		Recall that $\delta < 1$ and therefore, there exists $n_{0} \in \N$ such that $\delta^{n_{0}} < \frac{\eps}{V(r)}$. Now for $r' := r_{n_{0}}$ we get:
		\[ V(r - r') = V(r - r_{n_{0}}) \leq \delta^{n_{0}} V(r) < \frac{\eps}{V(r)} V(r) = \eps,\]
		proving that $I$ is $V$--dense in $R$.
	\end{proof}
	
	%TODO: Should this be a definition or just text
	\begin{defi}
		If $V$ is a pseudo--valuation on a ring $R$, then two ideals are \emph{TCM with respect to $V$} if their sum is $V$--dense in $R$.
	\end{defi}
	Note that two ideals in a pseudo--valuated ring are TCM if and only if they are TCM with respect to any individual pseudo--valuation.
	\begin{lemma} \label{lemma:co_maximal_product}
		Let $R$ be a ring and $V$ be a pseudo--valuation on $R$. Suppose that $I$ and $J$ are two--sided ideals of $R$. If $I$ and $J$ are TCM with respect to $V$ and $J$ is $(V, \delta)$--dense for some $0 < \delta < 1$, then $I \cap J$ is $V$--dense in $I$.
	\end{lemma}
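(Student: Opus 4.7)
The plan is to imitate the iterative approximation of Lemma~\ref{lemma:pseudo_valuated_ceil_size}, but executed inside the ambient ideal $I$ rather than all of $R$. A preparatory step: the hypothesis $0<\delta<1$ together with Lemma~\ref{lemma:pseudo_valuated_ceil_size} already upgrades $J$ from $(V,\delta)$-dense to $V$-dense, so I may fix a single element $c\in J$ satisfying $V(1-c)<\delta$, and use it throughout the construction.

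Given $x\in I$ and $\eps>0$, I will define the sequence
\[
y_{0}:=0,\qquad y_{n+1}:=y_{n}+(x-y_{n})c,
\]
and verify two inductive claims: first, $y_{n}\in I\cap J$ for every $n\geq 0$; second, $V(x-y_{n})\leq\delta^{n}V(x)$. Since $\delta<1$, choosing $n$ large enough that $\delta^{n}V(x)<\eps$ then yields $y:=y_{n}\in I\cap J$ with $V(x-y)<\eps$, which is what we need.

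For the first claim, suppose inductively that $y_{n}\in I\cap J$. The difference $x-y_{n}$ lies in $I$ (both summands do), so $(x-y_{n})c\in I$ because $I$ is a right ideal; moreover $(x-y_{n})c\in J$ because $c\in J$ and $J$ is a left ideal. Hence $(x-y_{n})c\in I\cap J$, and therefore $y_{n+1}=y_{n}+(x-y_{n})c\in I\cap J$. For the second claim, the straightforward identity $x-y_{n+1}=(x-y_{n})(1-c)$ combined with sub-multiplicativity of $V$ and the bound $V(1-c)<\delta$ propagates the estimate from $n$ to $n+1$, starting from $V(x-y_{0})=V(x)$.

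The main subtlety is keeping every iterate in $I\cap J$, which is exactly where the two-sidedness of both ideals is consumed; the remaining estimates are routine once Lemma~\ref{lemma:pseudo_valuated_ceil_size} has been invoked to obtain the approximant $c$. I should note that the TCM hypothesis on $(I,J)$ is not strictly used in the argument: once $J$ is $(V,\delta)$-dense with $\delta<1$, it is already $V$-dense in $R$, and hence so is the larger set $I+J$. The hypothesis is natural in the paper's framework, but the proof proceeds without explicitly invoking it.
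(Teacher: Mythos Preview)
Your argument is correct. It is, however, a genuinely different route from the paper's. The paper uses the TCM hypothesis explicitly: it picks $a\in I$, $b\in J$ with $V(1-(a+b))$ small (this is where $I+J$ being $V$-dense enters), then approximates $x$ by some $x'\in J$ using the $V$-density of $J$, and sets $y:=a x'+b x\in I\cap J$ in a single step, estimating $V(x-y)=V\big((1-(a+b))x+a(x-x')\big)$. Your proof instead reruns the iterative scheme of Lemma~\ref{lemma:pseudo_valuated_ceil_size} inside $I$, using only an element $c\in J$ with $V(1-c)<\delta$; this never invokes TCM, and in fact shows the stronger statement that $(V,\delta)$-density of $J$ alone (for some $\delta<1$) forces $I\cap J$ to be $V$-dense in $I$. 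Your approach is therefore more elementary and reveals that the TCM hypothesis in the lemma is redundant; the paper's approach, while using the extra hypothesis, has the minor advantage of being a one-shot construction, at the cost of the usual nuisance of dividing by $V(x)$ and $V(a)$ (cases $V(x)=0$ or $V(a)=0$ need a word). One small cosmetic point: your ``preparatory step'' citing Lemma~\ref{lemma:pseudo_valuated_ceil_size} is unnecessary, since the existence of $c\in J$ with $V(1-c)<\delta$ follows directly from $(V,\delta)$-density and you never use full $V$-density elsewhere.
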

	\begin{proof}
		Let $x$ be any element of $I$ and $\eps > 0$. We will find $y \in I \cap J$ such that $V(x - y) < \eps$. Since $I$ and $J$ are TCM with respect to $V$,
		there exist $a \in I, b \in J$ such that $V(1 - (a + b)) < \frac{\eps}{2 V(x)}$. By Lemma~\ref{lemma:pseudo_valuated_ceil_size}, $J$ is $V$--dense in $R$ so there exists $x' \in J$ such that $V(x - x') < \frac{\eps}{2V(a)}$. We define $y = a x' + b x \in I \cap J$.
		Notice that:
		\[ V(x - y) =
		   V(x - (a x' + bx)) = 
		   V(x - b x - a x + a x - a x') = \]
		\[ V((1 - (b + a))x + a (x - x')) \leq
		   V(1 - (b + a)) V(x) + V(a) V(x - x') \leq\]
		\[ \frac{\eps}{2 V(x)} V(x) + V(a) \frac{\eps}{2V(a)} =
		   \eps.\]
		   
		This concludes the proof.
	\end{proof}
	\begin{corol} \label{corol:co_maximal_product}
		Let $R$ be a topological ring and let $V$ be a pseudo--valuation on $R$. Suppose that $I_{1}, \dotsc, I_{n}$ are two--sided ideals of $R$, pairwise TCM with respect to $V$. If they are also $(V, \delta)$--dense, for some $0 < \delta < 1$, then $\bigcap_{i = k}^{n} I_{k}$ is $V$--dense.
	\end{corol}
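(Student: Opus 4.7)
The approach is induction on $n$. The base case $n = 1$ is immediate: the hypothesis is that $I_1$ is $(V, \delta)$--dense, and Lemma~\ref{lemma:pseudo_valuated_ceil_size} promotes this directly to $V$--density.

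For the inductive step, suppose the claim is known for $n$ and let $I_1, \dots, I_{n+1}$ satisfy the hypotheses. The plan is to split off $I_{n+1}$ from $J := \bigcap_{k=1}^{n} I_k$ and reduce to the two--ideal situation covered by Lemma~\ref{lemma:co_maximal_product}. Three ingredients go into the pair $(J, I_{n+1})$. First, the induction hypothesis yields that $J$ is $V$--dense in $R$. Second, $I_{n+1}$ is TCM with respect to $V$ with each $I_k$ for $k \le n$; applying Lemma~\ref{lemma:TCM_Properties}.\ref{lemma:TCM_Properties:Intersection} in the ring topology induced by the single pseudo--valuation $V$ (in which ``TCM with respect to $V$'' coincides with topological co--maximality) upgrades this to $I_{n+1} \bot J$ with respect to $V$. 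Third, $I_{n+1}$ is $(V, \delta)$--dense by hypothesis. Then Lemma~\ref{lemma:co_maximal_product} applied to $(J, I_{n+1})$ yields that $J \cap I_{n+1} = \bigcap_{k=1}^{n+1} I_k$ is $V$--dense \emph{in $J$}.

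The final step is to promote ``$V$--dense in $J$'' together with ``$J$ is $V$--dense in $R$'' to ``$V$--dense in $R$''. This is a routine triangle--inequality argument using axiom~\ref{def:pseudo_valuated_ring:sub_additive} of Definition~\ref{def:pseudo_valuated_ring}: given $r \in R$ and $\eps > 0$, first pick $r' \in J$ with $V(r - r') < \eps/2$, then $r'' \in J \cap I_{n+1}$ with $V(r' - r'') < \eps/2$, and conclude $V(r - r'') < \eps$.

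I do not expect a genuine obstacle, since the statement is essentially a bookkeeping iteration of Lemma~\ref{lemma:co_maximal_product}. The one mildly delicate point is conceptual: we must be comfortable applying Lemma~\ref{lemma:TCM_Properties}.\ref{lemma:TCM_Properties:Intersection} in the possibly non--Hausdorff topology generated by the single pseudo--valuation $V$, rather than in the full ring topology which may be generated by many pseudo--valuations. Since the proof of that lemma only uses the ring topology axioms and neighborhoods of $1$, and never separation, this transfer is harmless.
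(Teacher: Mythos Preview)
Your proposal is correct and follows essentially the same route as the paper: induction on $n$, base case via Lemma~\ref{lemma:pseudo_valuated_ceil_size}, inductive step by applying Lemma~\ref{lemma:co_maximal_product} to the pair $\big(\bigcap_{k=1}^{n} I_k,\ I_{n+1}\big)$ after invoking Lemma~\ref{lemma:TCM_Properties}.\ref{lemma:TCM_Properties:Intersection}, and a final transitivity of $V$--density via the triangle inequality. Your remark about working in the (possibly non--Hausdorff) topology generated by the single pseudo--valuation $V$ is a nice bit of extra care that the paper leaves implicit.
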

	\begin{proof}
		By induction on $n$. For $n = 1$, we simply apply Lemma~\ref{lemma:pseudo_valuated_ceil_size}. Now suppose that our claim is true for some natural number $n \in \N$; we will prove it for \linebreak $n + 1$. Suppose that $I_{1}, \dotsc, I_{n}, I_{n + 1}$ are $(V, \delta)$--dense for some $0 < \delta < 1$ and let $I = \bigcap_{k = 1}^{n} I_{k}, J = I_{n + 1}$. Note that $I$ and $J$ are TCM by Lemma~\ref{lemma:TCM_Properties}. Applying Lemma~\ref{lemma:co_maximal_product} we conclude that $I \cap J = \bigcap_{k = 1}^{n + 1}I_{k}$ is $V$--dense in $I$. The latter is $V$--dense by the induction hypothesis. It is not hard to see that as a consequence $\bigcap_{k = 1}^{n + 1}I_{k}$ is $V$--dense.
		\fullproof
		We have shown that $\bigcap_{k = 1}^{n + 1}I_{k}$ is $V$--dense in $I$ which is $V$--dense. Let $\eps > 0$ and $r \in R$. Because $I$ is $V$--dense, there exists some $a \in I$ such that $V(r - a) \leq \frac{1}{2}\eps$. Moreover, since $\bigcap_{k = 1}^{n + 1}I_{k}$ is $V$--dense in $I$, there exists some $x \in \bigcap_{k = 1}^{n + 1}I_{k}$ such that $V(a - x) \leq \frac{1}{2}\eps$. Thus:
		\[
		V(r - x) = 
		V(r - a + a - x) \leq 
		V(r - a) + V(a - x) \leq 
		\frac{1}{2}\eps + \frac{1}{2}\eps \leq
		\eps.\]
		This proves that $\bigcap_{k = 1}^{n + 1}I_{k}$ is $V$--dense in $R$.
		\finishfullproof
	\end{proof}
	\begin{lemma} \label{lemma:Compact_Product_Converges}
		Let $R$ be a supercomplete, pseudo--valuated ring and let $\mathcal{I}$ be a compact family of closed ideals. Consider the directed family $D$ of all finite subsets of $\mathcal{I}$ and the descending net $\eta\colon D \to \mathcal{L}(R)$ defined by $\eta(\left\{\, I_{1}, \dotsc, I_{n} \,\right\}) := \bigcap\limits_{k = 1}^{n} I_{k}$. If $\mathcal{I}$ is pairwise TCM, then this net converges to $\bigcap\limits_{I \in \mathcal{I}} I$.
	\end{lemma}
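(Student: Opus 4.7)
My plan is to prove first that $\eta$ is a Cauchy net in $\mathcal{L}(R)$, then invoke supercompleteness to extract a limit, and finally identify this limit with $\bigcap_{I \in \mathcal{I}} I$. Since $\eta$ is descending along the inclusion order, whenever $D_0 \subseteq D$ we automatically have $\eta(D) \subseteq \eta(D_0)$; the content of the Cauchy condition is therefore the reverse approximation $\eta(D_0) \subseteq \eta(D) + \eps$. Fix a basic neighborhood $B_V(\eps)$ of zero in $R$ and choose any $\delta \in (0,1)$. Applying Corollary~\ref{corol:Pairwise_TCM_Families} with the neighborhood $B_V(\delta)$ produces a finite $D_0 \subseteq \mathcal{I}$ such that every $I \in \mathcal{I} \setminus D_0$ satisfies $R = I + B_V(\delta)$, i.e., is $(V,\delta)$-dense. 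This $D_0$ will serve as the Cauchy index.

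For any finite $D \supseteq D_0$, set $D' := D \setminus D_0$, $A := \eta(D_0) = \bigcap_{I \in D_0} I$, and $B := \bigcap_{I \in D'} I$ (with $B := R$ when $D' = \emptyset$). The family $D'$ is a finite, pairwise TCM collection of $(V,\delta)$-dense ideals, so Corollary~\ref{corol:co_maximal_product} yields that $B$ is $V$-dense in $R$. Using Lemma~\ref{lemma:TCM_Properties}.\ref{lemma:TCM_Properties:Intersection} twice---first to obtain $I \bot B$ for each $I \in D_0$, and then, with the roles of $A$ and $B$ swapped, to obtain $B \bot A$---gives $A \bot B$. Since in a pseudo-valuated ring TCM is equivalent to TCM with respect to each individual pseudo-valuation, Lemma~\ref{lemma:co_maximal_product} then shows that $A \cap B = \eta(D)$ is $V$-dense in $A = \eta(D_0)$. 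In particular $\eta(D_0) \subseteq \eta(D) + B_V(\eps)$, so $\eta$ is Cauchy. Because $R$ is supercomplete, $H(R)$ is complete, so $\eta$ converges to some $L \in H(R)$; since $\mathcal{L}(R)$ is closed in $H(R)$ and $\eta$ takes values there, in fact $L \in \mathcal{L}(R)$.

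It remains to identify $L$ with $K := \bigcap_{I \in \mathcal{I}} I$. For $K \subseteq L$: for every $D$ we have $K \subseteq \eta(D)$, and eventually $\eta(D) \subseteq L + \eps$, so $K \subseteq L + \eps$ for every neighborhood $\eps$ of zero, whence $K \subseteq \overline{L} = L$. Conversely, fixing any $I_0 \in \mathcal{I}$, every $D \ni I_0$ satisfies $\eta(D) \subseteq I_0$, while eventually $L \subseteq \eta(D) + \eps$, giving $L \subseteq I_0 + \eps$ and therefore $L \subseteq I_0$; since $I_0$ was arbitrary, $L \subseteq K$. The main obstacle is the Cauchy step, which must simultaneously exploit compactness (via Corollary~\ref{corol:Pairwise_TCM_Families}, to leave only finitely many ``bad'' ideals outside $D_0$), $V$-denseness of the intersection of the remaining ``good'' ideals (via Corollary~\ref{corol:co_maximal_product}), and the TCM-intersection mechanism (Lemma~\ref{lemma:co_maximal_product}) in order to produce the single approximation $\eta(D_0) \subseteq \eta(D) + B_V(\eps)$; once this is in hand, the remaining steps are routine.
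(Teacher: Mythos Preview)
Your proof is correct and follows essentially the same route as the paper's: isolate a finite $D_0$ via Corollary~\ref{corol:Pairwise_TCM_Families}, use Corollary~\ref{corol:co_maximal_product} to make the intersection over $D \setminus D_0$ $V$-dense, then apply Lemma~\ref{lemma:co_maximal_product} to get $\eta(D_0) \subseteq \eta(D) + B_V(\eps)$. The only cosmetic difference is that the paper invokes Lemma~\ref{lemma:Hyperspace_Limit_Of_Monotone_Sequence} to identify the limit of a convergent descending net with its intersection, whereas you verify both inclusions $K \subseteq L$ and $L \subseteq K$ by hand; both arguments are short and equivalent.
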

	\begin{proof}
		Since $R$ is supercomplete, by Lemma ~\ref{lemma:Hyperspace_Limit_Of_Monotone_Sequence} it is sufficient to show that $\eta$ is a Cauchy net in $\mathcal{L}(R)$.
		Let $V \in \mathcal{V}$ be a pseudo--valuation on $R$ and $\eps > 0$. We need to find $F_{0} \in D$ such that if $F_{0} \subseteq F \in D$ then $(\eta(F_{0}), \eta(F)) \in H(B_{V}(\eps))$. 
		
		By Corollary~\ref{corol:Pairwise_TCM_Families}, there exists a finite $F_{0} \subseteq \mathcal{I}$ such that any $I \in \mathcal{I}$ not belonging to $F_{0}$ is $(V, \frac{1}{2})$--dense.
		We will prove that this is the $F_{0}$ we are looking for. Suppose that $F_{0} \subseteq F \in D$. Since $\eta(F_{0}) \supseteq \eta(F)$, it is enough to prove that
		\[ \eta(F_{0}) \subseteq \eta(F) + B_{V}(\eps),\]
		meaning that $\eta(F)$ is $(V, \eps)$--dense in $\eta(F_{0})$.
		We write $\widehat{F} := F \setminus F_{0}$. 
		Notice that $\widehat{F}$ is pairwise TCM and its elements are $(V, \frac{1}{2})$--dense and therefore $\eta\big(\widehat{F}\big) = \bigcap_{I \in \widehat{F}} I$ is $V$--dense by Corollary~\ref{corol:co_maximal_product}.
		By Lemma~\ref{lemma:TCM_Properties}, $\eta(F_{0})$ and $\eta\big(\widehat{F}\big)$ are TCM so we can apply Lemma~\ref{lemma:co_maximal_product} to conclude that $\eta(F_{0}) \cap \eta\big(\widehat{F}\big)$ is $V$--dense in $\eta(F_{0})$. This completes the proof since $\eta(F) = \eta(F_{0}) \cap \eta\big(\widehat{F}\big)$.
	\end{proof}
	
	\begin{lemma} \label{lemma:Compact_Intersection_Continuity}
		Let $R$ be a supercomplete, pseudo--valuated ring and let $\mathcal{I}$ be a compact family of two--sided ideals of $R$.
		If $\mathcal{I}$ is pairwise co--maximal, then for any $\eps \in \neighbor[R]{0}$ there exists $\delta \in \neighbor[R]{0}$ such that $\bigcap \limits_{I \in \mathcal{I}} (I + \delta) \subseteq \big(\bigcap \limits_{I \in \mathcal{I}} I\big) + \eps$.
	\end{lemma}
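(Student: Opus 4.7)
The strategy is to reduce this infinite--family statement to the finite case (Lemma~\ref{lemma:Finite_Intersection_Continuity}) via a two--step truncation argument: use Lemma~\ref{lemma:Compact_Product_Converges} to control the ``tail'' of the intersection (those $I \in \mathcal{I}$ that we discard) and use Lemma~\ref{lemma:Finite_Intersection_Continuity} to handle the finite ``head'' with the usual $\delta$. Since pairwise co--maximal ideals are a fortiori pairwise TCM, both lemmas apply.

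Given $\eps \in \neighbor[R]{0}$, first pick a symmetric $\eps_{1} \in \neighbor[R]{0}$ with $\eps_{1} + \eps_{1} \subseteq \eps$. By Lemma~\ref{lemma:Compact_Product_Converges}, the descending net $\eta$ of finite intersections converges in the hyperspace to $L := \bigcap_{I \in \mathcal{I}} I$. Hence there exists a finite $F_{0} \subseteq \mathcal{I}$ such that
\[
  \bigcap_{I \in F_{0}} I \; = \; \eta(F_{0}) \; \subseteq \; L + \eps_{1}.
\]
The finite family $F_{0}$ is pairwise co--maximal (as a sub--family of $\mathcal{I}$), so Lemma~\ref{lemma:Finite_Intersection_Continuity} supplies a $\delta \in \neighbor[R]{0}$ with
\[
  \bigcap_{I \in F_{0}} (I + \delta) \; \subseteq \; \bigcap_{I \in F_{0}} I + \eps_{1}.
\]
Now if $r \in \bigcap_{I \in \mathcal{I}}(I + \delta)$, then in particular $r \in \bigcap_{I \in F_{0}}(I + \delta)$, so
\[
  r \; \in \; \bigcap_{I \in F_{0}} I + \eps_{1} \; \subseteq \; L + \eps_{1} + \eps_{1} \; \subseteq \; L + \eps,
\]
which is the desired inclusion.

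\textbf{Main obstacle.} The only real subtlety is the tail bound $\bigcap_{F_{0}} I \subseteq L + \eps_{1}$; the ideals outside $F_{0}$ cannot be bounded individually, and one needs the supercompleteness of $R$ together with the pseudo--valuated structure (i.e.\ Lemma~\ref{lemma:Compact_Product_Converges}) to ensure that discarding all but finitely many members of $\mathcal{I}$ does not enlarge the intersection by more than a prescribed neighborhood. Once this convergence is in hand, the remainder of the argument is a mechanical application of the finite case together with the triangle--inequality--style splitting $\eps_{1} + \eps_{1} \subseteq \eps$.
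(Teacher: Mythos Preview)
Your proof is correct and follows essentially the same route as the paper's: invoke Lemma~\ref{lemma:Compact_Product_Converges} to pass to a finite subfamily $F_{0}$ whose intersection is within $\eps_{1}$ of $\bigcap_{I\in\mathcal{I}} I$, then apply Lemma~\ref{lemma:Finite_Intersection_Continuity} to that finite family and combine via $\eps_{1}+\eps_{1}\subseteq\eps$. The paper's version is identical up to notation (it writes $\tfrac{1}{2}\eps$ in place of your $\eps_{1}$ and lists the ideals as $I_{1},\dotsc,I_{n}$ rather than as a set $F_{0}$).
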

	\begin{proof}
		Let $\eps$ be a neighborhood of zero in $R$. By Lemma~\ref{lemma:Compact_Product_Converges}, there exist \linebreak $I_{1}, \dotsc, I_{n} \in \mathcal{I}$ such that $\bigcap\limits_{k = 1}^{n} I_{k} \subseteq \bigcap\limits_{I \in \mathcal{I}} I + \frac{1}{2} \eps$. By Lemma~\ref{lemma:Finite_Intersection_Continuity}, there exists $\delta \in \neighbor[R]{0}$ such that
		$\bigcap \limits_{k = 1}^{n} (I_{k} + \delta) \subseteq \big( \bigcap \limits_{k = 1}^{n} I_{k} \big) + \frac{1}{2}\eps$. 
		Then we obtain:
		\[ \bigcap\limits_{I \in \mathcal{I}} (I + \delta) \subseteq
		   \bigcap\limits_{k = 1}^{n} (I_{k} + \delta) \subseteq
		   \big(\bigcap\limits_{k = 1}^{n} I_{k} \big) + \frac{1}{2}\eps \subseteq
		   \big(\bigcap\limits_{I \in \mathcal{I}} I\big) + \frac{1}{2} \eps + \frac{1}{2}\eps \subseteq
		   \big(\bigcap\limits_{I \in \mathcal{I}} I\big)+ \eps,\]
		which completes the proof.
	\end{proof}
	
	\subsection{The Hyperspace of $A(\Omega)$}
	One important example of a topological ring is the ring $A(\Omega)$ of analytic functions on some domain $\Omega \subseteq \C$ with the compact--open topology. 
	
	\begin{lemma} \label{lemma:Analytic_Basic_Properties}
		Let $\Omega$ be an open subset of $\C$ and let $R$ be the topological ring of all analytic functions over $\Omega$ equipped with the compact--open topology. Then
		\ben 
			\item $R$ is metrizable. \label{lemma:Analytic_Basic_Properties:metric}
			\item $R$ is a complete topological ring. \label{lemma:Analytic_Basic_Properties:complete}
			\item $R$ is pseudo--valuated. \label{lemma:Analytic_Basic_Properties:pseudo_valuated}
		\een
	\end{lemma}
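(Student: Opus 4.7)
The plan is to handle the three items one at a time, each reducing to a standard fact about analytic functions or to a result already established in the excerpt. Throughout I will use the compact exhaustion of $\Omega$: since $\Omega$ is an open subset of $\C$, it is $\sigma$--compact, so I can choose an increasing sequence of compact subsets $K_{1} \subseteq K_{2} \subseteq \dotsb \subseteq \Omega$ (for instance $K_{n} := \{\, z \in \Omega \mid |z| \leq n \text{ and } \d(z, \partial \Omega) \geq 1/n \,\}$) whose union is $\Omega$ and which is cofinal in the directed system of all compact subsets of $\Omega$, in the sense that every compact $K \subseteq \Omega$ is contained in some $K_{n}$.

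For (\ref{lemma:Analytic_Basic_Properties:metric}), I would note that the compact--open topology on $C(\Omega, \C)$ is generated by the seminorms $f \mapsto \sup_{z \in K} |f(z)|$ as $K$ ranges over compact subsets of $\Omega$. By the cofinality above it is enough to use the countable family of seminorms attached to the $K_{n}$, which produces a translation--invariant pseudometric inducing the same topology. The subspace $A(\Omega) \subseteq C(\Omega, \C)$ therefore inherits a metrizable topology.

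For (\ref{lemma:Analytic_Basic_Properties:complete}), the ring $C(\Omega, \C)$ with the compact--open topology is complete because $\C$ is (a Cauchy net converges uniformly on each compact set to a continuous limit). Thus it suffices to show $A(\Omega)$ is closed in $C(\Omega, \C)$. This is precisely Weierstrass' theorem: a function that is the uniform--on--compacts limit of holomorphic functions is itself holomorphic. Applying this to any convergent net whose elements lie in $A(\Omega)$ shows that the limit is analytic, so $A(\Omega)$ is closed in the complete ring $C(\Omega, \C)$ and hence complete itself.

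Finally, (\ref{lemma:Analytic_Basic_Properties:pseudo_valuated}) is almost immediate from what the excerpt has already built. The field $\C$ is pseudo--valuated by the single pseudo--valuation $V(z) := |z|$, so by Example~\ref{example:Function_are_pseudo_valuated} the ring $C(\Omega, \C)$ is pseudo--valuated by the family $\{\, V_{K} \,\}_{K}$, where $V_{K}(f) = \sup_{z \in K} |f(z)|$, and this family generates precisely the compact--open topology. Restricting each $V_{K}$ to the subring $A(\Omega)$ yields a family of pseudo--valuations on $A(\Omega)$; they still separate points (an analytic function whose sup--norm vanishes on every compact subset of $\Omega$ is identically zero), and they still generate the subspace topology, which is exactly the compact--open topology on $A(\Omega)$. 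The main small obstacle is the bookkeeping check that the induced topology of the seminorm family on $A(\Omega)$ agrees with its inherited compact--open topology, but this is immediate since the inherited topology is by definition generated by the restricted seminorms.
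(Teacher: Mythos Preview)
Your proposal is correct and follows essentially the same route as the paper: a compact exhaustion of $\Omega$ gives a countable fundamental system of neighborhoods of zero (hence metrizability), Weierstrass' theorem makes $A(\Omega)$ closed in the complete ring $C(\Omega,\C)$ (hence complete), and Example~\ref{example:Function_are_pseudo_valuated} applied to $\C$ with $V(z)=|z|$ yields the pseudo--valuation structure on the subring. The only cosmetic difference is that the paper packages the exhaustion as a separate lemma and cites external references for the standard analytic facts, whereas you inline them.
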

	\begin{proof}~\\
		\ben
			\item We will show that $C(\Omega, \C)$ is metrizable, thus showing that $R$ is also metrizable as a subspace. By~\cite[Theorem~6.7\fullproof on p.~50\finishfullproof]{Warner}, it is enough to show that $C(\Omega, \C)$ has a countable fundamental system of neighborhoods of zero. By Lemma~\ref{lemma:Compact_Coverage_Of_Open_Set}, there exists an ascending sequence $\left\{\, K_{n} \,\right\}_{n \in \N}$ of compact subsets of $\Omega$ such that any compact subset of $\Omega$ is contained in some $K_{n}$. Define:
			\[ U_{n} := \left\{\, f \in C(\Omega, \C) \mid \forall z \in K_{n}: \lvert f(z) \rvert < \frac{1}{n} \,\right\} \]
			for all $n \in \N$. Note that $\left\{\, U_{n} \,\right\}_{n \in \N}$ is a countable fundamental system of neighborhoods of zero.
			\fullproof
			Suppose that $K$ is any compact subset of $\Omega$ and $\eps > 0$. We need to find 
			\[
			U_{n} \subseteq
			\left\{\, f \in C(\Omega, \C) \mid \forall x \in K : |f(x)| \leq \eps  \,\right\}
			\].
			By the choice of $\left\{\, K_{n} \,\right\}_{n \in \N}$, there exists some finite $n_{1} \in \N$ such that $K \subseteq K_{n_{1}}$. Moreover, there exists some $n_{2} \in \N$ such that $\frac{1}{n_{1}} \leq \eps$. Writing $n := \max \left\{\, n_{1}, n_{2} \,\right\}$ we get the desired result.
			\finishfullproof
			\item By~\cite[Theorem~3.5.1\fullproof on p.~88\finishfullproof]{RESG}, if a sequence of $R$ converges to an element $f \in C(\Omega, \C)$, then $f \in R$. Since $C(\Omega, \C)$ is metrizable by (\ref{lemma:Analytic_Basic_Properties:metric}), we conclude that $R$ is a closed subspace of $C(\Omega, \C)$. The space $C(\Omega, \C)$ is complete by~\cite[Theorem~12 on p.~231]{Kelley} so 
			$R$ is complete as a closed subset of a complete set.
			\item By Example~\ref{example:Function_are_pseudo_valuated}, $C(\Omega, \C)$ is pseudo--valuated and therefore $R$ is also pseudo--valuated as one of its subrings.
		\een
	\end{proof}
	
	\begin{example} 
		Let $R = A(\Omega)$ be the topological ring of complex analytic functions over the domain $\Omega \subseteq \C$ with the compact--open topology. Given $z_{0} \in \Omega$, let $I := \left\{\, f \in R|f(z_{0}) = 0  \,\right\}$. The descending sequence of ideals $S_{n} := I^{n}$ does \textbf{not} converge. 
	\end{example}
	\begin{proof}
		If $S_{n}$ converges, then it converges to $\bigcap_{n \in \N} S_{n}$ (by Lemma~\ref{lemma:Hyperspace_Limit_Of_Monotone_Sequence}).
		Choose $z_{0} \neq z_{1} \in \C$ and define $J :=\left\{\, f \in R \mid f(z_{1}) = 0 \,\right\}$. Then $J$ is a maximal ideal and for any $n \in \N$, $S_{n} \nsubseteq J$ , hence $J \bot S_{n}$ for any natural $n$. Note that
		\[\bigcap_{n \in \N} S_{n} = \left\{\, f \in R| \ \forall \ m \in \N: f^{(m)}(z_{0}) = 0  \,\right\} = \left\{\, 0 \,\right\}.\]
		The last equality is due to the fact that a non zero analytic function can only have roots of finite order. 
		If $\left\{\, S_{n} \,\right\}_{n}$ had converged to $\bigcap_{n \in \N} S_{n}$, then $J \bot \bigcap_{n \in \N} S_{n}$ would imply that $J \bot \left\{\, 0 \,\right\}$ by Corollary~\ref{corol:TCM_Is_Closed}. This is clearly not the case since $J$ is a proper closed ideal.
		Thus $\left\{\, S_{n} \,\right\}_{n \in \N}$ does not converge to $\bigcap_{n \in \N} S_{n}$ and therefore it does not converge at all. 
	\end{proof}
	
	\begin{defi}
		Let $\Omega \subseteq \widehat{\C}$ be an open set. A subset $A \subseteq \Omega$ is \emph{faithful to $\Omega$} if any connected component of $\widehat{\C} \setminus A$ intersects $\widehat{\C} \setminus \Omega$.
	\end{defi}
	We will later see that any compact subset of an open $\Omega \subseteq \C$ is contained in another compact subset faithful to $\Omega$.
	\begin{lemma} \label{lemma:Converging_Analytic_Ideals}
		Let $\Omega \subseteq \C$ be a domain and $\left\{\, z_{n} \,\right\}_{n \in \N} \subseteq \Omega$ be a sequence having no accumulation points in $\Omega$. Consider the topological ring $R = A(\Omega)$ with the compact--open topology. Given a corresponding sequence of integers $\left\{\, m_{n} \,\right\}_{n \in \N} \subseteq \N$, we define $I_{n} := \left\langle z - z_{n} \right\rangle ^{m_{n}} \unlhd R$. Then $\lim \limits_{n \to \infty} I_{n} = R$.
	\end{lemma}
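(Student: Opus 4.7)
The plan is to verify the hyperspace convergence $I_n \to R$ directly from the definition. By Lemma~\ref{lemma:Analytic_Basic_Properties}, $R$ is pseudo--valuated, with a fundamental system of neighborhoods of zero given by the balls $B_{V_K}(\eps)$ as $K$ ranges over compact subsets of $\Omega$ and $\eps > 0$, where $V_K(f) = \sup_{z \in K} |f(z)|$. Since $I_n \subseteq R$ always holds, it suffices to show that for every such $K$ and every $\eps > 0$, one has $R \subseteq I_n + B_{V_K}(\eps)$ for all sufficiently large $n$; equivalently, every $f \in R$ is uniformly approximable on $K$ to within $\eps$ by some element of $I_n$.

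Fix $K$ and $\eps > 0$. First, enlarge $K$ to a compact set $K'$ with $K \subseteq K' \subseteq \Omega$ that is faithful to $\Omega$; such a $K'$ exists by the fact stated in the paragraph preceding this lemma. Because $\{z_n\}_{n \in \N}$ has no accumulation point in $\Omega$ while $K'$ is compact in $\Omega$, only finitely many of the $z_n$ can lie in $K'$, so $z_n \notin K'$ for all sufficiently large $n$. It therefore suffices to handle these $n$.

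For such $n$ and arbitrary $f \in R$, define $h_0(z) := f(z)/(z-z_n)^{m_n}$, which is holomorphic on $\Omega \setminus \{z_n\}$, and in particular on an open neighborhood of $K'$. Since $K'$ is faithful to $\Omega$, Runge's approximation theorem produces $h \in A(\Omega)$ with $\sup_{z \in K'} |h(z) - h_0(z)|$ as small as desired. Set $M := \sup_{z \in K'} |z - z_n|^{m_n} < \infty$ and choose $h$ so that $\sup_{z \in K'} |h - h_0| < \eps / (M + 1)$; then $g := (z - z_n)^{m_n} h$ belongs to $I_n$ and
\[ V_K(f - g) = \sup_{z \in K} \bigl| (z - z_n)^{m_n}\bigl(h_0(z) - h(z)\bigr) \bigr| \leq M \cdot \sup_{z \in K'} |h_0 - h| < \eps, \]
which is the desired approximation.

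The main obstacle is arranging Runge's hypotheses: one needs every connected component of $\widehat{\C} \setminus K'$ to intersect $\widehat{\C} \setminus \Omega$, which is exactly the faithfulness property used to enlarge $K$, and one needs $h_0$ to be holomorphic in a neighborhood of $K'$, which is immediate from $z_n \notin K'$. Once both are in place, the estimate above is routine bookkeeping.
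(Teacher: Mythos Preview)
Your proposal is correct and follows essentially the same approach as the paper: enlarge $K$ to a compact set faithful to $\Omega$, use the lack of accumulation points to ensure $z_n$ eventually avoids it, divide $f$ by $(z-z_n)^{m_n}$, apply Runge's theorem to approximate the quotient by an element of $A(\Omega)$, and multiply back. The only cosmetic differences are that the paper replaces $K$ itself by its faithful enlargement (via a ``without loss of generality'') rather than introducing a separate $K'$, and it spells out the choice of poles for the rational approximant rather than invoking the $A(\Omega)$-valued form of Runge directly.
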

	\begin{proof}
		Let $K \subseteq \Omega$ be a compact subset and $\eps > 0$. Let $U \in \neighbor[R]{0}$ be the neighborhood of all analytic functions on $\Omega$ such that their values on $K$ are bounded by $\eps$. We need to find $n_{0} \in \N$ such that for all $n \geq n_{0}$, $(I_{n}, R) \in H(U)$. Since $I_{n} \subseteq R$ for all $n \in \N$, it is sufficient to prove that $R \subseteq I_{n} + U$, meaning that for any $f \in R$ there exists $\tilde{f} \in I_{n}$ such that:
		\[\forall z \in K: |f(z) - \tilde{f}(z)| < \eps. \]
		
		Without loss of generality, we can assume that $K$ is faithful to $\Omega$ (otherwise, we use Lemma~\ref{lemma:Compact_Set_Extension} and replace $K$ with $\hat{K}$). Any infinite subset of a compact set of $\C$ has a limit point
		so any subset of a compact set without a limit point has to be finite. We know that $\left\{\, z_{n} \,\right\}_{n \in \N} \subseteq \Omega$ has no limit points and therefore $\left\{\, z_{n} \,\right\}_{n \in \N} \cap K$ has no limit points in $K$. This implies that $\left\{\, z_{n} \,\right\}_{n \in \N} \cap K$ is finite, or equivalently, that there exists $n_{0} \in \N$ such that for all $n \geq n_{0}$, $z_{n} \notin K$. 
		We claim that this is the $n_{0}$ we were looking for.
		
		To prove this let $f \in R$ be any analytic function on $\Omega$ and $n \geq n_{0}$. Consider the function 
		$g(z) := \dfrac{f(z)}{(z - z_{n})^{m_{n}}}$ which is analytic on $\Omega \setminus \left\{\, z_{n} \,\right\}$. 
		In particular, it is analytic on some neighborhood of $K$.
			
		Since $K$ is compact, we can define $M := \sup \limits_{z \in K} |(z - z_{n})^{m_{n}}|$.  
		We also define 
		\[\conn(\C \setminus K) = \left\{\, C_{\lambda}  \,\right\}_{\lambda \in \Lambda}.\]
		
		Because every connected component of $\hat{\C} \setminus K$ intersects $\hat{\C} \setminus \Omega$, we can pick a number $w_{\lambda} \in C_{\lambda} \cap (\C \setminus \Omega)$ for every $\lambda \in \Lambda$.
		By Runge theorem~\cite[Theorem~12.1.1\fullproof on p.~363 \finishfullproof]{RESG}, there exists a rational function $h$ whose poles are inside $\left\{\, w_{\lambda} \,\right\}_{\lambda \in \Lambda} \subseteq \hat{\C} \setminus \Omega$, so $h$ is analytic on $\Omega$ and
		\[\forall z \in K: |h(z) - g(z)| = \left|h(z) - \dfrac{f(z)}{(z - z_{n})^{m_{n}}}\right| < \frac{\eps}{M}\]
		\[\Downarrow\]
		\[\forall z \in K: |h(z)(z - z_{n})^{m_{n}} - f(z)| < \frac{\eps}{M} \sup \limits_{z \in K} |(z - z_{n})^{m_{n}}| = \eps.\]
		We define $\tilde{f}(z) := h(x)(z - z_{n})^{m_{n}} \in I_{n}$. Hence $\tilde{f}$ approximates $f$ on $K$ proving that $\lim \limits_{n \to \infty} I_{n} = R$.
	\end{proof}

	\begin{corol} \label{corol:Analytic_Zeros_Are_Compact}
		Let $\Omega \subseteq \C$ be a domain and $\left\{\, z_{n} \,\right\}_{n \in \N} \subseteq \Omega$ be a sequence having no accumulation points in $\Omega$. Consider the topological ring $R = A(\Omega)$ with the compact--open topology. For a given sequence of integers $\left\{\, m_{n} \,\right\}_{n \in \N} \subseteq \N$, we define $I_{n} := \left\langle z - z_{n} \right\rangle ^{m_{n}} \unlhd R$. Then $\mathcal{I} := \left\{\, I_{n} \,\right\}_{n = 1}^{\infty} \cup \left\{\, R \,\right\}$ is compact in $\mathcal{L}(R)$.
	\end{corol}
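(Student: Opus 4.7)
The plan is to reduce compactness of $\mathcal{I}$ to the convergence result already established in Lemma~\ref{lemma:Converging_Analytic_Ideals}. That lemma tells us precisely that $\lim_{n \to \infty} I_{n} = R$ in $\mathcal{L}(R)$. Thus $\mathcal{I}$ is literally a convergent sequence together with its limit point, and I would invoke the standard topological fact that such a set is always compact in a Hausdorff space. Since $\mathcal{L}(R)$ is Hausdorff (all spaces in the paper are), this suffices.

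Concretely, I would verify the compactness directly from the open-cover definition. Given an open cover $\{U_{\lambda}\}_{\lambda \in \Lambda}$ of $\mathcal{I}$ in $\mathcal{L}(R)$, there exists some $\lambda_{0}$ with $R \in U_{\lambda_{0}}$. By Lemma~\ref{lemma:Converging_Analytic_Ideals} applied to the sequence $\{I_{n}\}_{n \in \N}$, there is an $N \in \N$ such that $I_{n} \in U_{\lambda_{0}}$ for all $n \geq N$. For each $1 \leq k < N$ pick $\lambda_{k}$ with $I_{k} \in U_{\lambda_{k}}$. Then $\{U_{\lambda_{0}}, U_{\lambda_{1}}, \dotsc, U_{\lambda_{N-1}}\}$ is a finite subcover of $\mathcal{I}$.

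There is essentially no obstacle here, since the heavy lifting is done by Lemma~\ref{lemma:Converging_Analytic_Ideals}: it is that lemma which uses Runge's theorem and the faithfulness argument to ensure the $I_{n}$ approach $R$ in the hyperspace uniformity. The remaining content of the corollary is the purely formal observation that a sequence together with its limit is compact. As a sanity check one may note, alternatively, that $R$ is metrizable by Lemma~\ref{lemma:Analytic_Basic_Properties}, so $\mathcal{L}(R)$ is metrizable by Lemma~\ref{lemma:Hyperspace_Similarities}, and then the statement reduces to sequential compactness: any sequence in $\mathcal{I}$ either has a constant subsequence (if some element of $\mathcal{I}$ is hit infinitely often) or yields a subsequence $I_{n_{k}}$ with $n_{k} \to \infty$, which converges to $R \in \mathcal{I}$ by Lemma~\ref{lemma:Converging_Analytic_Ideals}.
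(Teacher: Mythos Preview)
Your proposal is correct. Your primary open-cover argument is actually slightly more direct than the paper's: the paper first invokes metrizability of $R$ (Lemma~\ref{lemma:Analytic_Basic_Properties}) and of $\mathcal{L}(R)$ (Lemma~\ref{lemma:Hyperspace_Similarities}) in order to reduce compactness to sequential compactness, and then runs exactly the ``constant subsequence or $n_{k}\to\infty$'' dichotomy you give as your sanity check. Your open-cover route bypasses the metrizability step entirely, using only that $I_{n}\to R$ and the elementary fact that a convergent sequence together with its limit is compact; conversely, the paper's sequential approach is the one you already record as an alternative, so the two proofs coincide at that point.
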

	\begin{proof}
		Recall that by Lemma~\ref{lemma:Analytic_Basic_Properties} $R$ is metrizable. This implies that $\mathcal{I} \subseteq \mathcal{L}(M)$ is also metrizable because of Lemma~\ref{lemma:Hyperspace_Similarities}. In the virtue of~\cite[Theorem~5 on p.~138]{Kelley}, $\mathcal{I}$ is compact if and only if any sequence has a converging subsequence.
		
		Let $\left\{\, J_{n} \,\right\}_{n \in \N}$ be a sequence in $\mathcal{I}$. We will prove that it has a converging subsequence. If $\left\{\, J_{n} \,\right\}_{n \in \N}$ is a finite subset of $\mathcal{I}$, then there is some $J \in \mathcal{I}$ which appears in the sequence infinitely many times. This means that $\left\{\, J_{n} \,\right\}_{n \in \N}$ contains a constant, and therefore converging, subsequence. 
		On the other hand, if $\left\{\, J_{n} \,\right\}_{n \in \N}$ is an infinite subset of $\mathcal{I}$, then it contains a subsequence $\left\{\, J_{n_{k}} \,\right\}_{k \in \N}$ of distinct elements in $\mathcal{I} \setminus \left\{\, R \,\right\}$. Without loss of generality, we can assume that $J_{n_{k}} = I_{n_{k}}$. Note that $\left\{\, z_{n_{k}} \,\right\}_{k \in \N}$ also has no limit points in $\Omega$ so by Lemma~\ref{lemma:Converging_Analytic_Ideals}, $J_{n_{k}}$ converges to $R \in \mathcal{I}$. In either case, $\left\{\, J_{n} \,\right\}_{n \in \N}$ possesses a converging subsequence and therefore $\mathcal{I}$ is compact.
	\end{proof}

	\section{The Chinese Remainder Approximation Theorem} \label{sec:CRAT}
	Throughout this section, let $R$ be a topological ring and let $\mathcal{I}$ be a compact family of pairwise TCM, two--sided ideals. We will denote the ring $\prod\limits_{I \in \mathcal{I}} \quot{R}{I}$ as $\Pi$ and the map sending $r \in R$ to $(r + I)_{I \in \mathcal{I}} \in \Pi$ as $\varphi$. Note that $\varphi$ is clearly a ring homomorphism whose kernel is $\bigcap_{I \in \mathcal{I}} I$. We will also denote by $\widehat{\varphi}$ the algebraic isomorphism from $\quot{R}{\ker \varphi}$ to $\varphi(R)$ obtained from $\varphi$ using the first isomorphism theorem.
	
	The Chinese Remainder Theorem states that when $\mathcal{I}$ is finite and pairwise co--maximal, $\varphi\colon R \to \Pi$ is surjective. Theorem~\ref{thm:Finite_CRAT} could easily be thought of as its immediate topological adaptation. In this section we will continue strengthening this result for the case of infinite, compact families of ideals. To do so, we need to first decide how to topologize the product. The Tychonoff topology might be the first option to come to mind but apparently, better results can be achieved with the following, stronger topology.
	
	\begin{lemma} \label{lemma:Compact_Product_Is_A_Ring}
		For any neighborhood of zero in $R$, say $\eps$, we define 
		\[U(\eps) := \{(a_{I} + I)_{I \in \mathcal{I}} \mid \forall I \in \mathcal{I}: a_{I} \in I + \eps \}.\]
		Then the family $\mathcal{U} := \left\{\, U(\eps) \,\right\}_{\eps \in \neighbor[R]{0}}$ is a fundamental system of neighborhoods of zero for a Hausdorff ring topology on $\Pi$.
	\end{lemma}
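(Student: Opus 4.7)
The plan is to verify the standard axioms for a fundamental system of neighborhoods of zero for a Hausdorff ring topology: closure under finite intersections, continuity of addition/negation/multiplication at zero, continuity of multiplication by a fixed element, and Hausdorffness. The key reformulation
\[ U(\eps) = \prod_{I \in \mathcal{I}} \pi_{I}(\eps), \]
where $\pi_{I}\colon R \to R/I$ is the quotient map, exhibits $U(\eps)$ as a box constraint and makes most axioms transparent. The filter-base property $U(\eps_{1} \cap \eps_{2}) \subseteq U(\eps_{1}) \cap U(\eps_{2})$ is immediate, and choosing $\delta$ in $R$ with $\delta + \delta \subseteq \eps$, or $\delta$ symmetric, yields the additive group axioms coordinate-wise.

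For joint continuity of multiplication at $(0,0)$, I would pick $\delta$ with $\delta \cdot \delta \subseteq \eps$; since each $I$ is two-sided, the set product satisfies $(I + \delta)(I + \delta) \subseteq I + \delta \cdot \delta \subseteq I + \eps$, so $U(\delta) \cdot U(\delta) \subseteq U(\eps)$. The main obstacle is continuity of left and right multiplication by a fixed element $a = (\bar{a}_{I})_{I \in \mathcal{I}} \in \Pi$: given $\eps$, we need a single neighborhood $\delta \in \neighbor[R]{0}$ such that $a \cdot U(\delta) \subseteq U(\eps)$ holds uniformly over all $I \in \mathcal{I}$, but the representatives $\bar{a}_{I}$ vary with $I$ and can be entirely arbitrary. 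The decisive resolution is Corollary~\ref{corol:Pairwise_TCM_Families}: by compactness and pairwise TCM, there exists a finite $\mathcal{F} \subseteq \mathcal{I}$ with $R = I + \eps$ for every $I \in \mathcal{I} \setminus \mathcal{F}$, so that $\pi_{I}(\eps) = R/I$ for all but finitely many coordinates. For $I \notin \mathcal{F}$ the constraint is therefore vacuous; for each of the finitely many $I \in \mathcal{F}$, continuity of the map $r \mapsto \pi_{I}(a_{I} r)$ at zero furnishes a neighborhood $\delta_{I} \in \neighbor[R]{0}$ with $a_{I} \delta_{I} \subseteq I + \eps$, and the finite intersection $\delta := \bigcap_{I \in \mathcal{F}} \delta_{I}$ (further intersected with the analogous choice for right multiplication) provides the required neighborhood.

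Hausdorffness follows from closedness of the ideals. If $(\bar{a}_{I})_{I}$ lies in $\bigcap_{\eps} U(\eps)$, then for every $I$ and every $\eps$ the coset $\bar{a}_{I}$ meets $I + \eps$; fixing any representative $a_{I}$, this forces $a_{I} \in \bigcap_{\eps}(I + \eps) = \overline{I} = I$, since each $I \in \mathcal{L}(R)$ is closed. Hence $\bar{a}_{I} = 0$ in every coordinate, so $\bigcap_{\eps} U(\eps) = \{0\}$.
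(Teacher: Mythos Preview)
Your proposal is correct and follows essentially the same approach as the paper: both verify the standard axioms for a neighborhood filter base at zero, and both invoke Corollary~\ref{corol:Pairwise_TCM_Families} at the decisive point---continuity of one-sided multiplication by a fixed element of $\Pi$---to reduce to finitely many coordinates and then intersect the resulting $\delta_{I}$. You additionally supply the filter-base and Hausdorffness verifications (the latter via closedness of the ideals), which the paper's proof leaves implicit after citing \cite[Theorem~3.5]{Warner}.
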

	\begin{proof}
		By~\cite[Theorem~3.5\fullproof on pp.~21--22\finishfullproof]{Warner}, we need to prove the following:
		\ben	
			\item For any $U \in \mathcal{U}$ there exists $V \in \mathcal{U}$ such that $2 V \subseteq U$. \label{lemma:Compact_Product_Is_A_Ring:Sum}
			\item For any $U \in \mathcal{U}$ there exists $V \in \mathcal{U}$ such that $-V \subseteq U$.	\label{lemma:Compact_Product_Is_A_Ring:Negative}
			\item For any $U \in \mathcal{U}$ there exists $V \in \mathcal{U}$ such that $V^{2} \subseteq U$. \label{lemma:Compact_Product_Is_A_Ring:Power}
			\item For any $U \in \mathcal{U}$ and any $\overline{r} \in \Pi$, there exists $V \in \mathcal{U}$ such that $\overline{r} V \subseteq U$ and $V \overline{r} \subseteq U$. \label{lemma:Compact_Product_Is_A_Ring:Product}
		\een
		
		Consider $U(\eps) \in \mathcal{U}$.
		Clearly, (\ref{lemma:Compact_Product_Is_A_Ring:Sum}) and (\ref{lemma:Compact_Product_Is_A_Ring:Negative}) are achieved by choosing $V := U(\frac{1}{2} \eps)$ and $V := U(-\eps)$ respectively.
		To prove (\ref{lemma:Compact_Product_Is_A_Ring:Power}), we simply take $V := U(\delta)$ for some neighborhood $\delta$ such that $\delta ^{2} \subseteq \eps$.
		Finally, let $\overline{r} := (r_{I} + I)_{I \in \mathcal{I}}$ be any element of $\Pi$.
		By Corollary~\ref{corol:Pairwise_TCM_Families}, there exists a finite family $\mathcal{F} \subset \mathcal{I}$ such that
		\[\forall I \in \mathcal{I} \setminus \mathcal{F}: R = I + \eps. \]
		Now, for any $I \in \mathcal{F}$, there exists a neighborhood of zero $\delta_{I}$ such that $r_{I} \delta_{I} \subseteq \eps$ and $\delta_{I} r_{I} \subseteq \eps$. We define $\delta := \bigcap\limits_{I \in \mathcal{F}} \delta_{I}$ and $V := U(\delta)$. Note that for any $(s_{I} + I)_{I \in \mathcal{I}} \in V$, $s_{I} \in I + \delta$ and therefore
		\[\forall I \in \mathcal{F}: 
			r_{I} s_{I}  + I \subseteq 
			r_{I} \delta + I \subseteq
			r_{I} \delta_{I} + I \subseteq
			I + \eps \]
		and similarly, $ s_{I} r_{I} + I \subseteq I + \eps$. Moreover, if $I \in \mathcal{I} \setminus \mathcal{F}$, then $R = I + \eps$ implies that $r_{I} \delta + I \subseteq I + \eps$.
		Thus, $V \overline{r} \subseteq U(\eps)$ and $\overline{r} V \subseteq U(\eps)$ which concludes the proof of (\ref{lemma:Compact_Product_Is_A_Ring:Product}).		
	\end{proof}
	From now on, we will always assume that $\Pi$ is furnished with the topology described in Lemma~\ref{lemma:Compact_Product_Is_A_Ring}.\\
	
	We are ready to prove a version of the CRT for infinite amount of ideals in general topological rings.
	\begin{thm}[The Chinese Remainder Approximation Theorem] \label{thm:CRAT}
		If $\mathcal{I}$ is a (possibly infinite) compact family of pairwise TCM two--sided ideals, then $\varphi$ is continuous and its image is dense in $\Pi$.
	\end{thm}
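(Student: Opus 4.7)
The plan is to reduce to the finite case via the cofiniteness corollary (Corollary~\ref{corol:Pairwise_TCM_Families}), which is the whole point of having equipped $\Pi$ with the ``uniform'' product topology from Lemma~\ref{lemma:Compact_Product_Is_A_Ring} rather than the Tychonoff topology.

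For continuity, I would unwind the definitions. Given a basic neighborhood $U(\eps)$ of zero in $\Pi$, it suffices to verify that $\varphi(\eps)\subseteq U(\eps)$. If $r\in\eps$, then for every $I\in\mathcal{I}$ we have $r = 0 + r \in I + \eps$, so $\varphi(r)=(r+I)_{I\in\mathcal{I}}\in U(\eps)$. Since $\varphi$ is a homomorphism of additive groups, this one computation at $0$ gives continuity everywhere.

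For density, fix an arbitrary element $(r_I + I)_{I\in\mathcal{I}} \in \Pi$ and a basic open neighborhood $U(\eps)$. I must find $r\in R$ with $r - r_I \in I + \eps$ for every $I\in\mathcal{I}$. The key input is Corollary~\ref{corol:Pairwise_TCM_Families}: since $\mathcal{I}$ is compact and pairwise TCM, there is a finite subset $\mathcal{F}\subseteq\mathcal{I}$ such that $R = I + \eps$ for every $I\in\mathcal{I}\setminus\mathcal{F}$. For $I\notin\mathcal{F}$ the required condition $r - r_I \in I + \eps = R$ is then automatic, so the task collapses to finding $r\in R$ with $r - r_I \in I + \eps$ for all $I\in\mathcal{F}$. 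The family $\mathcal{F}$, being a subfamily of $\mathcal{I}$, is itself finite and pairwise TCM, so this is exactly the conclusion of Theorem~\ref{thm:Finite_CRAT}.\ref{thm:Finite_CRAT:denseness} applied to $\mathcal{F}$ and the prescribed representatives $\{r_I\}_{I\in\mathcal{F}}$.

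I do not anticipate a real obstacle: the two nontrivial results (Corollary~\ref{corol:Pairwise_TCM_Families} and the finite CRAT) are exactly tailored to make this argument work. The only thing to watch is that the finite CRAT produces $r$ satisfying $r + I \subseteq r_I + I + \eps$, which is the same condition as $r - r_I \in I + \eps$ (using that $I$ is an additive subgroup and $\eps$ may be chosen symmetric without loss of generality, since the neighborhood filter of zero has a symmetric base). Putting the two pieces together yields both halves of the theorem.
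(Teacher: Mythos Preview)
Your proposal is correct and follows essentially the same approach as the paper: continuity via $\varphi(\eps)\subseteq U(\eps)$, and density by using Corollary~\ref{corol:Pairwise_TCM_Families} to reduce to a finite subfamily $\mathcal{F}$ and then invoking Theorem~\ref{thm:Finite_CRAT}.\ref{thm:Finite_CRAT:denseness}. Your remark about taking $\eps$ symmetric matches the paper's own assumption at the start of the density argument.
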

	\begin{proof}
		To prove the continuity of $\varphi$, let $\eps$ be a neighborhood of zero in $R$. We need to find a neighborhood $\delta$ such that $\varphi(\delta) \subseteq U(\eps)$.
		Note that 
		\[\varphi(\eps) = \{(r + I)_{I \in \mathcal{I}} \mid r \in \eps \} 
		  \subseteq \{(r_{I} + I)_{I \in \mathcal{I}} \mid \forall I \in \mathcal{I}: r_{I} \in \eps\} = U(\eps),\]
		and therefore we can simply choose $\delta := \eps$.
		
		To prove that the image of $\varphi$ is dense in $\Pi$, let $\eps$ be a symmetric neighborhood of zero in $R$ and let $\overline{r} = (r_{I} + I)_{I \in \mathcal{I}}$ be an element of $\Pi$. We need to find $r \in R$ such that $\varphi(r) \in \overline{r} + U(\eps)$.
		By Corollary~\ref{corol:Pairwise_TCM_Families}, there exists a finite $\mathcal{F} \subseteq \mathcal{I}$ such that
		\[\forall I \in \mathcal{I} \setminus \mathcal{F}: R = I + \eps.\]
		By Theorem~\ref{thm:Finite_CRAT}, there exists $r \in R$ such that
		\[\forall I \in \mathcal{F}: r + I \subseteq r_{I} + I + \eps. \]
		Moreover, for any $I \in \mathcal{I} \setminus \mathcal{F}$, $R = I + \eps$ implies $r + I \subseteq r_{I} + I + \eps$. Thus, for any $I \in \mathcal{I}$, $r + I \subseteq r_{I} + I + \eps$, or in other words $\varphi(r) \in \overline{r} + U(\eps)$, as required.
	\end{proof}
	
	\begin{corol} \label{corol:Compact_CRAT}
		If $R$ is compact, then for any family $\mathcal{I}$ of pairwise TCM two--sided ideals, $\widehat{\varphi}\colon \quot{R}{ker \varphi} \to \Pi$ 
		is a topological isomorphism.
	\end{corol}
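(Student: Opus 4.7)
The plan is to reduce to Theorem~\ref{thm:CRAT} by enlarging $\mathcal{I}$ to a compact family, and then to exploit the compactness of $R$ to upgrade ``dense image'' into ``surjective.''

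First, I replace $\mathcal{I}$ by its closure $\mathcal{J} := \overline{\mathcal{I}}$ in $\mathcal{L}(R)$. Since $R$ is compact, Lemma~\ref{lemma:Hyperspace_Similarities} gives that $\mathcal{L}(R)$ is compact, so $\mathcal{J}$ is compact as a closed subspace; by Lemma~\ref{lemma:pairwise_TCM_closure}, $\mathcal{J}$ is still pairwise TCM. Write $\Pi' := \prod_{J \in \mathcal{J}} \quot{R}{J}$ equipped with the topology of Lemma~\ref{lemma:Compact_Product_Is_A_Ring}, let $\varphi'\colon R \to \Pi'$ be the corresponding diagonal homomorphism, and let $\pi\colon \Pi' \to \Pi$ be the surjective homomorphism that forgets the $J$'s outside $\mathcal{I}$. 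Writing $U_\mathcal{I}(\eps) \subseteq \Pi$ and $U_\mathcal{J}(\eps) \subseteq \Pi'$ for the basic neighborhoods of zero, one has $\pi^{-1}(U_\mathcal{I}(\eps)) \supseteq U_\mathcal{J}(\eps)$, so $\pi$ is continuous, and by construction $\varphi = \pi \circ \varphi'$.

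Now Theorem~\ref{thm:CRAT} applied to the compact pairwise TCM family $\mathcal{J}$ gives that $\varphi'(R)$ is dense in $\Pi'$. Since $\varphi'$ is continuous and $R$ is compact, $\varphi'(R)$ is compact, hence closed in the Hausdorff space $\Pi'$. Density and closedness together force $\varphi'(R) = \Pi'$, whence $\varphi(R) = \pi(\Pi') = \Pi$. In other words, $\varphi$ is surjective, and consequently $\widehat{\varphi}\colon \quot{R}{\ker \varphi} \to \Pi$ is an algebraic isomorphism.

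Continuity of $\widehat{\varphi}$ is inherited from that of $\varphi$, which in turn follows by choosing $\delta := \eps$ exactly as in the continuity part of Theorem~\ref{thm:CRAT}. Finally, $\quot{R}{\ker \varphi}$ is compact as a Hausdorff quotient of the compact ring $R$, and $\Pi$ is Hausdorff by Lemma~\ref{lemma:Compact_Product_Is_A_Ring}, so the continuous algebraic bijection $\widehat{\varphi}$ is automatically a homeomorphism. The main technical point is the reduction step: verifying that passing from $\mathcal{I}$ to $\mathcal{J}$ preserves pairwise TCM (which is exactly Lemma~\ref{lemma:pairwise_TCM_closure}) and that $\pi$ remains continuous in the non--product topology of Lemma~\ref{lemma:Compact_Product_Is_A_Ring}; once those are in hand, the rest is the standard ``continuous bijection from compact to Hausdorff'' argument.
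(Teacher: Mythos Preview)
Your proof is correct and follows the same overall strategy as the paper: compactify the family of ideals inside $\mathcal{L}(R)$, apply Theorem~\ref{thm:CRAT} to get a dense image, use compactness of $R$ to upgrade density to surjectivity, and finish with a compactness argument for the topological isomorphism.

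The only noteworthy difference is in how the compactification is carried out. The paper adjoins $R$ to $\mathcal{I}$ and then argues, via Corollary~\ref{corol:Pairwise_TCM_Families}, that every $I \neq R$ in the closure is isolated and hence already in $\mathcal{I}$; thus $\mathcal{I} \cup \{R\}$ is itself closed and no auxiliary product is needed. You instead pass to the full closure $\mathcal{J}$ and introduce the projection $\pi\colon \Pi' \to \Pi$. Both work, but the paper's route is slightly more economical: it shows that $\mathcal{J} \setminus \mathcal{I} \subseteq \{R\}$, so the extra factors in your $\Pi'$ are at most a single trivial $\quot{R}{R}$, making $\pi$ essentially the identity. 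Your approach, on the other hand, sidesteps the isolated-point observation entirely. For the final step the paper invokes openness of $\varphi$ together with~\cite[Theorem~5.11]{Warner}, while you use the equivalent ``continuous bijection from compact to Hausdorff'' argument. One small remark: when you cite Lemma~\ref{lemma:Compact_Product_Is_A_Ring} for the Hausdorffness of $\Pi$, that lemma is stated under the standing hypothesis that $\mathcal{I}$ is compact; the Hausdorff part, however, only needs the ideals to be closed (so that $\bigcap_{\eps} U(\eps) = \{0\}$), and this is harmless here.
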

	\begin{proof}
		Without loss of generality, we can assume that $R \in \mathcal{I}$ (since $\quot{R}{R}$ is trivial). We claim that $\mathcal{I}$ is a closed subspace of $\mathcal{L}(R)$. By Lemma~\ref{lemma:pairwise_TCM_closure}, $\overline{\mathcal{I}}$ is pairwise TCM. Suppose that $I \in \overline{\mathcal{I}}$. If $I = R$ then $I \in \mathcal{I}$. Otherwise, by Corollary~\ref{corol:Pairwise_TCM_Families}, $I$ is an isolated point in $\overline{\mathcal{I}}$ and therefore an element of $\mathcal{I}$. This proves our claim.
		
		Since $R$ is compact, so is $\mathcal{L}(R)$ (by Lemma~\ref{lemma:Hyperspace_Similarities}). Then $\mathcal{I}$, as a closed subspace of $\mathcal{L}(R)$, is also compact. Thus, we can apply Theorem~\ref{thm:CRAT} to conclude that the image of $\varphi$ is dense in $\prod$. Moreover, $\varphi$ is continuous and therefore its image is compact too. In virtue of $\varphi(R)$ being both dense and compact, $\varphi$ is surjective. Finally, since $R$ is compact, $\varphi$ is both closed and open. By~\cite[Theorem~5.11\fullproof on p.~35\finishfullproof]{Warner}, $\varphi$ being open implies that $\widehat{\varphi}$ is a topological isomorphism.
	\end{proof}
	\begin{lemma} \label{lemma:cannonical_homo_is_open}
		If $R$ is supercomplete and pseudo--valuated then $\varphi\colon R \to \Pi$ is open onto its image.
	\end{lemma}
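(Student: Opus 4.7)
Since $\varphi$ is a ring homomorphism, the plan is to reduce openness onto its image to finding, for each pseudo--valuation $V\in\mathcal{V}$ and each $\eta>0$, some $\delta>0$ with
\[\bigcap_{I\in\mathcal{I}}\bigl(I+B_V(\delta)\bigr)\subseteq\ker\varphi+B_V(\eta),\]
so that $U(B_V(\delta))\cap\varphi(R)\subseteq\varphi(B_V(\eta))$. To attack this I first split $\mathcal{I}$ into good and bad parts: by Corollary~\ref{corol:Pairwise_TCM_Families} applied to $B_V(1/2)$ there is a finite $\mathcal{F}\subseteq\mathcal{I}$ such that every $I\in\mathcal{I}\setminus\mathcal{F}$ is $(V,1/2)$--dense and hence, by Lemma~\ref{lemma:pseudo_valuated_ceil_size}, $V$--dense. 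Set $I^*:=\bigcap_{I\in\mathcal{F}}I$ and $J^*:=\bigcap_{I\in\mathcal{I}\setminus\mathcal{F}}I$, so $\ker\varphi=I^*\cap J^*$.

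Next I would show that $\ker\varphi$ is $V$--dense in $I^*$. The subfamily $\mathcal{I}\setminus\mathcal{F}$ is closed in the compact $\mathcal{I}$ (being the complement of finitely many isolated points) and pairwise TCM, so Lemma~\ref{lemma:Compact_Product_Converges} implies that the net of its finite intersections converges to $J^*$ in the hyperspace. Each such finite intersection is $V$--dense by Corollary~\ref{corol:co_maximal_product}, and a short check from the defining inclusions shows that $V$--density passes to hyperspace limits; thus $J^*$ is $V$--dense. Since $I^*$ is TCM with every finite intersection taken from $\mathcal{I}\setminus\mathcal{F}$ by Lemma~\ref{lemma:TCM_Properties}.\ref{lemma:TCM_Properties:Intersection}, Corollary~\ref{corol:TCM_Is_Closed} yields $I^*\perp J^*$, and Lemma~\ref{lemma:co_maximal_product} then gives that $\ker\varphi=I^*\cap J^*$ is $V$--dense in $I^*$.

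The remaining and hardest step is to find $\delta>0$ with
\[\bigcap_{I\in\mathcal{F}}\bigl(I+B_V(\delta)\bigr)\subseteq I^*+B_V(\eta/2).\]
For $I\in\mathcal{I}\setminus\mathcal{F}$, the set $I+B_V(\delta)$ is already all of $R$ and imposes no constraint, so combined with the previous paragraph this is what is needed. This is a finite pairwise--TCM analogue of Lemma~\ref{lemma:Finite_Intersection_Continuity}, whose original proof breaks because an approximate identity $V(1-a-b)<\gamma$ leaves behind an uncontrolled term $(1-a-b)r_k$ depending on $V(r_k)$. I expect this to be the main obstacle. My plan is induction on $|\mathcal{F}|$, with $J^*$ playing the role of a $V$--dense helper: by Lemma~\ref{lemma:co_maximal_product}, $I\cap J^*$ is $V$--dense in each $I\in\mathcal{F}$, so after decomposing $r=r_I+s_I$ one first replaces each $r_I$ by an arbitrarily close $j_I\in I\cap J^*$, and the CRT--style combination built from approximate co--maximality witnesses then lands in the desired finite intersection up to an error term absorbed into the slack produced by this preliminary density step.

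Assembling the pieces, any $r\in\bigcap_{I\in\mathcal{I}}(I+B_V(\delta))$ lies in $I^*+B_V(\eta/2)$ by the finite inclusion above and therefore in $\ker\varphi+B_V(\eta)$ by the $V$--density of $\ker\varphi$ in $I^*$, which is precisely the openness of $\varphi$ onto its image.
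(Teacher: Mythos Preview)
Your overall strategy is the same as the paper's: reduce openness to the inclusion
\[
\bigcap_{I\in\mathcal{I}}\bigl(I+\delta\bigr)\subseteq\ker\varphi+\eps,
\]
and then observe that this forces $U(\delta)\cap\varphi(R)\subseteq\varphi(\eps)$. The paper does exactly this in two lines by invoking Lemma~\ref{lemma:Compact_Intersection_Continuity} as a black box. What you have written is essentially an inlined proof of that lemma: your splitting into $\mathcal{F}$ and $\mathcal{I}\setminus\mathcal{F}$, and your argument that $\ker\varphi$ is $V$--dense in $I^*$, together reproduce Lemma~\ref{lemma:Compact_Product_Converges}; your ``hardest step'' is precisely the role Lemma~\ref{lemma:Finite_Intersection_Continuity} plays in the proof of Lemma~\ref{lemma:Compact_Intersection_Continuity}.

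The genuine gap is in that hardest step. Note that both Lemma~\ref{lemma:Finite_Intersection_Continuity} and Lemma~\ref{lemma:Compact_Intersection_Continuity} are stated for \emph{algebraic} pairwise co--maximality, not TCM: with an exact decomposition $a+b=1$ the problematic term $(1-a-b)r$ you flag simply does not exist, and that is how the paper proceeds. Your sketch for handling the TCM--only case does not close the gap: replacing each $r_I$ by a nearby $j_I\in I\cap J^*$ leaves $V(j_I)$ just as large as $V(r_I)$, so the residual $(1-a-b)j_I$ is still uncontrolled, and the fixed ``slack'' from the density step cannot absorb a term proportional to $V(r)$. If you intend to work under mere TCM you need a genuinely different mechanism here; if you are willing to assume algebraic co--maximality of the finitely many ideals in $\mathcal{F}$ (as the paper's lemmas do), then your hardest step is exactly Lemma~\ref{lemma:Finite_Intersection_Continuity} and the whole argument collapses to the paper's short proof via Lemma~\ref{lemma:Compact_Intersection_Continuity}. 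It is worth noting that the paper's citation of Lemma~\ref{lemma:Compact_Intersection_Continuity} here itself relies on the co--maximal hypothesis rather than the standing TCM hypothesis of the section, so the difficulty you isolate is real.
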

	\begin{proof}
		Let $\eps$ be a symmetric neighborhood of zero in $R$. We will prove that $\varphi(\eps)$ is a neighborhood of zero in $\varphi(R)$. By Lemma~\ref{lemma:Compact_Intersection_Continuity}, there exists a neighborhood $\delta \in \neighbor[R]{0}$ such that 
		$\bigcap\limits_{I \in \mathcal{I}} (I + \delta) \subseteq \bigcap\limits_{I \in \mathcal{I}} I + \eps$.
		We define $U = U(\delta) \cap \varphi(R)$. We will show that $U \subseteq \varphi(\eps)$ thus proving our claim. Suppose that $(r + I)_{I \in \mathcal{I}}$ is an element of $U$. Note that $r \in \bigcap_{I \in \mathcal{I}} (I + \delta)$ and therefore $r \in \bigcap_{I \in \mathcal{I}} I + \eps$ meaning that $\varphi(r) \in \varphi(\bigcap\limits_{I \in \mathcal{I}} I) + \varphi(\eps) = \varphi(\eps)$, as required.
	\end{proof}
	
	Our last theorem in this section will be a stronger version of the previous theorem for supercomplete pseudo--valuated rings. 
	\begin{thm} \label{thm:CRAT_For_Supercomplete_Pseudovaluated_Rings}
		If $R$ is supercomplete and pseudo--valuated, then $\widehat{\varphi}\colon \quot{R}{\ker \varphi} \to \Pi$ is a topological isomorphism onto $\Pi$. 
	\end{thm}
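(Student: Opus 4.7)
The plan is to reduce the statement to proving that $\varphi$ is surjective, and then exploit supercompleteness of $R$ to upgrade the density from Theorem~\ref{thm:CRAT} to an equality. Combining Theorem~\ref{thm:CRAT} (continuity of $\varphi$ and density of $\varphi(R)$ in $\Pi$) with Lemma~\ref{lemma:cannonical_homo_is_open} (openness of $\varphi$ onto $\varphi(R)$), the induced map $\widehat{\varphi}\colon \quot{R}{\ker \varphi}\to \varphi(R)$ is already a topological isomorphism onto $\varphi(R)$. So everything comes down to showing $\varphi(R)=\Pi$.

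The key step is to apply Lemma~\ref{lemma:Image_Of_Supercomplete_Ring} to the corestriction $\varphi\colon R\to \varphi(R)$, where $\varphi(R)$ is regarded as a topological subring of $\Pi$ with the induced uniformity. This corestriction is surjective by construction, continuous (by Theorem~\ref{thm:CRAT}), and open onto its image (by Lemma~\ref{lemma:cannonical_homo_is_open}); since $R$ is supercomplete, Lemma~\ref{lemma:Image_Of_Supercomplete_Ring} yields that $\varphi(R)$ is supercomplete as well. From this I need the standard fact that every Hausdorff supercomplete uniform space is complete: if $(x_{\alpha})$ is a Cauchy net in such a space, then $(\{x_{\alpha}\})$ is a Cauchy net in the hyperspace (the canonical embedding $x\mapsto\{x\}$ is a uniform embedding), hence converges to some nonempty closed $A$, and the hyperspace entourage condition $A\subseteq \{x_{\alpha}\}+\eps$ forces any $a\in A$ to satisfy $x_{\alpha}\to a$. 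Consequently $\varphi(R)$ is complete in the subspace uniformity it inherits from $\Pi$.

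Once this is in hand, the remainder is routine: a complete uniform subspace of a Hausdorff uniform space is closed, so $\varphi(R)$ is closed in $\Pi$; combined with density from Theorem~\ref{thm:CRAT} this forces $\varphi(R)=\Pi$. Thus $\widehat{\varphi}\colon \quot{R}{\ker\varphi}\to\Pi$ is a continuous, open bijection, hence a topological isomorphism. The only conceptually nontrivial point is the supercomplete$\Rightarrow$complete implication used above; if the author prefers not to invoke it as a black box, it can be established on the spot by the singleton-embedding argument just sketched, and this is where I would place the only real care in writing the proof. Everything else is a straightforward assembly of Theorem~\ref{thm:CRAT}, Lemma~\ref{lemma:cannonical_homo_is_open}, and Lemma~\ref{lemma:Image_Of_Supercomplete_Ring}.
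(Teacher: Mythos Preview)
Your proposal is correct and follows essentially the same route as the paper: use Theorem~\ref{thm:CRAT} and Lemma~\ref{lemma:cannonical_homo_is_open} to see $\varphi$ is continuous, open onto its image, and has dense image, then invoke Lemma~\ref{lemma:Image_Of_Supercomplete_Ring} to conclude $\varphi(R)$ is supercomplete, hence complete, hence closed in $\Pi$, hence all of $\Pi$. The only difference is that the paper treats the implication ``supercomplete $\Rightarrow$ complete'' as immediate (writing ``in particular'') and then cites a reference for ``complete $\Rightarrow$ closed,'' whereas you spell out the singleton-embedding argument; your extra care there is fine and does not change the structure of the proof.
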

	\begin{proof}
		By Lemma~\ref{lemma:cannonical_homo_is_open}, $\varphi$ is open and by Theorem~\ref{thm:CRAT} it is continuous. By Lemma~\ref{lemma:Image_Of_Supercomplete_Ring}, the image of $\varphi$ is supercomplete. In particular, $\varphi(R)$ is a complete subset of $\Pi$ so it is closed in $\Pi$ by~\cite[Theorem~22 on p.~192]{Kelley}. However, $\varphi(R)$ is dense in $\Pi$ by Theorem $\ref{thm:CRAT}$ so $\varphi(R) = \Pi$. Applying~\cite[Theorem~5.11\fullproof on p.~35\finishfullproof]{Warner} like we did in Corollary~\ref{corol:Compact_CRAT} we get the desired result. 
	\end{proof}
	\section{Applications}
	One well--known result proven using CRT is the Lagrange interpolation (\cite[Corollary~9 on p.~366]{Childs}): Let $\F$ be a field and let $x_{0}, \dotsc, x_{n}$ be distinct elements of $\F$. For any $y_{0}, \dotsc, y_{n} \in \F$ there exists a unique polynomial $p(x) \in \F[x]$ of degree less or equal to $n$ such that $p(x_{i}) = y_{i}$ for each $i = 0, \dotsc , n$.
	
	We will now prove a very similar result, for a larger class of topological rings. Namely, topologically simple rings.
	\begin{defi}
		A topological ring $S$ is \emph{topologically simple} if it has no closed ideals other than $S$ itself and $\left\{\, 0 \,\right\}$.
	\end{defi}
	\begin{example}
		Any dense subring of a topologically simple ring is topologically simple~\cite[Proposition~1.4.16\fullproof on p.~58\finishfullproof]{VISTAV}. This means that every dense subring of a field is topologically simple. For example, $\Z[\sqrt{2}]$ is dense in $\R$ and therefore topologically simple.
	\end{example}
	\begin{thm} \label{thm:Lagrange_Interpolation}
		Let $S$ be a topologically simple ring. Let $\left\{\, x_{i} \,\right\}_{i = 1}^{n} \subseteq S$ be distinct points and $\left\{\, y_{i} \,\right\}_{i=1}^{n} \subseteq S$ be some set of values. For every $\eps \in \neighbor{0}$, there exists a polynomial $p \in S[x]$ such that:
		\[p(x_{i}) - y_{i} \in \eps\]
		for any $1 \leq i \leq n$.
	\end{thm}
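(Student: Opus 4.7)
The plan is to mimic the classical Lagrange interpolation construction: I will build, for each $i$, an approximate indicator polynomial $q_i \in S[x]$ satisfying $q_i(x_j) = 0$ for $j \neq i$ and $q_i(x_i) - 1 \in \delta$ for an arbitrary $\delta \in \neighbor{0}$, and set $p(x) := \sum_{i = 1}^n y_i q_i(x)$. Then $p(x_j) = y_j q_j(x_j) \in y_j + y_j \delta$, and by continuity of the maps $s \mapsto y_j s$ one can choose $\delta$ small enough that $y_j \delta \subseteq \eps$ for every $j$, yielding $p(x_j) - y_j \in \eps$ for all $j$.

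The heart of the argument is therefore the following Key Lemma: for distinct $x_1, \dots, x_n \in S$ and any $\eps \in \neighbor{0}$, there exists $q \in S[x]$ with $q(x_j) = 0$ for $j = 1, \dots, n-1$ and $q(x_n) - 1 \in \eps$. I will prove it by induction on $n$. For the base case $n = 2$, set $a := x_2 - x_1 \neq 0$; the two-sided ideal $SaS \unlhd S$ is nonzero, hence dense in $S$ by topological simplicity, so some finite sum $\sum_k u_k a v_k$ lies in $1 + \eps$, and then $q(x) := \sum_k u_k (x - x_1) v_k$ has the required properties.

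For the inductive step, I apply the hypothesis to the $n-1$ points $x_1, \dots, x_{n-2}, x_n$ to obtain $q' \in S[x]$ vanishing at $x_1, \dots, x_{n-2}$ with $q'(x_n)$ arbitrarily close to $1$, then set $q(x) := \sum_l A_l(x) \, q'(x) \, B_l(x)$ with $A_l, B_l$ chosen from $I_{x_{n-1}} := \{p \in S[x] : p(x_{n-1}) = 0\}$; this automatically makes $q$ vanish at $x_1, \dots, x_{n-1}$. The telescoping identity $x^k - x_{n-1}^k = \sum_{j < k} x^j (x - x_{n-1}) x_{n-1}^{k-1-j}$ shows that the image of $I_{x_{n-1}}$ under evaluation at $x_n$ equals $SbS$ for $b := x_n - x_{n-1}$, so $\sum_l A_l(x_n) B_l(x_n)$ ranges over $SbS \cdot SbS = SbSbS$ as the $A_l, B_l$ vary independently. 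Once this sum is close to $1$, take $q'(x_n)$ a sufficiently fine approximation of $1$ to make the perturbation $\sum_l A_l(x_n)(q'(x_n) - 1) B_l(x_n)$ negligible.

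The main obstacle is verifying that $SbSbS$ is nonzero, since then topological simplicity upgrades this nonzero two-sided ideal to a dense one. I would argue via the set $J := \{s \in S : s \cdot SbS = \{0\}\}$: it is a closed two-sided ideal (closedness by continuity of multiplication, the ideal property by a direct check) that does not contain $1$, because $b \in SbS$ is nonzero; topological simplicity therefore forces $J = \{0\}$. Hence $b \notin J$, so $buav$ is nonzero in $S$ for some $u, v \in S$, which forces $bSb \neq \{0\}$, and therefore $SbSbS \supseteq bSb$ is a nonzero, hence dense, two-sided ideal. This closes the induction and completes the proof.
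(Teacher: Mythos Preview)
Your approach differs from the paper's: you build approximate indicator polynomials by an explicit induction, whereas the paper equips $S[x]$ with the pointwise topology, observes that the two--sided ideals $J_i := \langle x - x_i\rangle$ are pairwise TCM (because $x_j - x_i \in J_i + J_j$ and topological simplicity forces $\overline{\langle x_j - x_i\rangle} \ni 1$), and then invokes the finite CRAT (Theorem~\ref{thm:Finite_CRAT}) directly. Your route is self--contained and avoids the CRAT machinery; the paper's is shorter because that machinery absorbs the inductive work you carry out by hand.

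There is, however, a genuine gap once $S$ is not commutative (and the statement does not assume it is). In $S[x]$ the indeterminate $x$ is central, so evaluation at a non--central $s\in S$ is only left $S$--linear, not multiplicative. Already in your base case, $q(x)=\sum_k u_k(x-x_1)v_k$ expands in $S[x]$ to $\big(\sum_k u_k v_k\big)x - \sum_k u_k x_1 v_k$, whence $q(x_1)=\sum_k\big(u_k v_k x_1 - u_k x_1 v_k\big)$, which need not vanish. For the same reason one cannot pass from $q(x)=\sum_l A_l(x)\,q'(x)\,B_l(x)$ to $q(x_j)=\sum_l A_l(x_j)\,q'(x_j)\,B_l(x_j)$, so neither the asserted vanishing of $q$ at $x_1,\dots,x_{n-1}$ nor the identification of $\mathrm{ev}_{x_n}(I_{x_{n-1}})$ with $SbS$ is justified; the latter image is only the left $S$--module spanned by $\{x_n^k - x_{n-1}^k\}_{k\ge 1}$, not a two--sided ideal. (The paper's own equality $\{p : p(x_i)=0\}=\langle x-x_i\rangle$ has exactly the same defect, so the intended setting is evidently commutative $S$, or at least central $x_i$.) Under that hypothesis your argument is correct but heavier than needed: a single factor $q := q'\cdot A$ with $A\in I_{x_{n-1}}$ already suffices, and in fact the Key Lemma follows at once from the observation that a topologically simple commutative ring is an integral domain (otherwise $\mathrm{Ann}(a)$ would be a nonzero proper closed ideal), so $d:=\prod_{j<n}(x_n-x_j)\neq 0$, the ideal $Sd$ is dense, and $q(x):=c\prod_{j<n}(x-x_j)$ with $cd\in 1+\eps$ works without any induction.
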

	\begin{proof}
		Consider $S[x]$ with the pointwise topology. For any $1 \leq i \leq n$, we define:
		\[J_{i} := \left\{\, p \in S[x] \mid p(x_{i}) = 0 \,\right\} = \langle x - x_{i}\rangle.\]
		
		For any $1 \leq i \neq j \leq n$:
		\[
		\overline{J_{i} + J_{j}} =
		\overline{\langle x - x_{i} \rangle + \langle x - x_{j}\rangle} \supseteq
		\overline{\langle (x - x_{i}) - (x - x_{j})\rangle} = 
		\overline{\langle x_{j} - x_{i} \rangle}.
		\]
		Since $S$ it topologically simple and $x_{j} - x_{i} \neq 0$, the ideal $\overline{\langle x_{j} - x_{i} \rangle}$ contains $S$ and in particular, $1 \in \overline{\langle x_{j} - x_{i} \rangle}$. This implies that $\overline{\langle x_{j} - x_{i} \rangle} = S[x]$.
		\fullproof
		Note that the topology of $S$ as a subset of $S[x]$ coincides with that of $S$ making our last argument valid.
		\finishfullproof
		By the CRAT (Theorem~\ref{thm:CRAT}), the map  $\varphi\colon S[x] \to \prod_{i=1}^{n}\quot{S[x]}{J_{i}}$ defined by:
		\[\varphi(p(x)) := (p(x) +  J_{i})_{i = 1}^{n}\]
		is dense onto its image.
		
		Consider the neighborhood
		$U := 
		\prod\limits_{i = 1}^{n} (y_{i} + J_{i} + \eps)$ of the element $(y_{i} + J_{i})_{i = 1}^{n} \in \prod_{i=1}^{n}\quot{S[x]}{J_{i}}$.
		Because $\varphi(S[x])$ is dense in $\prod_{i=1}^{n}\quot{S[x]}{J_{i}}$, there exists a polynomial $p \in S[x]$ such that $\varphi(p) \in U$. Note that $\varphi(p) = (p(x_{i}) + J_{i})_{i = 1}^{n}$ and therefore 
		\[
		p(x_{i}) + J_{i} \subseteq y_{i} + J_{i} + \eps
		\]
		for any $1 \leq i \leq n$.
		It immediately follows that: 
		\[ 
		\forall 1 \leq i \leq n: p(x_{i}) \in y_{i} + \eps.
		\]
		\fullproof
		This is because there exists $q \in J_{i}$ such that 
		\[
		p(x_{i}) + q \in y_{i} + \eps.
		\]
		By definition, $q(x_{i}) = 0$ so $p(x_{i}) \in y_{i} + \eps$.
		\finishfullproof
		This finishes the proof.
	\end{proof}
	We show a new proof for another known interpolation result.
	\begin{thm}~\cite[Theorem~15.13\fullproof on pp.~304--305\finishfullproof]{Rudin}
		Suppose that $\Omega$ is an open region in $\C$ and $\left\{\, z_{n} \,\right\} \subseteq \Omega$ is a sequence of distinct points having no limit point in $\Omega$.
		For any sequence of integers $\left\{\, m_{n} \,\right\}_{n \in \N}$ and numbers $w_{1, n}, \dotsc , w_{m_{n}, n} \in \C$ there exists an analytic function $f$ on $\Omega$ such that:
		\[\forall n \in \N, 1 \leq k \leq m_{n}: f^{(k)}(z_{n}) = k! \ w_{k, n}. \]
	\end{thm}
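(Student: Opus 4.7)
The plan is to derive this directly from the Chinese Remainder Approximation Theorem for supercomplete pseudo--valuated rings (Theorem~\ref{thm:CRAT_For_Supercomplete_Pseudovaluated_Rings}) applied to $R := A(\Omega)$ and to the ideals $I_{n} := \langle z - z_{n}\rangle^{m_{n}+1}$. First I would verify the hypotheses on $R$: Lemma~\ref{lemma:Analytic_Basic_Properties} gives that $R$ is metrizable, complete, and pseudo--valuated, and every complete metric space is supercomplete, so all standing hypotheses of Theorem~\ref{thm:CRAT_For_Supercomplete_Pseudovaluated_Rings} are satisfied.

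Next I would set $\mathcal{I} := \{\, I_{n} \,\}_{n \in \N} \cup \{\, R \,\}$ and check the two conditions on $\mathcal{I}$. Compactness in $\mathcal{L}(R)$ follows directly from Corollary~\ref{corol:Analytic_Zeros_Are_Compact}, since that corollary is stated for an arbitrary sequence of natural exponents and replacing $m_{n}$ by $m_{n}+1$ is harmless. Pairwise TCM follows from pairwise algebraic co--maximality: for distinct indices $n \neq m$, the polynomials $(z - z_{n})^{m_{n}+1}$ and $(z - z_{m})^{m_{m}+1}$ have no common root and hence are coprime in $\C[z]$, so B\'ezout's identity inside $\C[z] \subseteq A(\Omega)$ produces $p, q$ with $p(z)(z - z_{n})^{m_{n}+1} + q(z)(z - z_{m})^{m_{m}+1} = 1$, witnessing $I_{n} + I_{m} = A(\Omega)$; adjoining $R$ itself is trivially compatible.

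With the hypotheses verified, Theorem~\ref{thm:CRAT_For_Supercomplete_Pseudovaluated_Rings} tells us that $\varphi \colon R \to \Pi := \prod_{I \in \mathcal{I}} R/I$ is surjective. To produce the interpolating function, define
\[q_{n}(z) := \sum_{k=1}^{m_{n}} w_{k, n}\,(z - z_{n})^{k} \in \C[z] \subseteq A(\Omega)\]
for every $n \in \N$, and consider the element of $\Pi$ whose $n$-th coordinate is $q_{n} + I_{n}$ (and whose $R$-coordinate is the trivial coset). Surjectivity of $\varphi$ yields $f \in A(\Omega)$ with $f - q_{n} \in I_{n} = \langle z - z_{n}\rangle^{m_{n}+1}$ for every $n$, so $f - q_{n}$ vanishes to order at least $m_{n}+1$ at $z_{n}$. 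Matching Taylor coefficients of $f$ and $q_{n}$ at $z_{n}$ then gives $f^{(k)}(z_{n}) = q_{n}^{(k)}(z_{n}) = k!\,w_{k, n}$ for all $1 \leq k \leq m_{n}$, as required.

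There is no substantial obstacle here beyond choosing the correct family of ideals; the only delicate bookkeeping is ensuring that the exponent $m_{n}+1$ (rather than $m_{n}$) is what captures derivatives up through order $m_{n}$, and that a B\'ezout identity in the polynomial subring is enough to force algebraic co--maximality in the whole ring $A(\Omega)$.
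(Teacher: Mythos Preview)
Your proposal is correct and follows essentially the same route as the paper: apply Theorem~\ref{thm:CRAT_For_Supercomplete_Pseudovaluated_Rings} to $R=A(\Omega)$ with $I_{n}=\langle z-z_{n}\rangle^{m_{n}+1}$ and $\mathcal{I}=\{I_{n}\}\cup\{R\}$, invoke Lemma~\ref{lemma:Analytic_Basic_Properties} for the standing hypotheses and Corollary~\ref{corol:Analytic_Zeros_Are_Compact} for compactness, then read off the derivatives from $f-q_{n}\in I_{n}$. You are in fact slightly more careful than the paper, which silently takes pairwise (topological) co--maximality of the $I_{n}$ for granted, whereas you supply it explicitly via a B\'ezout identity in $\C[z]$.
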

	\begin{proof}
		Consider the complete, metric, pseudo--valuated ring $R = A(\Omega)$ (Lemma~\ref{lemma:Analytic_Basic_Properties}) and the closed ideals $I_{n} = \langle z - z_{n} \rangle^{m_{n} + 1}$. By Corollary~\ref{corol:Analytic_Zeros_Are_Compact}, the family \linebreak $\mathcal{I} := \left\{\, I_{n} \,\right\}_{n \in \N} \cup  \left\{\, R \,\right\}$ is compact. Note that $\quot{R}{R}$ is trivial so we will ignore it in the product. We define the following element of $\Pi$:
		\[\overline{f} = \big(\sum\limits_{k = 1}^{m_{n}} w_{k, n} \ (z - z_{n})^{k} + I_{n}\big)_{n \in \N} \]
		Note that $R$ is metrizable and therefore supercomplete by~\cite[Theorem 48 on p.~30]{Isbell}. Theorem~\ref{thm:CRAT_For_Supercomplete_Pseudovaluated_Rings} implies that there exists an analytic function $f \in R$ such that $\varphi(f) = \overline{f}$. Thus, for all $n \in \N$:
		\[f(z) - \sum\limits_{k = 1}^{m_{n}} w_{k, n} \ (z - z_{n})^{k} \in I_{n} =  \langle z - z_{n}\rangle^{m_{n} + 1}.\]
		In other words, there exists an analytic function $g \in R$ such that:
		\[f(z) = \sum\limits_{k = 1}^{m_{n}} w_{k, n} \ (z - z_{n})^{k} + g(z)(z - z_{n})^{m_{n} + 1}.\]
		Calculating the derivatives of both sides we get:
		\[f^{(k)}(z_{n}) = k! \ w_{k, n}, \]
		for all $n \in \N$ and $1 \leq k \leq m_{n}$.
	\end{proof}
	\appendix
	
	\section{Hyperspace}
	
	\begin{lemma} \label{lemma:Hyperspace_Multipication_Is_Continuous}
		Let $R$ be a topological ring. For every $r \in R$, the function \linebreak $m_{r}: H(R) \rightarrow H(R)$ defined by $m_{r}(A) := \overline{r A}$ is uniformly continuous.
	\end{lemma}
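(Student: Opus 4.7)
The plan is to show uniform continuity directly from the definition of the hyperspace uniformity: given an arbitrary neighborhood $\eps \in \neighbor[R]{0}$, I will exhibit a neighborhood $\delta \in \neighbor[R]{0}$ such that whenever $(A, B) \in H(\delta)$, the pair $(\overline{rA}, \overline{rB})$ lies in $H(\eps)$. Since the condition $H(\eps)$ is symmetric in the two arguments, it will suffice to carry out the argument in one direction and invoke symmetry.

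First, I would choose $\eps' \in \neighbor[R]{0}$ satisfying $\eps' + \eps' \subseteq \eps$, which exists by continuity of addition. Then, using the fact that left multiplication by $r$ is continuous in any topological ring (in particular at $0$), I would select $\delta \in \neighbor[R]{0}$ such that $r\delta \subseteq \eps'$. This pair $(\eps', \delta)$ will be the working data for the proof.

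Suppose now that $(A, B) \in H(\delta)$, so in particular $A \subseteq B + \delta$. Applying the (everywhere-defined) map $x \mapsto rx$ and using distributivity, I obtain $rA \subseteq rB + r\delta \subseteq rB + \eps'$. Passing to closures and invoking the standard fact $\overline{C} \subseteq C + V$ for any $V \in \neighbor[R]{0}$ (already used in the proof of Lemma~\ref{lemma:Join_Is_Continuous} via~\cite[Theorem~3.3]{Warner}), I get
\[\overline{rA} \subseteq \overline{rB + \eps'} \subseteq rB + \eps' + \eps' \subseteq rB + \eps \subseteq \overline{rB} + \eps.\]
The symmetric inclusion $\overline{rB} \subseteq \overline{rA} + \eps$ follows by interchanging the roles of $A$ and $B$, which shows $(\overline{rA}, \overline{rB}) \in H(\eps)$, as desired.

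I do not expect any serious obstacle here: the only subtleties are the two standard moves of closing up the sum via $\overline{C} \subseteq C + V$ and absorbing the closure step into an extra factor of $\eps'$. The key input is the continuity of left multiplication by $r$ in a topological ring, which is exactly what lets us trade the arbitrary neighborhood $\eps$ for a neighborhood $\delta$ whose image under $m_r$ is small.
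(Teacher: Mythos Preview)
Your proposal is correct and follows essentially the same approach as the paper's proof: both choose $\delta$ so that $r\delta$ lies in a ``half'' of $\eps$ (the paper writes $\frac{1}{2}\eps$ where you write $\eps'$ with $\eps'+\eps'\subseteq\eps$), then pass from $A\subseteq B+\delta$ to $\overline{rA}\subseteq\overline{rB}+\eps$ using the same closure trick $\overline{C}\subseteq C+V$ from~\cite[Theorem~3.3]{Warner}.
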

	\begin{proof}
		Let $\eps$ be a neighborhood of zero in $R$. We need to find $\delta$ such that if $(A,B)  \in H(\delta)$ then $(m_{r} (A), m_{r} (B)) \in H(\eps)$. We choose $\delta$ such that $r \delta \subseteq \frac{1}{2}\eps$.
		\fullproof
		(it is possible since multiplication is continuous).
		\finishfullproof
		Now, if $(A,B)  \in H(\delta)$, then by definition $A \subseteq B + \delta$ and $B \subseteq A + \delta$. Multiplying both sides by $r$ and taking the closure yields:
		\ben
			\item $m_{r} (A) = \overline{rA} \subseteq \overline{rB + r\delta} \subseteq \overline{rB + \frac{1}{2}\eps} \subseteq \overline{rB} + \eps = m_{r} (B) + \eps$.
			\item $m_{r} (B) = \overline{rB} \subseteq \overline{rA + r\delta} \subseteq \overline{rA + \frac{1}{2}\eps} \subseteq \overline{rA} + \eps = m_{r} (A) + \eps$.
		\een
		Note that we could add $\frac{1}{2}\eps$ and ignore the closure operation (like we did in Lemma~\ref{lemma:Join_Is_Continuous}) because $\overline{A} = \bigcap \left\{\, A + U \mid U \in N(0) \,\right\}$ (\cite[Theorem~3.3\fullproof on p.~20\finishfullproof]{Warner}).
		Therefore, $(m_{r} (A), m_{r} (B)) \in H(\eps)$, as required.
	\end{proof}
	\begin{lemma} \label{lemma:Modules_Are_Closed}
		If $R$ is a topological ring, then
		$\mathcal{L}(R)$ is a closed subset of the hyperspace of $R$.
	\end{lemma}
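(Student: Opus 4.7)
The plan is to show directly that any hyperspace limit of closed two--sided ideals is itself a closed two--sided ideal. Suppose a net $\{I_\alpha\}$ in $\mathcal{L}(R)$ converges to some $M \in H(R)$. By definition of $H(R)$, the set $M$ is nonempty and closed in $R$, so it remains to verify that $M$ is closed under addition and negation and that $rm, mr \in M$ for every $r \in R$ and every $m \in M$.

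For the additive properties, the basic input is the double approximation $I_\alpha \subseteq M + \eps$ and $M \subseteq I_\alpha + \eps$ available for sufficiently large $\alpha$ and any $\eps \in \neighbor{0}$. Taking $a, b \in M$ and choosing $\delta$ with $\delta + \delta + \delta \subseteq \eps$ and $\alpha$ large enough that both inclusions hold with $\delta$ in place of $\eps$, I would write $a = i + d_{1}, b = j + d_{2}$ with $i, j \in I_{\alpha}$ and $d_{1}, d_{2} \in \delta$; then $i + j \in I_{\alpha} \subseteq M + \delta$ yields $a + b \in M + \delta + \delta + \delta \subseteq M + \eps$. Closedness of $M$, together with the identity $\overline{M} = \bigcap_{\eps \in \neighbor{0}} (M + \eps)$ already invoked in the paper (the Warner Theorem~3.3 citation), then gives $a + b \in M$. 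The inclusion $0 \in M$ follows along the same lines since $0 \in I_{\alpha}$ for every $\alpha$, and $-a \in M$ is even easier.

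For the absorption property, fix $r \in R$ and $a \in M$ and, given $\eps$, use continuity of left multiplication by $r$ to pick $\delta$ with $r \delta \subseteq \frac{1}{2} \eps$. Choosing $\alpha$ with $M \subseteq I_{\alpha} + \delta$ and $I_{\alpha} \subseteq M + \frac{1}{2}\eps$, and writing $a = i + d$ as above, I obtain $ra = ri + rd \in I_{\alpha} + \frac{1}{2}\eps \subseteq M + \eps$, whence $ra \in M$ by closedness of $M$. The computation for $ar$ is symmetric and requires no new ideas.

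No real obstacle is present; the argument is a routine chase through the definition of hyperspace convergence, the only bookkeeping being to choose the auxiliary neighborhoods $\delta$ whose multiples fit inside a prescribed $\eps$. A more structural alternative would exhibit $\mathcal{L}(R)$ as an intersection of closed equalizers built from Lemmas~\ref{lemma:Join_Is_Continuous} and~\ref{lemma:Hyperspace_Multipication_Is_Continuous} (namely $\{A : A \vee A = A\}$, $\{A : -A = A\}$, and $\{A : \overline{rA} \vee A = A = \overline{Ar} \vee A\}$ for each $r \in R$), but the direct verification seems the cleanest way to write it.
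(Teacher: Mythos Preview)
Your direct $\eps$--$\delta$ argument is correct: each of the closure--under--addition, negation, zero, and absorption checks goes through exactly as you outline, the only care needed being symmetry of the auxiliary $\delta$ for the negation step and the choice of a single $\alpha$ witnessing both inclusions simultaneously.

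The paper, however, takes precisely the structural route you mention only as an afterthought. It invokes Lemma~\ref{lemma:Join_Is_Continuous} to write $I + I \subseteq I \vee I = \lim (I_{\alpha} \vee I_{\alpha}) = \lim I_{\alpha} = I$, and then Lemma~\ref{lemma:Hyperspace_Multipication_Is_Continuous} to get $rI \subseteq \overline{rI} = \lim \overline{rI_{\alpha}} \subseteq \lim I_{\alpha} = I$ (and symmetrically on the right). So what you dismissed as ``a more structural alternative'' is in fact the paper's chosen proof, and your direct verification is the genuinely different approach. Your method is self--contained and avoids citing the two continuity lemmas, at the price of repeating in miniature the neighbourhood bookkeeping that those lemmas already encapsulate; the paper's argument is shorter on the page and exhibits $\mathcal{L}(R)$ as closed precisely because the relevant hyperspace operations are continuous, which is conceptually cleaner and reuses work already done.
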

	
	\begin{proof}
		Let $\left\{\, I_{\alpha} \,\right\}_{\alpha \in A}$ be a converging net of closed two--sided ideals in the hyperspace. We need to prove that its limit is also a two--sided ideal. Write $I = \lim I_{\alpha}$. By Lemma~\ref{lemma:Join_Is_Continuous}:
		\[I + I \subseteq I \vee I = \lim (I_{\alpha} \vee I_{\alpha}) = \lim \overline{I_{\alpha} + I_{\alpha}}
		= \lim I_{\alpha} = I,\]
		meaning that $I$ is a subgroup. By Lemma~\ref{lemma:Hyperspace_Multipication_Is_Continuous}, for any $r \in R$:
		\[rI \subseteq \overline{rI} = \lim \overline{ r I_{\alpha}} \subseteq \lim \overline{I_{\alpha}} = \lim I_{\alpha} = I.\]
		Thus, $I$ is a left ideal. A similar argument shows that $I$ is also a right ideal.
	\end{proof}
	\begin{defi}\cite[p.~29]{Isbell}
		Let $X$ be a topological space and $\left\{\, Y_{\alpha} \,\right\}_{\alpha \in A}$ be a net of subsets of $X$. A point $x \in X$ is said to be a \emph{cluster point} of $\left\{\, Y_{\alpha} \,\right\}_{\alpha \in A}$ if any neighborhood $U \in \neighbor{x}$ meets $\left\{\, Y_{\alpha} \,\right\}_{\alpha \in A}$ in a cofinal set of indexes.
	\end{defi}
	\begin{lemma} \label{lemma:Hyperspace_Limit_Charecerization}
		~\cite[Proposition~45 on p.~29]{Isbell}
		Let $\mu X$ be a uniform space, we consider its hyperspace $H(X)$ with the hyperspace uniformity. 
		Let $\left\{\, A_{\alpha} \,\right\}_{\alpha \in A}$ be a net in $H(X)$. If $\left\{\, A_{\alpha} \,\right\}_{\alpha \in A}$ converges, it converges to its set of cluster points.
	\end{lemma}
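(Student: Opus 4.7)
The plan is to write $A := \lim_{\alpha} A_{\alpha}$ and $C$ for the set of cluster points of the net $\{A_{\alpha}\}$, then prove $A = C$ by two inclusions. The fundamental tool is that convergence in the hyperspace uniformity means: for every entourage $U$, eventually $(A_{\alpha}, A) \in H(U)$, which unpacks to both $A \subseteq U[A_{\alpha}]$ and $A_{\alpha} \subseteq U[A]$ simultaneously.

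For the inclusion $A \subseteq C$, I would fix $a \in A$ and a symmetric entourage $U$, then invoke the tail condition $A \subseteq U[A_{\alpha}]$ to produce, for each $\alpha$ in that tail, some $b_{\alpha} \in A_{\alpha}$ with $(a, b_{\alpha}) \in U$. Hence $A_{\alpha} \cap U[a] \neq \emptyset$ eventually, and in particular cofinally. Since the sets $U[a]$ form a neighborhood base of $a$ as $U$ ranges over the uniformity, this says precisely that $a$ is a cluster point of the net, so $a \in C$.

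For the reverse inclusion $C \subseteq A$, I would use that $A$ is closed (as an element of $H(X)$), so it suffices to check that every cluster point $x$ lies in the closure of $A$, i.e., that $V[x] \cap A \neq \emptyset$ for every entourage $V$. Given such a $V$, pick a symmetric entourage $W$ with $W \circ W \subseteq V$. By convergence, there is an index $\alpha_{1}$ past which $A_{\alpha} \subseteq W[A]$. By the cluster point property applied to the neighborhood $W[x]$, the set $\{\alpha : A_{\alpha} \cap W[x] \neq \emptyset\}$ is cofinal, hence meets the tail $\{\alpha \geq \alpha_{1}\}$ at some $\alpha_{0}$. Choose $y \in A_{\alpha_{0}} \cap W[x]$ and, using $A_{\alpha_{0}} \subseteq W[A]$, an element $a \in A$ with $(a, y) \in W$. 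By symmetry of $W$ we get $(x, a) \in W \circ W \subseteq V$, so $a \in V[x] \cap A$, as required.

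The main obstacle is purely bookkeeping rather than mathematical: one must juggle the distinction between \emph{eventually} (the convergence condition) and \emph{cofinally} (the cluster point condition) on the same directed set, and be disciplined about symmetry of entourages when chaining two approximations through a common intermediate point in $A_{\alpha_{0}}$. Once the symmetric halving $W \circ W \subseteq V$ is in place, both inclusions reduce to direct unpacking of the definitions of $H(U)$ and of cluster points.
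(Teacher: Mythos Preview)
Your proof is correct. The paper itself does not supply a proof of this lemma; it simply cites Isbell's \emph{Uniform Spaces}, Proposition~45, and moves on. So there is nothing to compare against at the level of argument structure.

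As a substantive comment: your two inclusions are exactly the standard way to establish this, and the bookkeeping you flag (eventual versus cofinal, symmetric halving $W \circ W \subseteq V$) is handled correctly. One minor phrasing point in the $A \subseteq C$ direction: you fix a symmetric $U$, show $A_{\alpha} \cap U[a] \neq \emptyset$ eventually, and then say ``since the sets $U[a]$ form a neighborhood base of $a$ \ldots\ this says precisely that $a$ is a cluster point.'' Strictly, the cluster point definition requires checking \emph{every} neighborhood $V$ of $a$; the way your argument actually delivers this is that, given such a $V$, you first choose a symmetric $U$ with $U[a] \subseteq V$ and \emph{then} run the tail argument for that $U$. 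The logic is fine, but the order of quantifiers could be stated more explicitly.
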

	
	\begin{lemma} \label{lemma:Hyperspace_Limit_Of_Monotone_Sequence}
		Let $R$ be a topological ring and consider its hyperspace $H(R)$ with the induced uniformity. 
		Suppose that $\left\{\, S_{\alpha} \,\right\}_{\alpha \in A}$ is a descending net in $H(R)$. If $\left\{\, S_{\alpha} \,\right\}_{\alpha \in A}$ converges, it converges to $\bigcap \limits_{\alpha \in A} S_{\alpha}$.
	\end{lemma}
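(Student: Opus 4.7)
The plan is to invoke Lemma~\ref{lemma:Hyperspace_Limit_Charecerization}, which tells us that a converging net in the hyperspace converges to its set of cluster points. Thus it suffices to identify the cluster points of the descending net $\{\, S_{\alpha} \,\}_{\alpha \in A}$ and show they are exactly the elements of $\bigcap_{\alpha \in A} S_{\alpha}$.

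First I would unfold the definition: $x \in R$ is a cluster point of the net iff every neighborhood $U \in \neighbor{x}$ satisfies $U \cap S_{\alpha} \neq \emptyset$ for a cofinal set of indices $\alpha$. The key observation is then that because the net is descending ($\alpha \leq \beta$ implies $S_{\alpha} \supseteq S_{\beta}$), cofinality upgrades to universality: if $U \cap S_{\beta} \neq \emptyset$ for cofinally many $\beta$, then for an arbitrary $\alpha \in A$ we may choose $\beta \geq \alpha$ with $U \cap S_{\beta} \neq \emptyset$, and since $S_{\beta} \subseteq S_{\alpha}$ we also get $U \cap S_{\alpha} \neq \emptyset$. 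Hence $x$ is a cluster point of the net iff $U \cap S_{\alpha} \neq \emptyset$ for every neighborhood $U$ of $x$ and every $\alpha \in A$, i.e., iff $x \in \overline{S_{\alpha}}$ for all $\alpha$.

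Since each $S_{\alpha}$ is a member of the hyperspace $H(R)$, it is closed, so $\overline{S_{\alpha}} = S_{\alpha}$. Consequently the set of cluster points equals $\bigcap_{\alpha \in A} S_{\alpha}$, and Lemma~\ref{lemma:Hyperspace_Limit_Charecerization} finishes the argument. There is no real obstacle: the only subtlety is the descending-to-universal upgrade, and an implicit remark that the convergence hypothesis guarantees the intersection is nonempty (otherwise it would not be a legitimate element of $H(R)$).
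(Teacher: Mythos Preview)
Your argument is correct and, like the paper, hinges on Lemma~\ref{lemma:Hyperspace_Limit_Charecerization}. The one direction (showing $\bigcap_{\alpha} S_{\alpha}$ is contained in the set $C$ of cluster points) is identical in both. For the reverse inclusion, however, you and the paper diverge. The paper uses the convergence hypothesis directly: knowing that $\{S_{\alpha}\}$ converges to $C$, for each $\eps \in \neighbor{0}$ it picks $\alpha_{\eps}$ with $C \subseteq S_{\beta} + \eps$ for all $\beta \geq \alpha_{\eps}$, then for a given $\alpha$ takes an upper bound $\tilde{\alpha}$ of $\alpha$ and $\alpha_{\eps}$ and uses monotonicity to get $C \subseteq S_{\tilde{\alpha}} + \eps \subseteq S_{\alpha} + \eps$; since this holds for all $\eps$ and $S_{\alpha}$ is closed, $C \subseteq S_{\alpha}$. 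Your route is more elementary: you work purely from the definition of cluster point and observe that, for a descending net, ``cofinally many $\alpha$'' upgrades to ``all $\alpha$'', so any cluster point lies in $\overline{S_{\alpha}} = S_{\alpha}$ for every $\alpha$. Your argument thus identifies the cluster-point set as $\bigcap_{\alpha} S_{\alpha}$ without invoking convergence at all for this direction, which is a small but genuine simplification; the paper's version trades that for a more hyperspace-flavored computation with $\eps$-fattenings.
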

	\begin{proof} 
		By Lemma~\ref{lemma:Hyperspace_Limit_Charecerization}, we need to show that the set of cluster points of $\left\{\, S_{\alpha} \,\right\}_{\alpha \in A}$ coincides with the intersection. First we show that the intersection is contained in the set of cluster points $C$. 
		Suppose $x \in \bigcap \limits_{\alpha \in A} \left\{\, S_{\alpha} \,\right\}$. 
		For every $U \in \neighbor{x}$ and $\alpha \in A$,  $x \in U \cap S_{\alpha}$, so the set $\{\alpha \in A \mid U \cap S_{\alpha} \neq \phi\} = A$ is cofinal in $A$. By definition, $x \in C$.
		
		Now we will prove that $C$ is contained in each $S_{\alpha}$. Let $\alpha \in A$. Since $\left\{\, S_{\alpha} \,\right\}_{\alpha \in A}$ converges to $C$, for every neighborhood of zero $\eps$ there exists $\alpha_{\eps}$ such that for any 
		$\beta \geq \alpha_{\eps}: C \subseteq S_{\beta} + \eps$. The set $A$ is directed, hence there exists an upper bound for $\alpha$ and $\alpha_{\eps}$ (call it $\tilde{\alpha}$). The net is descending, so
		\[C \subseteq S_{\tilde{\alpha}} + \eps \subseteq S_{\alpha} + \eps. \]
		This is true for any $\eps$. As in Lemma~\ref{lemma:Join_Is_Continuous}, we apply~\cite[Theorem~3.3\fullproof on p.~20\finishfullproof]{Warner}, namely that $\overline{S_{\alpha}} = \bigcap \left\{\, S_{\alpha} + U \mid U \in \neighbor{0} \,\right\}$, to conclude that: $C \subseteq \overline{S_{\alpha}} = S_{\alpha}$.
		This is true for all $S_{\alpha}$ and therefore $C \subseteq \bigcap \limits_{\alpha \in A} S_{\alpha}$ which completes the proof.
	\end{proof}
	\section{Two properties of $\C$}
	\begin{lemma} \label{lemma:Compact_Set_Extension}
		Let $\Omega \subseteq \C$ be an open set and let $K \subseteq \Omega$ be a compact subset. There exists a compact subset $\hat{K}$ containing $K$ and faithful to $\Omega$.
	\end{lemma}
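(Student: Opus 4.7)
The plan is to form the standard holomorphically convex hull of $K$ relative to $\Omega$. Concretely, let $\{U_\lambda\}_{\lambda \in \Lambda}$ denote the connected components of $\widehat{\C}\setminus K$, split them into the \emph{good} ones (those meeting $\widehat{\C}\setminus\Omega$) and the \emph{bad} ones (those disjoint from $\widehat{\C}\setminus\Omega$, hence wholly contained in $\Omega$), and define
\[
\widehat{K} := K \;\cup\; \bigcup\{\,U_\lambda \mid U_\lambda \text{ is bad}\,\}.
\]
Note that the unique component containing $\infty$ is automatically good, since $\infty \in \widehat{\C}\setminus\Omega$ (because $\Omega\subseteq\C$); so every bad $U_\lambda$ is a bounded subset of $\C$ already contained in $\Omega$.

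Next I would verify that $\widehat{K}$ has the claimed properties. Inclusion $\widehat{K}\subseteq\Omega$ is immediate from the construction. For compactness, observe that $\widehat{\C}$ is locally connected, so each $U_\lambda$ is open in $\widehat{\C}$; then $\widehat{\C}\setminus\widehat{K}$ is the union of the good $U_\lambda$'s and is open, making $\widehat{K}$ closed in $\widehat{\C}$. Since $\infty$ lies in the (good) unbounded component, $\infty\notin\widehat{K}$, so $\widehat{K}$ is a closed subset of the compact space $\widehat{\C}$ which avoids $\infty$; together with boundedness in $\C$ this yields compactness.

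The last and most delicate step is to identify the connected components of $\widehat{\C}\setminus\widehat{K}$ with precisely the good $U_\lambda$'s, thereby establishing faithfulness. For the forward direction, each good $U_\lambda$ is connected and sits in $\widehat{\C}\setminus\widehat{K}$; any connected superset $V\supseteq U_\lambda$ inside $\widehat{\C}\setminus\widehat{K}\subseteq\widehat{\C}\setminus K$ is contained in a component of $\widehat{\C}\setminus K$, which must be $U_\lambda$ itself by maximality, so $V=U_\lambda$. For the reverse direction, any component $W$ of $\widehat{\C}\setminus\widehat{K}$ lies in $\widehat{\C}\setminus K$, hence in some $U_\lambda$; this $U_\lambda$ cannot be bad (those lie in $\widehat{K}$), so it is good and equals $W$ by maximality. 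Since every good $U_\lambda$ meets $\widehat{\C}\setminus\Omega$ by definition, $\widehat{K}$ is faithful to $\Omega$.

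The main obstacle I anticipate is this final identification: one has to be careful that enlarging $K$ to $\widehat{K}$ does not fragment a good component of $\widehat{\C}\setminus K$ into smaller pieces whose faithfulness would then need to be rechecked. The local connectedness of $\widehat{\C}$ and the maximality argument above are exactly what rule this out.
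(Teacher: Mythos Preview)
Your proposal is correct and follows essentially the same construction as the paper: define $\widehat{K}$ as $K$ together with the ``bad'' components of $\widehat{\C}\setminus K$, use local connectedness of $\widehat{\C}$ to get openness of the components, and conclude that $\widehat{\C}\setminus\widehat{K}$ is the union of the ``good'' components. The only stylistic difference is that where you give a careful two-sided maximality argument to identify $\conn(\widehat{\C}\setminus\widehat{K})$ with the good $U_\lambda$'s, the paper simply observes that a disjoint union of open connected sets has exactly those sets as its connected components; your version is more explicit but not a different approach.
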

	\begin{proof}		
		We define
		\[U := \bigcup \left\{\, C \in \conn(\hat{\C} \setminus K)\mid C \cap (\hat{\C} \setminus \Omega) = \phi \,\right\}
		= \bigcup \left\{\, C \in \conn(\hat{\C} \setminus K)\mid C \subseteq \Omega  \,\right\}.\]
		Then $U \subseteq \Omega$. We define $\hat{K} := K \cup U \subseteq \Omega$. 
		
		The Riemann sphere is locally connected, so $\hat{\C} \setminus K$ is also locally connected as an open subset. By~\cite[Corollary~27.10\fullproof on p.~200\finishfullproof]{Willard}, the connected components of a locally connected space are open.
		Note that:
		\[\hat{\C} \setminus \hat{K} =\bigcup \left\{\, C \in \conn(\hat{\C} \setminus K)\mid C \cap (\hat{\C} \setminus \Omega) \neq \phi \,\right\}.\]
		This is a disjoint union of open connected components and therefore:
		\[\conn(\hat{\C} \setminus \hat{K})= \left\{\, C \in \conn(\hat{\C} \setminus K)\mid C \cap (\hat{\C} \setminus \Omega) \neq \phi  \,\right\}. \]
		This means that
		\[\forall C \in \conn(\hat{\C} \setminus \hat{K}): C \cap (\hat{\C} \setminus \Omega) \neq \phi. \]
		By definition, $\hat{K}$ is faithful to $\Omega$.
		All that is left is to prove that $\hat{K}$ is compact.
		
		Notice that
		\[\hat{\C} \setminus \hat{K}=\bigcup \left\{\, C \in \conn(\hat{\C} \setminus K)\mid C \nsubseteq \Omega \,\right\}\]
		is a union of open sets and is therefore open. Thus, $\hat{K}$ is compact as a closed subspace of the Riemann sphere.
	\end{proof}
	
	\begin{lemma} \label{lemma:Compact_Coverage_Of_Open_Set}
		Let $\Omega$ be an open subset of $\C$. There exists an ascending sequence $\left\{\, K_{n} \,\right\}_{n = 1}^{\infty}$ of compact subsets of $\Omega$ such that any compact $K \subseteq \Omega$ is contained in some $K_{n}$.
	\end{lemma}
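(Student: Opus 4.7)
The plan is to use the standard exhaustion by compact sets, defined by combining a boundedness condition with a positive distance from the complement. Specifically, I would set
\[
K_{n} := \left\{\, z \in \C \mid |z| \leq n \text{ and } \d(z, \C \setminus \Omega) \geq \tfrac{1}{n} \,\right\},
\]
with the convention that $\d(z, \emptyset) = +\infty$ (so the second condition is vacuous when $\Omega = \C$). Here $\d$ denotes the usual Euclidean distance from a point to a set.

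The first step is to verify that each $K_{n}$ is a compact subset of $\Omega$. Each $K_{n}$ is the intersection of the closed disk of radius $n$ with the closed set $\{\, z \in \C \mid \d(z, \C \setminus \Omega) \geq \tfrac{1}{n} \,\}$ (closedness of the latter follows from continuity of the distance function), hence $K_{n}$ is closed and bounded, thus compact. The inclusion $K_{n} \subseteq \Omega$ is immediate since $\d(z, \C \setminus \Omega) > 0$ forces $z \notin \C \setminus \Omega$ (when $\Omega \neq \C$, and trivially otherwise). The ascending property $K_{n} \subseteq K_{n+1}$ is clear because both defining inequalities weaken as $n$ grows.

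The second step is to show that an arbitrary compact $K \subseteq \Omega$ is contained in some $K_{n}$. Since $K$ is bounded in $\C$, there exists $M > 0$ with $|z| \leq M$ for all $z \in K$. Moreover, the function $z \mapsto \d(z, \C \setminus \Omega)$ is continuous on $\C$ and strictly positive on $K$ (since $K \subseteq \Omega$ and $\C \setminus \Omega$ is closed), so by compactness of $K$ it attains a positive minimum $\delta > 0$. Choosing any integer $n \geq \max(M, 1/\delta)$ yields $K \subseteq K_{n}$.

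No step of this argument is genuinely hard; the only mild subtlety is handling the case $\Omega = \C$ uniformly with the general case, which is resolved cleanly by the convention $\d(\cdot, \emptyset) = +\infty$ (or, equivalently, by taking $K_{n} = \{|z| \leq n\}$ in that case).
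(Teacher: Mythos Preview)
Your proof is correct and follows essentially the same approach as the paper: the same exhaustion $K_{n} = \overline{B(n)} \cap \{\, z \mid \d(z,\C\setminus\Omega) \geq \tfrac{1}{n}\,\}$, the same Heine--Borel argument for compactness, and the same use of a bound $M$ and a positive minimum distance $\delta$ to absorb an arbitrary compact $K$ into some $K_{n}$. You are slightly more explicit than the paper in verifying $K_{n}\subseteq\Omega$, the ascending property, and the degenerate case $\Omega=\C$, but the argument is otherwise identical.
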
 
	\begin{proof}
		For any $n \in \N$ we define:
		\[ K_{n} := \overline{B(n)} \cap \left\{\, z \in \C \mid d(z, \C \setminus \Omega) \geq \frac{1}{n} \,\right\}, \]
		where $B(n)$ is the open ball with radius $n$ around $0$ in $\C$.
		Note that $K_{n}$ is clearly closed in $\C$ for all $n \in \N$.
		Because $K_{n} \subseteq \overline{B(n)}$, it is also bounded and therefore compact.
		Suppose that $K \subseteq \Omega$ is any compact set. There exists some natural numbers $n_{1}, n_{2} \in \N$ such that:
		\ben
			\item $K \subseteq \overline{B(n_{1})}$.
			\item $\frac{1}{n_{2}} \leq d(K, \C \setminus \Omega)$.
		\een
		Define $n := \max\left\{\, n_{1}, n_{2} \,\right\}$. Clearly, $K \subseteq K_{n}$ as required.
		\fullproof
		The key here is to notice that $f(z) := d(z, \C \setminus \Omega)$ is continuous so it has a minimum value on $K$. By definition, this value is also equals to $d(K, \C \setminus \Omega)$. If it were zero, there would have been a point $z \in K \subseteq \Omega$ such that $d(z, \C \setminus \Omega) = 0$. Because $\Omega$ is open, this is not possible. Thus, $d(K, \C \setminus \Omega) > 0$.
		\finishfullproof
	\end{proof}

\end{document}